    	\definecolor{AR}{rgb}{0.64, 0.0, 0.0}
\newcommand{\dott}{\, \cdot\,}
\DeclareRobustCommand\full  {\tikz[baseline=-0.6ex]\draw[thick] (0,0)--(0.5,0);}
\DeclareRobustCommand\dotted{\tikz[baseline=-0.6ex]\draw[thick,dotted] (0,0)--(0.54,0);}
\DeclareRobustCommand\dashed{\tikz[baseline=-0.6ex]\draw[thick,dashed] (0,0)--(0.54,0);}
\DeclareRobustCommand\chainn{\tikz[baseline=-0.6ex]\draw[thick,dash dot] (0,0)--(0.5,0);}
\newcommand{\nn}{\nonumber}
\newcommand{\D}{\Delta}
\newcommand{\BV}{BV}
\definecolor{darkgreen}{rgb}{0.0, 0.2, 0.13}
\newcommand{\norma}[1]{{\left\|#1\right\|}}
\g@addto@macro\normalsize{%
  \setlength\abovedisplayskip{4pt}
  \setlength\belowdisplayskip{4pt}
  \setlength\abovedisplayshortskip{4pt}
  \setlength\belowdisplayshortskip{4pt}
}
\numberwithin{equation}{section}
\crefname{section}{Section}{Sections}
\crefname{subsection}{Subsection}{Subsections}
\crefname{condition}{Condition}{Conditions}
\crefname{hypothesis}{Hypothesis}{Conditions}
\crefname{assumption}{Assumption}{Assumptions}
\crefname{lemma}{Lemma}{Lemmas}
\crefname{definition}{Definition}{Definitions}
\numberwithin{equation}{section}
\newtheorem{theorem} {Theorem}[section]
\newtheorem{lemma}{Lemma}[section]
\newtheorem{counter example}{Counter Example}[section]
\newtheorem{remark} {Remark}[section]
\newtheorem{definition} {Definition}[section]
\def\N{\mathbb{N}}
\def\CC{{\rm \kern.24em \vrule width.02em height1.4ex depth-.05ex \kern-.26emC}}
\def\TagOnRight
\def\AA{{it I} \hskip-3pt{\tt A}}
\def\QQ{\rlap {\raise 0.4ex \hbox{$\scriptscriptstyle |$}} {\hskip -0.1em Q}}
\newcommand{\vo}{\vec{o}\@ifnextchar{^}{\,}{}}
\def\YYint#1#2#3{{\setbox0=\hbox{$#1{#2#3}{\iint}$}
    \vcenter{\hbox{$#2#3$}}\kern-.50\wd0}}
\def\XXint#1#2#3{{\setbox0=\hbox{$#1{#2#3}{\int}$}
    \vcenter{\hbox{$#2#3$}}\kern-.50\wd0}}
\def\namedlabel#1#2{\begingroup
   \def\@currentlabel{#2}%
   \label{#1}\endgroup
}
\newcommand{\rmh}[1]{\mathpalette{\raisem@th{#1}}}
\newcommand{\raisem@th}[3]{\hspace*{-1pt}\raisebox{#1}{$#2#3$}}
\newcommand{\descref}[2]{\hyperref[#1]{\textnormal{\textcolor{black}{(}\textcolor{black}{\bf #2}\textcolor{black}{)}}}}
\newcommand{\dref}[2]{\hyperref[#1]{\textcolor{black}{(}\textcolor{black}{\bf #2}\textcolor{black}{)}}}
\newcommand{\be} {\begin{align}}
\newcommand{\ee} {\end{align}}
\newcommand{\Bea} {\begin{align*}}
\newcommand{\Eea} {\end{align*}}
\newcommand{\la} {\lambda}
\newcommand{\R}{\mathbb{R}}
\newcommand{\sgn}{\mathop\mathrm{sgn}}
\renewcommand{\L}[1]{\mathbf{L^#1}}
\newcommand{\Z}{\mathbb{Z}}
\DeclareMathOperator{\loc}{loc}
\DeclareMathOperator{\lipR}{Lip(\R)}
\DeclareMathOperator{\lip}{Lip}
\newcommand{\abs}[1]{\left| #1\right|}
\newcounter{whitney}
\newcounter{ineqcounter}
\def\ps@pprintTitle{%
\let\@oddhead\@empty
\let\@evenhead\@empty
\def\@oddfoot{}%
\let\@evenfoot\@oddfoot}
\renewcommand{\epsilon}{\varepsilon}
\renewcommand{\L}[1]{L^#1}
\newcommand{\modulo}[1]{{\left|#1\right|}}
\newcommand{\interi}{{\mathbb{Z}}}
\renewcommand{\d}[1]{\mathinner{\mathrm{d}{#1}}}
\begin{document}
\begin{frontmatter}

\title{Convergence of {the numerical  approximations} and well-posedness:
Nonlocal conservation laws with rough flux}

\author[myaddress1]{Aekta Aggarwal}
\ead{aektaaggarwal@iimidr.ac.in}

\address[myaddress1]{Operations Management and Quantitative Techniques, Indian Institute of Management,\\Prabandh Shikhar, Rau--Pithampur Road, Indore, Madhya Pradesh 453556, India}

 \address[myaddress2]
 {Department of Mathematical Sciences, 
Norwegian University of Science and Technology,\\ NO–7491 Trondheim, Norway.}

\author[myaddress2]{Ganesh Vaidya}
\ead{ganesh.k.vaidya@ntnu.no}

\begin{abstract}
{We study} a class of { nonlinear} nonlocal conservation laws with discontinuous flux, modeling crowd dynamics and traffic flow, without any additional conditions on finiteness/discreteness of the set of
discontinuities or on the monotonicity of the kernel/the discontinuous coefficient. Strong compactness of the Godunov and Lax-Friedrichs type approximation{s} is proved, providing the existence of entropy solutions.
A proof of {the} uniqueness of the adapted entropy solution{s} is provided, establishing the convergence of the entire sequence of finite volume approximations to the adapted entropy solution. As per the current literature, this is the first well-posedness result for { the aforesaid class} and
connects the theory of nonlocal conservation laws (with discontinuous flux), with its local counterpart in a {generic} setup. Some numerical examples are presented to
display the performance of the schemes and explore the {l}imiting behavior of {these} nonlocal conservation law{s} to {their} local counterpart{s}.
\end{abstract}

\begin{keyword}
	discontinuous flux \sep adapted entropy \sep nonlocal conservation laws \sep traffic flow	
 	\medskip
\MSC[{2020}] 35L65, 35B44, 35A01, 65M06, 65M08
 	\end{keyword}

\end{frontmatter}

\section{Introduction}

 One of the most widely used macroscopic models for traffic flow, due to its simplicity and rich analytical theory, is the
Lighthill-Whitham-Richards (LWR) model, given by \begin{eqnarray}\label{LWR}
u_t+(u\nu(u))_x=0,
\end{eqnarray} where $u$ is the average traffic density and $\nu$ is  the average traffic speed. However, the governing equation { follows} a { nonlinear} scalar hyperbolic conservation law, the solutions may develop discontinuities, resulting in infinite accelerations, {and thereby} limiting the model's capability to capture some important physical traffic phenomena. Over recent years, attention has thus shifted to conservation laws with \textit{nonlocal} parts in the flux, where a convolution is introduced in the flux to produce Lipschitz-continuous speeds in $x$ and $t$ ensuring
bounded accelerations. 

One of the ways to do this (see \cite{ACG2015,AG2016,BG2016,CGL2012,KP2017, KPS2018}), is to evaluate the  traffic speed from the average traffic density in the following way: \begin{eqnarray}
u_t+\left(u\nu(\mu*u)\right)_x&=&0,\label{134}
\end{eqnarray}where $\nu,
\mu \in (C^2 \cap W^{2,\infty}) (\R),$ with \[(\mu *\; u) (t,x)=
  \displaystyle\displaystyle \int\limits_{\R}\mu (x-\xi) \; u(t,\xi) \, \d\xi,\]
 where the mean traffic speed  resulted from the evaluation of the average traffic density.
Another natural nonlocal averaging (see \cite{FGKP2022, FKG2018}) is to average over the velocity resulting in the following  conservation law:
\begin{align}
u_t+\left(u(\mu*\nu(u))\right)_x&=0.\label{135}
\end{align}

It thus {is} interesting to consider the general class of such PDEs, namely
\begin{eqnarray}
 \label{eq:nl}
  \partial_t u
  +
  \partial_x ({f(u)} \nu(\mu*\overline{\nu}(u)))&=& 0\,\quad\quad \quad \quad \quad \text{for}\,\,\,(t,x) \in Q_{{T}}:=(0,{T})\times \R,
  \\ \label{IVP:data}
  u(0,x)&=&u_0(x) \quad\quad \quad \,\, \text{for}\,\,\,x \in \R.
\end{eqnarray}  
with $\nu,
\overline{\nu}\in (C^2 \cap W^{2,\infty}) (\R), { f\in \operatorname{Lip}(\R)},$ and ${f(0)=\nu(0)=\overline{\nu}(0)=0.}$
 The \emph{nonlocal} nature
of~\eqref{IVP:data} is invoked by $ \nu(\mu*\overline{\nu}(u))$ and is particularly suitable in describing the behavior of
crowds, where each member moves according to {thier} evaluation of the
crowd density and its variations within the horizon. The research area on nonlocal dynamics and in particular on nonlocal conservation laws has been gaining interest in the mathematical community over the last decade, for two major reasons, one that they are potential models for dynamics of multi agent systems like crowds \cite{CGL2012,CHM2011, CL2011}, traffic \cite {BG2016,CHL2011,BCV+2023,CKR2023, FGKP2023, LEE2019,GR2019}, opinion formation, sedimentation
models \cite{BBKT2011}, structured population dynamics 
 \cite{PER2007}, {laser technology \cite{CM2015}}, supply chain models \cite{CHM2011}, granular material dynamics \cite{AS2012} and  conveyor belt dynamics \cite{GHS+2014}, and the other being that the existing techniques of local counterparts are not directly applicable nor easily extendable to these PDEs. The uniqueness of the weak solutions for this class with a linear $f$, {without the use of any additional entropy condition}, has been settled in \cite{KP2021}, the existence of the solutions via the convergence of finite volume approximations has been studied in \cite{ ACG2015,AG2016,ACT2015,BG2016,FGKP2022}, {and the convergence rate analysis of the finite volume schemes has been recently studied in \cite{AHV2023}}.

 However, the roads can possibly be non uniform and rough (changing abruptly), motivating us to consider,
\begin{eqnarray}\label{13456}u_t+(F(x,u){\nu}(\mu*(\overline{\nu}(u)))_x=0\end{eqnarray} where $ F(x,u)$  can become a discontinuous function of $x$, with the set of discontinuities being possibly non discrete. The \textit{local} counterpart of \eqref{13456}, \begin{eqnarray}\label{134586}u_t+(F(x,u)\nu(\overline{\nu}(u)))_x=0,\end{eqnarray} 
  is quite rich by now, {a very incomplete list of references being}, \cite{AMV2005,ASSV2020,GJT2020,GTV2020,GTV2022a,GTV2022,AP2005, AKR2011, BGKT2008, BKT2009}. However, the nonlocal conservation laws \eqref{13456}, remain far from being settled, with the only exceptions being, \cite{ACG2015,ACT2015,BBKT2011, BG2016, GR2019, KPS2018} for smooth $F$, and \cite{CC2022,CV2021, KP2021} for partial results with spatially discontinuous $F(x,u)=\mathfrak{s}(x)u$. References \cite{ACG2015,ACT2015,BBKT2011,KPS2018} show the existence of entropy solutions via convergence of numerical approximations. Reference \cite{KP2021} establishes the same via a fixed-point argument with a bounded discontinuous $\mathfrak{s}$ and monotone non-symmetric kernel $\mu$. Further, the case of monotonically increasing piecewise constant $\mathfrak{s}$, with a single spatial discontinuity and $\mu$ being a downstream kernel, is handled in \cite{CC2022,CV2021}. The article \cite{CV2021} proves the existence via a convergent numerical scheme, using $\BV$ compactness away from the discontinuous interface, while \cite{CC2022} establishes the existence via a viscous approximation of \eqref{13456} and  proves the uniqueness with an appropriate interface condition. The question of well-posedness for \eqref{13456} with $F(x,u)$ being a non--linear function of $u$ and with spatial discontinuities (even single/finitely many) via any kinds of techniques, and for $F(x,u)=\mathfrak{s}(x)u$ with multiple finite/non discrete discontinuities via analysis of finite volume schemes,  remain unexplored and unsettled as of date.

The main difficulty in the analysis of numerical schemes is their compactness, which  in general, is counter intuitive and mathematically challenging even with ``local" discontinuous flux, {owing to the increase} in the total variation of the solution, (see for example, \cite{AJV2004,AMV2005,AKR2010,BGKT2008,GNPT2007,TOW2020}, and \cite{GJT2020,GTV2020,GTV2022a,GTV2022} for some recent developments). As a result, the compactness of the numerical approximations is non trivial and has a huge literature dedicated to its credit. The compactness, in general, is achieved via various techniques such as singular mapping, Flux TVD property and $\BV_{loc}$ bounds on the numerical approximations. Thus, it becomes interesting to study the convergence of numerical approximations of \eqref{13456} and extend the literature of techniques for local conservation laws to nonlocal ones. Since $F(x,u)$ is {nonlinear}, multiple weak solutions may exist, like for the local counterpart, and hence, an appropriate entropy condition is required, under which the solutions can be proved to be unique. 

This article settles these questions for \eqref{13456}

 in a {generic} setup, where  $F(\cdot,u)$ can admit {generic} spatial discontinuities (infinitely many with possible accumulation points), can possibly be a {nonlinear} function of $u$ and where \eqref{13456} takes the following form:
\begin{align}
 \label{eq:u}
  \partial_t u
  +
  \partial_x (f(\mathfrak{s}(x)u)\nu(\mu*\overline{\nu}(u)))
  &=
  0{\,\quad\quad \quad \quad \quad \text{for}\,\,\,(t,x) \in Q_{{T}}},
  \\ \label{eq:id}
  u(0,x)&=u_0(x){\, \, \quad \quad \quad \text{for}\,\,\,x \in \R},
\end{align} 
with

\begin{description}
\item[(H1)] $f \in \lip(\R)$  with $ f(0)=0,$ and $\mu,\nu,
\overline{\nu}\in {(C^2 \cap W^{2,\infty}   \cap \BV)}(\R)$ with ${\nu(0)=}\overline{\nu}(0)=0.$
\item[(H2)]  $\mathfrak{s}\in \BV(\R),$ with ${\inf_{x\in\R}\mathfrak{s}(x) >0}.$ 
\end{description}
Note that for the flux function $\overline{\nu}(u)=u,\mathfrak{s}=1,$ { and $\mu=\delta_0$ (Dirac delta distribution)} \eqref{eq:u} boils down to {the }generalized LWR model \eqref{LWR} which we began with. { The assumptions $f(0)=\nu(0)=\overline{\nu}(0)=0$ assures the physical bounds such as positivity and  stability estimates on the solution. Furthermore, these assumptions also imply that $u_0=0$ corresponds to {a }stationary state. Since $\mathfrak{s}$ is only $\BV,$ the flux function can be rough (depicting traffic on rough roads), i.e., $x\mapsto f(s(x)u)$ can be possibly discontinuous, potentially having an infinite number of points of discontinuity, including accumulation points.}
Moreover, the technical assumptions \textbf{(H1)}-\textbf{(H2)} are justified as these are the necessary technical assumptions required for well-posedness of the following scalar \textit{local} counterpart/nonlocal continuous version of \eqref{eq:u}-\eqref{eq:id},\begin{eqnarray}
 \label{eq:u11}
  \partial_t u
  +
  \partial_x (f(\mathfrak{s}(x)u) \nu(\overline{\nu}(u)))=0,
\end{eqnarray} see \cite{ACG2015,ACT2015,GTV2022a,GTV2022,PAN2009,TOW2020} and references therein. Since $f$ is {nonlinear}, an entropy condition is required to single out a unique solution, which we choose to be following:
\begin{definition} \label{def:entropy} A { function} $u\in C([0,T];\L1(\R))\cap {\L{\infty}([0,T];\BV(\R))}$ is an entropy solution of IVP~\eqref{eq:u}-\eqref{eq:id}, 

if for all $\alpha \in \R,$ {and for all non-negative $\phi\in C_c^{\infty}([0,T)\times \R)$}, the following holds:
\begin{eqnarray}\label{kruz}
&&\nonumber \int\limits_{Q_T} |u(t,x)-\mathfrak{s}_{\alpha}(x)|\phi_t\d{x} \d{t} \\\nonumber
&&+\int\limits_{Q_T}\sgn (u(t,x)-\mathfrak{s}_{\alpha}(x)) \mathcal{U}(t,x)(f(\mathfrak{s}(x)u)-f(\alpha))\phi_x \d{x} \d{t}\\\nonumber &&-\int\limits_{Q_T} f(\alpha) (\sgn (u(t,x)-\mathfrak{s}_{\alpha}(x))) \mathcal{U}_x(t,x)\phi \d{x} \d{t}\\
&&+\int\limits_{\R} |u_0(x)-\mathfrak{s}_{\alpha}(x)|\phi(0,x) \d x \geq 0,
\end{eqnarray}
where for $(t,x)\in Q_T,\, \mathcal{U}(t,x)= \nu(\mu*\overline{\nu}(u(t,x)))$ and $\mathfrak{s}_{\alpha}:\R \rightarrow \R$ is defined by {$\mathfrak{s}_{\alpha}(x):=\frac{\alpha}{\mathfrak{s}(x)}.$}
\end{definition}

In this article, a class of {numerical} schemes approximating \eqref{eq:u} will be introduced in \S\ref{main}. {This class} will be shown to have the required physical {properties such as}  positivity, mass conservation, $\L\infty$ bounds, along with the non-trivial global $\BV$ bound, which in general, {does} not hold for local conservation laws with discontinuous flux, making compactness arguments non-standard. Further, it will be shown in \S\ref{unique} that using the doubling of variables technique, any two entropy solutions of \eqref{eq:u} with the same initial data \eqref{eq:id}, satisfying \eqref{kruz} are {in fact} equal, and the approximate solutions obtained from the finite volume schemes satisfy the discrete form of entropy inequality \eqref{kruz}. As a result, the approximate solutions converge to the unique entropy solution of \eqref{eq:u}-\eqref{eq:id}. This uniqueness result is novel and is not dealt {with} in existing uniqueness results of \cite{BBKT2011,BG2016,KP2021}, which deal with either linear(discontinuous) or { nonlinear} smooth \textit{local} fluxes only. At the time of writing of this article, this is the first result establishing the existence and uniqueness of solutions of such nonlocal conservation laws with discontinuous flux, without any technical restrictions on monotonicity/finiteness of number of discontinuities of $\mathfrak{s}.$ {Hence, it }presents the first result in a { generic} setting, in contrast to the existing studies in the literature. 

The question on solutions of \eqref{eq:u}-\eqref{eq:id} converging to the solution of its local counterpart {\eqref{eq:u11}} as kernel support $\mu\rightarrow 0$, is far from fully understood and tackled. The current literature \cite{BS2020,BS2021,CCDKP2022,GNAL2021,CGES2021, 
CCS2019,FGKP2022,KL2022, KP2019} and the references therein, focus on  fluxes with only a \textit{linear} local part, where an additional entropy condition is not required to single out the unique solution\cite{CV2021,CAL2020, FKG2018, KP2021}.  Passage through the limit in \eqref{kruz} in case of { nonlinear} $f$, may not be straightforward. Though the article does not treat this question theoretically, numerical experiments  presented in \S\ref{NR} indicate that the convergence of nonlocal solutions to local solutions {might hold}. 

The paper is organized as follows. 
In \S\ref{unique}, we prove the uniqueness of the proposed entropy solution.
In \S\ref{main}, we present the convergence of { the} Godunov type and Lax-Friedrichs type schemes to the entropy solutions of the IVP \eqref{eq:u}-\eqref{eq:id}.  Finally, in \S\ref{NR}, we present some numerical experiments which illustrate the theory and also illustrate numerically, nonlocal to local behavior of such PDEs (see for example \cite{BS2021,CCS2019,CGES2021,CCMS2023, KP2022}).

\section{Uniqueness}\label{unique}
{In this section we prove the uniqueness of the entropy solutions, which will be used in \S\ref{NR} to show that the entire sequence of numerical approximations converge to the unique entropy solution of \eqref{eq:nl}-\eqref{eq:id}.}

For further use, for $(t,x)\in Q_T$, we define $\overline{u}(t,x):=\mathfrak{s}(x)u(t,x)$ so that $\sgn (u(t,x)-\mathfrak{s}_{\alpha}(x))=\sgn (\overline{u}(t,x)-\alpha)$, as $\mathfrak{s}>0$. Consequently, \eqref{kruz} {can be rewritten as:}
\begin{eqnarray}\label{kruz2}
&&\nonumber \int\limits_{Q_T}\left|u(t,x)-\frac{\alpha}{\mathfrak{s}(x)}\right|\phi_t\d{x} \d{t}\nonumber\\&&
+ \int\limits_{Q_T}\sgn (\overline{u}(t,x)-\alpha) (f(\overline{u}(t,x))-f(\alpha)) \mathcal{U}(t,x)\phi_x \d{x} \d{t}\nonumber\\\nonumber && -\int\limits_{Q_T} f(\alpha) (\sgn (\overline{u}(t,x)-\alpha)) \mathcal{U}_x(t,x)\phi \d{x} \d{t}\nonumber\\&&+\int\limits_{\R} |u_0(x)-\mathfrak{s}_{\alpha}(x)|\phi(0,x) \d x \geq 0 \quad  \quad \forall \, { 0\le } \phi\in C_c^{\infty}([0,T)\times \R).
\end{eqnarray}

\begin{theorem}\label{uniqueness}[Uniqueness of the entropy solutions]
Let $u,v$ be two entropy solutions of the IVP \eqref{eq:u}-\eqref{eq:id} corresponding to the initial data $u_0, v_0 {\,  \in (\L1\cap\BV)(\R)}$. Then, $u=v$ a.e. on { $\overline{Q}_T$.}
\end{theorem}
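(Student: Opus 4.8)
The plan is to run a doubling-of-variables argument adapted to the discontinuous coefficient $\mathfrak{s}$, and to absorb the genuinely nonlocal contributions into a Gr\"onwall inequality for $t\mapsto\norma{u(t,\cdot)-v(t,\cdot)}_{\L1(\R)}$. The guiding principle is that, because $\mathfrak{s}>0$, one has $\abs{u-\mathfrak{s}_{\alpha}(x)}=\frac{1}{\mathfrak{s}(x)}\abs{\overline{u}-\alpha}$ and $\sgn(u-\mathfrak{s}_{\alpha}(x))=\sgn(\overline{u}-\alpha)$, so the adapted Kruzkov entropies built on the steady profiles $\mathfrak{s}_{\alpha}(x)=\alpha/\mathfrak{s}(x)$ are exactly the right objects: they are stationary in the sense that $f(\mathfrak{s}(x)\mathfrak{s}_{\alpha}(x))-f(\alpha)\equiv 0$, which is precisely what makes the contributions localized at the (possibly non-discrete) jumps of $\mathfrak{s}$ disappear. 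The nonlocal velocity $\mathcal{U}(t,x)=\nu(\mu*\overline{\nu}(u(t,x)))$ is treated as a coefficient: by \textbf{(H1)} the convolution with $\mu\in W^{2,\infty}$ smooths the solution, so $\mathcal{U}$ and $\mathcal{U}_x$ are bounded and Lipschitz in $(t,x)$, and this regularity is what allows the classical machinery to close.

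Concretely, I would write the rewritten entropy inequality \eqref{kruz2} for $u=u(t,x)$ with the choice $\alpha=\overline{v}(s,y)$ and a nonnegative test function $\Phi(t,x,s,y)=\psi\!\left(\tfrac{t+s}{2},\tfrac{x+y}{2}\right)\omega_{\epsilon_0}(t-s)\rho_{\epsilon}(x-y)$, integrating in $(s,y)$; symmetrically I would write \eqref{kruz2} for $v=v(s,y)$ with $\alpha=\overline{u}(t,x)$, integrate in $(t,x)$, and add. With this product structure the mollifier derivatives cancel pairwise, $\Phi_t+\Phi_s=\omega_{\epsilon_0}\rho_{\epsilon}\,\psi_{\tau}$ and $\Phi_x+\Phi_y=\omega_{\epsilon_0}\rho_{\epsilon}\,\psi_{\xi}$, so after letting $\epsilon_0,\epsilon\to0$ the time term converges to $\int_{Q_T}\abs{u-v}\,\psi_{\tau}$ (the weight $1/\mathfrak{s}(x)$ combines with $\overline{v}=\mathfrak{s}v$ to give exactly $\abs{u-v}$ on the diagonal, using that a.e.\ point is a Lebesgue point of the $\BV$ function $\mathfrak{s}$), and the flux term converges to the symmetric Kruzkov entropy flux $\int_{Q_T}\sgn(\overline{u}-\overline{v})\,(f(\overline{u})-f(\overline{v}))\,\mathcal{U}(t,x)\,\psi_{\xi}$, the velocity mismatch in the flux term being killed in the limit by the Lipschitz continuity of $\mathcal{U}$ in $(t,x)$.

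The crux is the mismatch of the two nonlocal velocities; writing $\mathcal{U}_u,\mathcal{U}_v$ for the velocities of Definition~\ref{def:entropy} associated with $u$ and $v$, the flux and (nonconservative) source contributions leave behind, after the diagonal limit, terms in which $\mathcal{U}_u(t,x)$ and $\mathcal{U}_v(t,x)$ both appear. Splitting, e.g., $f(\overline{v})\mathcal{U}_{u,x}-f(\overline{u})\mathcal{U}_{v,x}=f(\overline{v})\,(\mathcal{U}_{u,x}-\mathcal{U}_{v,x})+(f(\overline{v})-f(\overline{u}))\,\mathcal{U}_{v,x}$, the second piece is pointwise bounded by $\norma{f}_{\lip}\norma{\mathfrak{s}}_{\L\infty}\norma{\mathcal{U}_{v,x}}_{\L\infty}\abs{u-v}$, a legitimate Gr\"onwall term, while the first piece is controlled through the Lipschitz-in-$\L1$ estimates
\begin{eqnarray*}
\abs{\mathcal{U}_u(t,x)-\mathcal{U}_v(t,x)}&\le& \norma{\nu'}_{\L\infty}\,\norma{\mu}_{\L\infty}\,\norma{\overline{\nu}'}_{\L\infty}\,\norma{u(t,\cdot)-v(t,\cdot)}_{\L1(\R)},\\
\abs{\mathcal{U}_{u,x}(t,x)-\mathcal{U}_{v,x}(t,x)}&\le& C\,\norma{u(t,\cdot)-v(t,\cdot)}_{\L1(\R)},
\end{eqnarray*}
the second line using $\mu_x,\mu_{xx}\in\L\infty$ and the Lipschitz character of $\nu'$, together with the $\L\infty$ and $\L1$ bounds on $u,v$ furnished by Definition~\ref{def:entropy}. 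These estimates turn every velocity-mismatch term into a constant multiple of $\norma{u(t,\cdot)-v(t,\cdot)}_{\L1(\R)}$.

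Finally, because the adapted entropies are exact steady states, the terms carrying the derivative of $\mathfrak{s}$ (a finite measure, by \textbf{(H2)}) cancel, and the surviving symmetric flux term is handled exactly as in the local case once $\mathcal{U}$ is recognized as a Lipschitz coefficient. Collecting everything and specializing $\psi$ to approximate the indicator of a time strip $[0,\tau]$ (with the spatial cutoff sent to infinity, so that $\psi_{\xi}\to0$) yields
\[
\norma{u(\tau,\cdot)-v(\tau,\cdot)}_{\L1(\R)}\ \le\ \norma{u_0-v_0}_{\L1(\R)}+C\int_0^{\tau}\norma{u(t,\cdot)-v(t,\cdot)}_{\L1(\R)}\,\d t,
\]
and since $u_0=v_0$, Gr\"onwall's inequality forces $u=v$ a.e.\ on $\overline{Q}_T$. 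I expect the main obstacle to be precisely this nonlocal coupling: unlike the local problem, adding the two entropy inequalities does not produce a clean $\L1$-contraction, and one must verify that each extra velocity-difference term, including the nonconservative source term $-f(\alpha)\sgn(\overline{u}-\alpha)\mathcal{U}_x\phi$ that has no local analogue, is dominated by $\norma{u(t,\cdot)-v(t,\cdot)}_{\L1(\R)}$; a secondary delicate point is justifying the diagonal limit of the $\mathfrak{s}$-weighted entropy and the vanishing of the interface contributions when the discontinuity set of $\mathfrak{s}$ is non-discrete.
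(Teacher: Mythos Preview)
Your overall strategy matches the paper's: doubling of variables with the adapted constants $\alpha=\overline v(s,y)$ and $\alpha=\overline u(t,x)$, the $\L1$-Lipschitz bounds for $\mathcal{U}-\mathcal{V}$ and $\mathcal{U}_x-\mathcal{V}_x$, spatial cutoff sent to infinity, and Gr\"onwall. Your treatment of the nonconservative source terms is correct and coincides with what the paper calls $I_{2,2}$.

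The gap is in the flux term. You assert that it converges to $\int_{Q_T}\sgn(\overline u-\overline v)(f(\overline u)-f(\overline v))\,\mathcal{U}\,\psi_\xi$, the velocity mismatch being ``killed by the Lipschitz continuity of $\mathcal{U}$''. But the flux contribution from $u$ carries $\mathcal{U}(t,x)\Phi_x$ while that from $v$ carries $\mathcal{V}(s,y)\Phi_y$ with $\mathcal{U}\ne\mathcal{V}$, so the mollifier-derivative pieces of $\Phi_x$ and $\Phi_y$ do \emph{not} cancel; after adding one is left with a term of the form
\[
\int_{Q_T^2}G(\overline u(t,x),\overline v(s,y))\,\bigl(\mathcal{V}(s,y)-\mathcal{U}(t,x)\bigr)\,\psi\,\omega_{\epsilon_0}(t-s)\,\rho_\epsilon'(x-y)\,\d t\,\d x\,\d s\,\d y,
\]
where $G(a,b)=\sgn(a-b)(f(a)-f(b))$. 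On the diagonal $\mathcal{V}-\mathcal{U}$ is of order $\norma{u(t,\cdot)-v(t,\cdot)}_{\L1(\R)}$, not $O(|x-y|)$, so Lipschitz regularity of $\mathcal{U}$ cannot offset $\rho_\epsilon'=O(\epsilon^{-2})$. The paper's remedy---and the step missing from your outline---is to integrate this term by parts in $x$, which produces $\partial_xG(\overline u(t,x),\overline v(s,y))$ and is controlled through
\[
\abs{\partial_x G(\overline u(t,x),\overline v(s,y))}\le\abs{f}_{\lip(\R)}\,\abs{\partial_x\overline u(t,x)}
\]
in the sense of measures. This is precisely where the hypothesis $u\in L^\infty([0,T];\BV(\R))$ of Definition~\ref{def:entropy} (combined with $\mathfrak{s}\in\BV(\R)$, so that $\overline u=\mathfrak{s}\,u$ is again $\BV$ in $x$) enters the uniqueness proof; you appeal only to the $\L1$ and $\L\infty$ bounds, never to the $\BV$ regularity. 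After the integration by parts the term is bounded by $C\,\norma{\mathcal{U}(t,\cdot)-\mathcal{V}(t,\cdot)}_{\L\infty(\R)}\abs{\overline u(t,\cdot)}_{\BV(\R)}$, which feeds into Gr\"onwall as a legitimate right-hand side. As a side remark, your ``secondary delicate point'' about interface contributions when the discontinuity set of $\mathfrak{s}$ is non-discrete is a non-issue here: the adapted entropy inequality \eqref{kruz2} was designed precisely so that no interface or defect-measure term ever appears in the doubling argument.
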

\begin{proof}
Let $u$ and $v$ be the entropy solutions to \eqref{eq:u}-\eqref{eq:id} corresponding to the initial data $u_0$ and $v_0$ respectively. Let $\mathcal{U}(t,x)= \nu(\mu*\overline{\nu}(u(t,x)))$  and $\mathcal{V}(s,y)= \nu(\mu*\overline{\nu}(v(s,y)))$. 
For $\phi=\phi(t,x,s,y) \in C_c^{\infty}(Q_T^2),$
 and for fixed $(s,y)\in Q_T$, we rewrite the entropy condition \eqref{kruz} for {$u$} with $\alpha=\overline{v}(s,y)$ as:
\begin{equation}\label{511}
\begin{array}{lll}&&0\leq\displaystyle \int\limits_{{Q_T}} \abs{u(t,x)-\frac{\overline{v}(s,y)}{\mathfrak{s}(x)}}\phi_t\d{x} \d{t}\\
&&+\displaystyle \int\limits_{{Q_T}}\sgn \left(\overline{u}(t,x)-\overline{v}(s,y)\right) \big(\left(f(\overline{u}(t,x))-f(\overline{v}(s,y))\right)\mathcal{U}(t,x)\phi_x\\&&\quad\qquad\quad\quad\quad\quad\quad\quad\quad\quad\quad\quad- f(\overline{v}(s,y))\mathcal{U}_x(t,x)\phi\big)\d{x} \d{t}.
\end{array}
\end{equation}
Using the relation
\begin{eqnarray*}
&&\left(f(\overline{u}(t,x))-f(\overline{v}(s,y))\right)\mathcal{U}(t,x)\phi_x-f(\overline{v}(s,y)) \mathcal{U}_x(t,x)\phi\\&&\quad \quad \quad \quad =\left(\left(\mathcal{V}(s,y)-\mathcal{U}(t,x)\right)f(\overline{v}(s,y))\phi\right)_x\\
&&\quad \quad \quad \quad\quad +\left(f(\overline{u}(t,x))\mathcal{U}(t,x)-f(\overline{v}(s,y))\mathcal{V}(s,y)\right)\phi_x,
\end{eqnarray*}
the inequality \eqref{511} can now be rewritten as:
\begin{align}\label{611}
&-\displaystyle \int\limits_{{Q_T}} \abs{u(t,x)-\frac{\overline{v}(s,y)}{\mathfrak{s}(x)}}\phi_t \d{x} \d{t}\\\nonumber
&-\displaystyle \int\limits_{{Q_T}}\sgn(\overline{u}(t,x)-\overline{v}(s,y))\left(f(\overline{u}(t,x))\mathcal{U}(t,x)-f(\overline{v}(s,y))\mathcal{V}(s,y)\right)\phi_x \d{x} \d{t} \\ \nonumber
&-\displaystyle \int\limits_{{Q_T}}\sgn\left( \overline{u}(t,x)-\overline{v}(s,y)\right)\left(\left(\mathcal{V}(s,y)-\mathcal{U}(t,x)\right)f(\overline{v}(s,y))\phi\right)_x\d{x} \d{t}
\leq 0.
\end{align}
Repeating the same arguments, { for a fixed $(t,x)\in Q_T,$ the entropy condition for $v$ }can be rewritten as:\begin{align}\label{6111}
&-\displaystyle \int\limits_{{Q_T}} \abs{v(s,y)-\frac{\overline{u}(t,x)}{\mathfrak{s}(y)}}\phi_s \d{y} \d{s}\\\nonumber
&-\displaystyle \int\limits_{{Q_T}}\sgn \left(\overline{v}(s,y)-\overline{u}(t,x)\right)\left(f(\overline{v}(s,y))\mathcal{V}(s,y)-f(\overline{u}(t,x))\mathcal{U}(t,x)\right)\phi_y \d{y} \d{s} \\\nonumber
&-\displaystyle \int\limits_{{Q_T}}\sgn\left( \overline{v}(s,y)-\overline{u}(t,x)\right)\left(\left(\mathcal{U}(t,x)-\mathcal{V}(s,y)\right)f(\overline{u}(t,x))\phi\right)_y\d{y} \d{s}
\leq 0.
\end{align}
Integrating \eqref{611} and \eqref{6111} in { $s,y$ and $t,x$ respectively} and adding, we get
\begin{eqnarray*}\label{31}
&&-\displaystyle \int\limits_{{{Q^2_T}}}\left( \abs{u(t,x)-\frac{\overline{v}(s,y)}{\mathfrak{s}(x)}}\phi_t+\abs{v(s,y)-\frac{\overline{u}(t,x)}{\mathfrak{s}(y)}}\phi_s \right) \d{x} \d{t} \d{y} \d{s}\\&&
-\displaystyle \int\limits_{{{Q^2_T}}}\sgn(\overline{u}(t,x)-\overline{v}(s,y))\left(f(\overline{u}(t,x))\mathcal{U}(t,x)-f(\overline{v}(s,y))\mathcal{V}(s,y)\right)(\phi_x+\phi_y) \d{x} \d{t} \d{y} \d{s}\\&&
-\displaystyle \int\limits_{{{Q^2_T}}} \sgn(\overline{u}(t,x)-\overline{v}(s,y)) \big(\left(\left(\mathcal{V}(s,y)-\mathcal{U}(t,x)\right)f(\overline{v}(s,y))\phi\right)_x\\&& \qquad \quad \quad \quad \quad \quad \quad  \quad \quad \quad\ \quad -\left(\left(\mathcal{U}(t,x)-\mathcal{V}(s,y)\right)f(\overline{u}(t,x))\phi\right)_y  \big)\d{x} \d{t} \d{y} \d{s}
\leq 0.
\end{eqnarray*}
{Rewriting} the above equations, we get:
\begin{eqnarray*}
- \displaystyle \int\limits_{{{Q^2_T}}}(I_0+I_1+I_2)\, \d{x} \d{t} \d{y} \d{s} \leq  0,
\end{eqnarray*}
\begin{align*}
I_0&=\abs{u(t,x)-\frac{\overline{v}(s,y)}{\mathfrak{s}(x)}}\phi_t+ \abs{v(s,y)-\frac{\overline{u}(t,x)}{\mathfrak{s}(y)}}\phi_s\\
I_1&=\sgn(\overline{u}(t,x)-\overline{v}(s,y))\left(f(\overline{u}(t,x))\mathcal{U}(t,x)-f(\overline{v}(s,y))\mathcal{V}(s,y))\right)(\phi_x+\phi_y)\\
I_2&= \sgn(\overline{u}(t,x)-\overline{v}(s,y)) \big(\left(\left(\mathcal{V}(s,y)-\mathcal{U}(t,x)\right)f(\overline{v}(s,y))\phi\right)_x \\&\quad\quad\quad\quad\quad\quad\quad\quad\quad\quad\quad-\left(\left(\mathcal{U}(t,x)-\mathcal{V}(s,y)\right)f(\overline{u}(t,x))\phi\right)_y\big).
\end{align*}
Next, we introduce a non-negative function $\xi\in \mathbf{\mathbf{C_c^{\infty}}}(\R)$ such that
    \[ \displaystyle \int_{\R}\xi(\sigma)\d{\sigma}=1, \xi(\sigma)=\xi(-\sigma),\xi(\sigma)=0, {\text{for all }}|\sigma|\ge 1,\] and set
    \[\xi_{\rho}(s):=\frac{1}{\rho}\xi\left(\frac{s}{\rho}\right),\quad  \rho\in \R^+,s\in \R.\]
   We then choose $\phi:=\Phi=\Phi(t,x,s,y)\in \mathbf{C_c^{\infty}}({Q^2_T}),$ by  \begin{eqnarray}\label{phi}\Phi(t,x,s,y)=\psi(t,x)\xi_{\rho}(t-s)\xi_{\rho}(x-y),
\end{eqnarray} where  $\psi=\psi(t,x)\in \mathbf{C_c^{\infty}}({Q_T})$ is a non-negative test function.
It is then straightforward to see that
\begin{align*}
&\left(\mathcal{V}(s,y)-\mathcal{U}(t,x)\right)f(\overline{v}(s,y))\Phi_x(t,x,s,y)\\&\qquad= \left(\mathcal{V}(s,y)-\mathcal{U}(t,x)\right)f(\overline{v}(s,y))\psi(t,x)\xi_{\rho}(t-s)\xi^{'}_{\rho}(x-y)\\
&\qquad\quad+\left(\mathcal{V}(s,y)-\mathcal{U}(t,x)\right)f(\overline{v}(s,y))\psi_x(t,x)\xi_{\rho}(t-s)\xi_{\rho}(x-y)
\\&\left(\mathcal{U}(t,x)-\mathcal{V}(s,y)\right)f(\overline{u}(t,x))\Phi_y(t,x,s,y)\\&\qquad = \left(\mathcal{V}(s,y)-\mathcal{U}(t,x)\right)f(\overline{u}(t,x))\psi(t,x)\xi_{\rho}(t-s)\xi^{'}_{\rho}(x-y).
\end{align*}
Now, { using the chain rule we can write}
\begin{align*}
I_2 &={\sgn(\overline{u}(t,x)-\overline{v}(s,y)) \big(\left(\mathcal{V}(s,y)-\mathcal{U}(t,x)\right)f(\overline{v}(s,y))\Phi_x }\\ &{\quad\quad\quad\quad\quad\quad\quad \quad\quad\quad\quad -\left(\mathcal{U}(t,x)-\mathcal{V}(s,y)\right)f(\overline{u}(t,x))\Phi_y\big)}\\
&\quad+{\color{black}\sgn(\overline{u}(t,x)-\overline{v}(s,y))\Phi \big(-\mathcal{U}_x(t,x)f(\overline{v}(s,y)) +\mathcal{V}_y(s,y)f(\overline{u}(t,x))\big)}\\
&:={\color{black}I_{2,1}}+{\color{black}I_{2,2}}.
\end{align*}
 We write $I_{2,1}=I_{2,1,1}+I_{2,1,2}$, where $I_{2,1,1}$ has { terms containing $\psi_x$ and $I_{2,1,2}$ has terms of $\psi,$}
i.e.,
\begin{align*}
I_{2,1,1}&=\sgn(\overline{u}(t,x)-\overline{v}(s,y))\left(\mathcal{V}(s,y)-\mathcal{U}(t,x)\right)f(\overline{v}(s,y))\psi_x(t,x)\xi_{\rho}(t-s)\xi_{\rho}(x-y)\\
I_{2,1,2}&=\sgn(\overline{u}(t,x)-\overline{v}(s,y))\left(\mathcal{V}(s,y)-\mathcal{U}(t,x)\right)(f(\overline{v}(s,y))-f(\overline{u}(t,x)))\\&\quad\quad\quad \times \psi(t,x)\xi_{\rho}(t-s) \xi^{'}_{\rho}(x-y)
\\
&=-\left(\mathcal{V}(s,y)-\mathcal{U}(t,x)\right)G(\overline{u}(t,x),\overline{v}(s,y))\psi(t,x)\xi_{\rho}(t-s)\xi^{'}_{\rho}(x-y),
\end{align*}
where \[G(a,b)=\sgn [a-b](f(a)-f(b)).\]
Applying integration by parts in $I_{2,1,2},$ 
we have \begin{align*}
&\displaystyle \int_{{Q^2_T}}I_{2,1,2}\d{t}\d{s}\d{x}\d{y}\\&=-
\displaystyle \int_{{Q^2_T}}\left(\mathcal{V}(s,y)-\mathcal{U}(t,x)\right)G(\overline{u}(t,x),\overline{v}(s,y))\psi(t,x)\xi_{\rho}(t-s)\xi^{'}_{\rho}(x-y)\d{t}\d{s}\d{x}\d{y}\\
&=\displaystyle \int_{{Q^2_T}}\left(-\mathcal{U}_x(t,x)G(\overline{u}(t,x),\overline{v}(s,y))+(\mathcal{V}(s,y)-\mathcal{U}(t,x))\partial_x G(\overline{u}(t,x),\overline{v}(s,y))\right)\\
&\quad\quad\quad\quad\times\psi(t,x)\xi_{\rho}(t-s)\xi_{\rho}(x-y)\d{t}\d{s}\d{x}\d{y}\\
&\quad+ \displaystyle \int_{{Q^2_T}}\left(\mathcal{V}(s,y)-\mathcal{U}(t,x)\right)G(\overline{u}(t,x),\overline{v}(s,y))\psi_x{(t,x)}\xi_{\rho}(t-s)\xi_{\rho}(x-y)\d{t}\d{s}\d{x}\d{y}.
\end{align*}
Consequently,
\begin{align*}
&\int_{Q_T^2}
(I_{2,1,2}+I_{2,2}) \d t \d x \d s \d y\\
&=\int_{Q_T^2} \big(\Phi \big(\sgn(\overline{u}(t,x)-\overline{v}(s,y))(\mathcal{V}_y(s,y)-\mathcal{U}_x(t,x))f(\overline{u}(t,x))\\&\quad\quad\quad\quad\quad+(\mathcal{V}(s,y)-\mathcal{U}(t,x))\partial_x G(\overline{u}(t,x),\overline{v}(s,y))\big)\\&\quad\quad\quad+\left(\mathcal{V}(s,y)-\mathcal{U}(t,x)\right)G(\overline{u}(t,x),\overline{v}(s,y))\psi_x{(t,x)}\xi_{\rho}(t-s)\xi_{\rho}(x-y) \big) \d t \d x \d s \d y.
\end{align*}
{Since $\mathfrak{s}\in \BV(\R)$ and $u\in L^{\infty}([0,T];\BV(\R)),$   we have $\overline {u} \in L^{\infty}([0,T];\BV(\R))$ and  consequently we have the following inequality in the sense of measures} (see \cite[Lem.~4.1]{KR2003} for details): \[\abs{\partial_x G(\overline{u}(t,x),\overline{v}(s,y))}\le {\abs{f}_{\lip(\R)}}\abs{\partial_x \overline{u}(t,x)},\]
{with  
\begin{align}
\nn \int_{\R}\abs{\partial_x \overline{u}(t,x)} \d x &\leq \abs{\overline{u}}_{L^{\infty}([0,T];\BV(\R))} \\ \nn & \leq \abs{\mathfrak{s}}_{\BV(\R)} \norma{u}_{L^{\infty}(Q_T)}+ \norma{\mathfrak{s}}_{L^{\infty}(\R)}\abs{u}_{L^{\infty}([0,T];\BV(\R))}\\&:=\mathcal{C}_1  <\infty. \label{C0}
\end{align}}
Sending the mollifier radius $\rho \downarrow 0$ in the above integrals, we get
\begin{eqnarray*}
&&\lim_{\rho\downarrow 0}\int_{{Q^2_T}}I_0 \d{t}\d{x}\d{s}\d{y}=\int_{{Q_T}}|u(t,x)-v(t,x)|\psi_t{(t,x)}\d{t}\d{x}\\&&
\lim_{\rho\downarrow 0}\int_{{Q^2_T}}I_1 \d{t}\d{x}\d{s}\d{y}=\int_{{Q_T}}\sgn [\overline{u}(t,x)-\overline{v}(t,x)]\left(g(\overline{u}(t,x))\mathcal{U}(t,x)-g(\overline{v}(t,x))\mathcal{V}(t,x)\right)\\&& \qquad \qquad \qquad \qquad \qquad \qquad\quad \times \psi_x{(t,x)}\d{t}\d{x}\\
&&\lim_{\rho\downarrow 0}\int_{{Q^2_T}}I_{2,1,1} \d{t}\d{x}\d{s}\d{y}=\int_{{Q_T}}\sgn [\overline{u}(t,x)-\overline{v}(t,x)]\left(\mathcal{V}(t,x)-\mathcal{U}(t,x)\right)g(\overline{v}(t,x))\\&& \quad \qquad \qquad \qquad \qquad \qquad\qquad \times\psi_x{(t,x)}\d{t}\d{x}\\&&
\lim_{\rho\downarrow 0}\int_{{Q^2_T}}(I_{2,1,2}+I_{2,2}) \d{t}\d{x}\d{s}\d{y} \le \int_{{Q_T}}\abs{\psi(t,x)} \big( {\abs{\mathcal{V}_x(t,x)-\mathcal{U}_x(t,x))}}{\abs{f}_{\lip(\R)}}\abs{ \overline{u}(t,x)}\\&&\quad \quad\quad \quad\quad \quad\quad \quad\quad \quad\quad \quad\quad\quad \quad \quad \quad\quad \quad +{\abs{\mathcal{V}(t,x)-\mathcal{U}(t,x)}}{\abs{f}_{\lip(\R)}}\abs{\partial_x \overline{u}(t,x)}\big)\d{t}\d{x}\\&&\quad \quad\quad \quad\quad \quad\quad \quad\quad \quad\quad \quad\quad\quad \quad+\int_{{Q_T}}\left|\mathcal{V}(t,x)-\mathcal{U}(t,x)\right| \abs{G(\overline{u}(t,x),\overline{v}(t,x))} \abs{\psi_x(t,x)}\d{t}\d{x}.
\end{eqnarray*} 
Now fix $0<t_1<t<t_2<T$ and choose
\begin{align*} \psi(t,x)&=\psi_{r,\theta}(t,x)=\Psi^1_{r}(x)\Psi^2_{{\theta}}(t)  &&  r>1, {\theta}>0,\\ 
\Psi^1_{r}(x)&=\int_{\R}\xi(|x-y|)\chi_{|y|<r}\d{y}  &&x\in \R, \\
\Psi^2_{{\theta}}(t)&=\int_{-\infty}^t
\left(\xi_{{\theta}}(\tau-t_1)-\xi_{{\theta}}(\tau-t_2)\right)d\tau    && 0<t_1<t<t_2<T,
\end{align*}
so that the terms containing $\psi_x$ will go to zero when $r\uparrow \infty,$ in effect implying that,
\begin{eqnarray*}
&&- \lim_{\rho \downarrow 0, r\uparrow \infty}\displaystyle \int\limits_{{Q^2_T} }(I_0+I_1+I_{2,1,1}+I_{2,1,2}+I_{2,2})\, \d{x} \d{t} \d{y} \d{s} \leq  0.
\end{eqnarray*}
Now, as $\theta\downarrow 0,$ we have 
\begin{align}
& \int_{\R}{\color{black}\abs{{u}(t_2,x)-{v}(t_2,x)}}\d{x}-\displaystyle \int_{\R}{\color{black}\abs{{u}(t_1,x)-{v}(t_1,x)}}\d{t}\d{x}\nonumber\\
&\leq  \displaystyle \int_{t_1}^{t_2}\displaystyle \int_{\R} \big({\norma{\mathcal{V}_x(t,\cdot)-\mathcal{U}_x(t,\cdot)}_{\L\infty({ \R})}}{\abs{f}_{\lip(\R)}}\abs{\overline{u}(t,x)} \nn \\& \quad \qquad \qquad +{\abs{f}_{\lip(\R)}}{\norma{\mathcal{V}(t,\cdot)-\mathcal{U}(t,\cdot)}_{\L\infty({\R})}}|\overline{u}
_x(t,x)|\big)\d{x}\d{t}\label{23}.
\end{align}
Further, { for $(t,x)\in Q_T,$ } the nonlocal weights $(\mathcal{U},\mathcal{V})$ satisfy,
\begin{align*}
\begin{split}
|\mathcal{V}(t,x)-\mathcal{U}(t,x)|&=|\nu((\mu*\overline{\nu}(u))(t,x))-\nu((\mu*\overline{\nu}(v))(t,x))|\\&\le{\norma{\nu'}_{\L\infty(\R)}}\abs{(\mu*(\overline{\nu}(u)-\overline{\nu}(v)))(t,x)}\\
&\le {\norma{\nu'}_{\L\infty(\R)}}\norma{\mu}_{\L\infty(\R)}{\norma{\overline{\nu}'}_{\L\infty(\R)}}\norma{u(t,.)-v(t,.)}_{\L1(\mathbb{R})}\\&{:=\mathcal{C}_2\norma{u(t,.)-v(t,.)}_{\L1(\mathbb{R})}},
\end{split}
\end{align*}
{which implies for $t\in [0,T],$
\begin{align}\label{UV}
\norma{\mathcal{V}(t,\cdot)-\mathcal{U}(t,\cdot)}_{L^{\infty}(\R)} \leq \mathcal{C}_2 \norma{u(t,.)-v(t,.)}_{\L1(\mathbb{R})}.
\end{align}}
Similarly, { for $(t,x)\in Q_T, $ }consider\begin{align*}
|&\mathcal{V}_x(t,x)-\mathcal{U}_x(t,x)|\\
&=|\nu'((\mu*\overline{\nu}(v))(t,x))(\mu'*\overline{\nu}(v))(t,x)-\nu'((\mu*\overline{\nu}(u))(t,x))(\mu'*\overline{\nu}(u))(t,x)|.
\end{align*}
Adding and subtracting $\nu'((\mu*\overline{\nu}(v))(t,x))(\mu'*\overline{\nu}(u))(t,x)$ we get:
\begin{align*}
\begin{split}|&\mathcal{V}_x(t,x)-\mathcal{U}_x(t,x)|
\\
&= \left|\nu'((\mu*\overline{\nu}(v))(t,x))(\mu'*\overline{\nu}(v))(t,x)-\nu'((\mu*\overline{\nu}(v))(t,x))(\mu'*\overline{\nu}(u))(t,x)\right|\\
&\quad +\left|\nu'((\mu*\overline{\nu}(v))(t,x))(\mu'*\overline{\nu}(u))(t,x)-\nu'((\mu*\overline{\nu}(u))(t,x))(\mu'*\overline{\nu}(u))(t,x)\right|\\
&= \left|\nu'((\mu*\overline{\nu}(v))(t,x))\left((\mu'*\overline{\nu}(v))(t,x)-(\mu'*\overline{\nu}(u))(t,x)\right)\right|\\
&\quad +\left|(\mu'*\overline{\nu}(u))(t,x)\right|\abs{\nu'((\mu*\overline{\nu}(v))(t,x))-\nu'((\mu*\overline{\nu}(u))(t,x))}\\&\le\norma{\nu'}_{\L\infty(\R)}{\norma{\overline{\nu}'}_{\L\infty(\R)}}\abs{(\mu'*(u-v)
)(t,x)}
\\&\quad+{\norma{\overline{\nu}'}_{\L\infty(\R)}}{\norma{\overline{\mu}'}_{\L\infty(\R)}}
{\norma{u}_{\L1(\R)}}{\norma{\nu''}_{\L\infty(\R)}}{\norma{\overline{\nu}'}_{\L\infty(\R)}}|(\mu*(u-v))(t,x)|\\
&\le \norma{\nu'}_{\L\infty(\R)}{\norma{\overline{\nu}'}_{\L\infty(\R)}}\norma{\mu'}_{\L\infty(\R)}\norma{u(t,.)-v(t,.)}_{\L1(\mathbb{R})}\\
&\quad+ {\norma{\overline{\nu}'}_{\L\infty(\R)}}{\norma{\overline{\mu}'}_{\L\infty(\R)}}
{\norma{u}_{\L1(\R)}}{\norma{\nu''}_{\L\infty(\R)}}{\norma{\overline{\nu}'}_{\L\infty(\R)}}\norma{\mu}_{\L\infty(\R)}\norma{u(t,.)-v(t,.)}_{\L1(\mathbb{R})}\\
&{:=(\mathcal{C}_3+\mathcal{C}_4)\norma{u(t,.)-v(t,.)}_{\L1(\mathbb{R})},}\end{split}\end{align*}
{which implies that for $t\in [0,T],$
\begin{align}\label{UV1}
\norma{\mathcal{V}_x(t,\cdot)-\mathcal{U}_x(t,\cdot)}_{L^{\infty}(\R)} \leq (\mathcal{C}_3+\mathcal{C}_4) \norma{u(t,\cdot)-v(t,\cdot)}_{L^1(\R)}.
\end{align}}
Finally, inserting the estimates \eqref{C0}, \eqref{UV} and \eqref{UV1} in \eqref{23}, we get:
\begin{eqnarray*}
&& \norma{{u}(t_2,\cdot)-{v}(t_2,\cdot)}_{\L1(\mathbb{R})}\le  \norma{{u}(t_1,\cdot)-{v}(t_1,\cdot)}_{\L1(\mathbb{R})}
+{\mathcal{C}_5} \displaystyle \int_{t_1}^{t_2}\norma{{u}(s,\cdot)-{v}(s,\cdot)}_{\L1(\mathbb{R})}\d{s},
\end{eqnarray*}
{ where $\mathcal{C}_5=\abs{f}_{\lip(\R)} \left(\mathcal{C}_1\mathcal{C}_2 +(\mathcal{C}_3+\mathcal{C}_4)\norma{\mathfrak{s}}_{L^{\infty}(\R)} \norma{u}_{L^{\infty}([0,T];L^1(\R))} \right).$}\\
Now, using Gronwall's inequality, {with $t_1=0$ and $t_2=t>0,$} we have \begin{eqnarray*}
{\norma{{u}(t,\cdot)-v(t,\cdot)}_{\L1(\mathbb{R})}\le \norma{u_0-v_0}_{\L1(\mathbb{R})}\exp(\mathcal{C}_5 t).}
\end{eqnarray*}
Now, taking $u_0=v_0,$ the theorem follows.
\end{proof}

\section{Numerical scheme and its convergence}\label{main}For $\Delta x, \Delta t>0$ { and  $\la:=\Delta t/\Delta x, $} we consider equidistant spatial grid points $x_i:=i\Delta x$ for $i\in\Z$ and temporal grid points $t^n:=n\Delta t$ 
for integers $0 \le n\le N$,  such that the final time $T \in [t^N, t^{N+1})$. Let $\chi_i(x)$ denote the indicator function of $C_i:=[x_i - \Delta x /2,  x_i + \Delta x /2)$,  and let
$\chi^n(t)$ denote the indicator function of $C^n:=[t^n, t^{n+1})$. Throughout, an initial datum ${u_0 \in (\L1}\cap \BV) (\R)$ is fixed and we approximate it according to:
{\begin{align*}
u^0_i=\int_{C_i}u_0(x)\d{x},  i\in \Z  \text{ and } u^{\D}_0(x):=\sum_{i\in \mathbb{Z}}\chi_{i}(x)u_i^0. \end{align*}}

Also, we approximate \begin{equation*}
\mathfrak{s}_i=\mathfrak{s}(x_i),  i\in \Z \text{ and } \mathfrak{s}^{\Delta}( x):= \sum_{i\in \mathbb{Z}}\chi_{i}(x)\mathfrak{s}_i.
\end{equation*}

{Further, 
the convolution term $\mu*\overline{\nu}(u)$ is approximated by 
\begin{eqnarray}
\label{eq:conv}
c_{\strut {i+1/2}}^{n}={\Delta x}\sum\limits_{p\in Z} {\mu(x_{{i+1/2}-p})}\overline{\nu}(u^{n}_{p+1/2}) \approx \int\limits_{\R} \mu(x-y)\overline{\nu}(u^{\D}(t,y))\d y, 
\end{eqnarray}
with $u^{n}_{p+1/2}$ being { any} convex combination of $u^{n}_{p}$ and $u^{n}_{p+1}.$ }
We now define a {piecewise constant approximate solution 
\begin{equation}
    u^{\Delta}(t, x):=\sum_{n=0}^N \sum_{i\in \mathbb{Z}}\chi_{i}(x)\chi^n(t) u^n_i,\quad
\end{equation}
 to the IVP \eqref{eq:u}{-\eqref{eq:id}
}through the following marching formula:}
\begin{align}\label{scheme2}.
\begin{split}
     u^{n+1}_i&=
   H(\nu(c^{n}_{{i-1/2}}),\nu(c^{n}_{{i+1/2}}),u_{i-1}^{n},u_i^{n},u_{i+1}^{n})
    \\&:=
   u^{n}_i- \lambda \bigl(
    \mathcal{F} (\nu(c^n_{{i+1/2}}),\mathfrak{s}_{i}u_i^{n},\mathfrak{s}_{i+1}u_{i+1}^{n})
    - 
    \mathcal{F} (\nu(c^n_{{i-1/2}}),\mathfrak{s}_{i-1}u_{i-1}^{n},\mathfrak{s}_{i}u_{i}^{n})
     \bigr).
\end{split}
  \end{align} 
{The numerical flux} $\mathcal{F} (\nu(c^n_{{i+1/2}}),\mathfrak{s}_{i}u_i^{n},\mathfrak{s}_{i+1}u_{i+1}^{n}) 
    $, denotes the numerical approximation of the flux $f(\mathfrak{s}(x)u) \nu(\mu*\overline{\nu}(u))$ at the interface $x=x_{{i+1/2}}$ for $i\in \Z.$ { Henceforth, we denote it through a shorthand notation  $\mathcal{F}^n_{{i+1/2}}(\mathfrak{s}_{i}u_i^{n},\mathfrak{s}_{i+1}u_{i+1}^{n})$.}
We present two examples of $\mathcal{F}$ that are the nonlocal extensions of well-known numerical monotone fluxes for local conservation laws. The other monotone fluxes can be modified in a similar way, as described below:
\begin{enumerate}
    \item \textbf{Lax-Friedrichs type Flux:} 
\begin{align*}
  \mathcal{F}_{LF}(a,b,c)
  & = 
  \displaystyle
  \frac{a}{2}\left( f(b)
    +
    f(c)\right)
  -
  \Theta\frac{(c-b)}{2\, \lambda} \,,\quad \quad \Theta\in\left(0,\displaystyle\frac{2}{3{\norma{\mathfrak{s}}_{\L\infty(\R)}}}\right), {\,a,b,c \in \R,}
\end{align*} where $\Delta t$ is chosen in order to satisfy
the CFL condition
\begin{equation}\label{CFl}
 \lambda \le \displaystyle\frac{\min(1, 4-6\Theta{\norma{\mathfrak{s}}_{\L\infty(\R)}},6\Theta {\norma{\mathfrak{s}}_{\L\infty(\R)}})}{1+6{\norma{\mathfrak{s}}_{\L\infty(\R)}}{\abs{f}_{\lip(\R)}}{\norma{\nu}_{\L\infty(\R)}}}.\end{equation}

\item \textbf{Godunov type flux:}
\begin{align*}
\mathcal{F}_{God}(a,b,c)&= a F_{God}(b,c), \quad { a,b,c \in \R,}\end{align*} 
where $F_{God}$ is the Godunov flux for the corresponding local conservation law $u_t+f(\mathfrak{s}(x)u)_x=0$, and  $\Delta t$ is chosen in order to satisfy
the CFL condition
\begin{align}\label{CFl1}
  \lambda {\norma{\mathfrak{s}}_{\L\infty(\R)}}{\abs{f}_{\lip(\R)}}{\norma{\nu}_{\L\infty(\R)}\le \frac{1}{6}}.
   \end{align}
\end{enumerate}
{
\begin{remark}\label{rem:mono}\normalfont
 For a fixed $n \in  \{0, \ldots, N\},$ the above choices of fluxes make $H$ increasing in the last three arguments, for an arbitrary fixed sequence $\{c_{{i+1/2}}^n\}_{i\in\Z}$. Henceforth, we refer to this property as the ``monotonicity" of the scheme (see \cite[Prop.~2.8]{ACT2015}, \cite[Thm.~5.1]{BGKT2008}). However, it is to be noted that while doing so, the dependence of the nonlocal coefficient on $c^n_{i+1/2}$ on $\{u^n_i\}_{i\in \Z}$ is suppressed. 
\end{remark}}
We present the analysis for  Lax-Friedrichs scheme. The analysis for { the }Godunov scheme can be done on similar lines. Let $\Delta_+z_{i}=z_{i+1}-z_{i}=\Delta_-z_{i+1},$ for any vector $z$. Now, we prove some lemmas on the solutions ${u^\Delta}$ which will lead to the {\color{black} convergence of the finite volume approximation ${u^{\Delta}}$ and hence }existence of weak entropy solution of \eqref{eq:u}.
 \begin{lemma}[Positivity]
  \label{lem:Pos}
Fix a ${u_0} \in (\L1  \cap \BV)
  (\R;{[0,\infty)})$. Then, the approximate solution ${u^\Delta}$
  defined by the {marching formula}~\eqref{scheme2} satisfies, for all , $t \in {\R^+}$ and $x\in \R,$
  \begin{displaymath}
    {u^{\Delta}} (t,x) \geq 0 \,.
  \end{displaymath}
\end{lemma}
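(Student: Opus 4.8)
The plan is to prove positivity by induction on the time level $n$, invoking the monotonicity of the marching operator $H$ established in \Cref{rem:mono} together with the normalization $f(0)=0$ from \textbf{(H1)}.

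First I would dispose of the base case: since $u_0\ge 0$, every initial cell average $u^0_i=\int_{C_i}u_0(x)\,\d x$ is non-negative, so $u^\Delta(0,\cdot)\ge 0$. For the inductive step I would assume $u^n_j\ge 0$ for all $j\in\Z$ and fix an index $i$. Writing the two frozen interface coefficients as $a=\nu(c^n_{i-1/2})$ and $b=\nu(c^n_{i+1/2})$, \Cref{rem:mono} guarantees --- under the CFL restriction \eqref{CFl} --- that $H(a,b,\cdot,\cdot,\cdot)$ is non-decreasing in each of its last three slots. Monotonicity then yields
\begin{align*}
u^{n+1}_i=H\big(a,b,u^n_{i-1},u^n_i,u^n_{i+1}\big)\ge H\big(a,b,0,0,0\big),
\end{align*}
and the proof reduces to checking that the right-hand side vanishes.

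That final check is the only computation, and it is short: for the Lax--Friedrichs flux $\mathcal{F}_{LF}(a,0,0)=\tfrac{a}{2}(f(0)+f(0))-\Theta\tfrac{0}{2\lambda}=0$ because $f(0)=0$ and the diffusive term vanishes when both states are zero; substituting into the marching formula \eqref{scheme2} gives $H(a,b,0,0,0)=0-\lambda(0-0)=0$. Hence $u^{n+1}_i\ge 0$, which closes the induction, and since $u^\Delta$ is piecewise constant with values among the $u^n_i$, positivity holds for all $(t,x)$. The Godunov flux is handled identically, since $\mathcal{F}_{God}(a,0,0)=a\,F_{God}(0,0)=0$, again by $f(0)=0$.

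The one place that warrants care --- rather than a genuine obstacle --- is the use of monotonicity in the presence of the nonlocal coupling. The coefficients $c^n_{i\pm 1/2}$ are functionals of the entire profile $\{u^n_j\}_{j\in\Z}$, so when I replace $(u^n_{i-1},u^n_i,u^n_{i+1})$ by $(0,0,0)$ I must keep $a,b$ frozen and not recompute them from the lowered profile. This is precisely the content of \Cref{rem:mono}, which asserts monotonicity for an arbitrary but fixed sequence $\{c^n_{i+1/2}\}_{i\in\Z}$; since $H(a,b,0,0,0)=0$ for every admissible $(a,b)$, the comparison is legitimate regardless of the actual values of the coefficients.
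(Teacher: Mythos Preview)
Your argument is correct, but it differs from the paper's route. The paper does not invoke \Cref{rem:mono} as a black box; instead it rewrites the scheme in incremental form
\[
u_i^{n+1}=(1-\mathfrak{s}_i a^n_{i-1/2}-\mathfrak{s}_i b^n_{i+1/2})u_i^n+a^n_{i-1/2}\mathfrak{s}_{i-1}u_{i-1}^n+b^n_{i+1/2}\mathfrak{s}_{i+1}u_{i+1}^n-\lambda\big(\mathcal{F}^n_{i+1/2}(\mathfrak{s}_iu_i^n,\mathfrak{s}_iu_i^n)-\mathcal{F}^n_{i-1/2}(\mathfrak{s}_iu_i^n,\mathfrak{s}_iu_i^n)\big),
\]
computes the coefficients $a^n_{i-1/2},b^n_{i+1/2}$ explicitly for the Lax--Friedrichs flux, and uses the CFL condition \eqref{CFl} to show $0\le \mathfrak{s}_i a^n_{i-1/2},\mathfrak{s}_i b^n_{i+1/2}\le 1/3$; the residual consistency term is bounded separately by $\tfrac{1}{3\lambda}u_i^n$ via $f(0)=0$ and $|\nu|\le\|\nu\|_{\L\infty}$. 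Combining these three estimates gives $u_i^{n+1}\ge(2/3-1/3-1/3)u_i^n+\text{(non-negative)}\ge 0$.

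Your approach is shorter and conceptually cleaner --- it is in fact exactly the argument the paper itself uses later for the invariant region principle (Lemma~\ref{lem:invariant}), with $0$ in place of $1$. The paper's explicit incremental decomposition, on the other hand, is not wasted effort: the same coefficients $a^n_{i-1/2},b^n_{i+1/2}$ and their $1/3$ bounds are reused verbatim in the $\L\infty$ estimate (Lemma~\ref{lem:Linfty}) and the $\BV$ estimate (Lemma~\ref{lem:BV}), so the paper is effectively front-loading computations needed downstream. Your remark about freezing the nonlocal coefficients is well taken and is precisely the content of \Cref{rem:mono}.
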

\begin{proof}
{Let $i\in \Z$ and $n  \in  \{0, \ldots, N\}$.} Assume that $\mathfrak{s},u^{n}\ge 0.$ The scheme \eqref{scheme2} can be written { as:}
    \begin{equation*}
  \begin{array}{@{}l@{\,}c@{\,}l@{\,}c@{\,}c@{\,}c@{\,}c@{\,}c@{\,}c@{}}
u_{i}^{n+1}
     = 
 u_{i}^{n} &-& \lambda  \bigl[
   \mathcal{F}^{n}_{{i+1/2}} (\mathfrak{s}_iu_{i}^{n},\mathfrak{s}_{i+1}u_{i+1}^{n}){\color{black}-\mathcal{F}^{n}_{{i+1/2}} (\mathfrak{s}_iu_{i}^{n},\mathfrak{s}_iu_{i}^{n})}\\[2mm]
    & &{\color{black}+\mathcal{F}^{n}_{{i+1/2}} (\mathfrak{s}_iu_{i}^{n},\mathfrak{s}_iu_{i}^{n})}{\color{black}- \mathcal{F}^{n}_{{i-1/2}} (\mathfrak{s}_iu_{i}^{n},\mathfrak{s}_iu_{i}^{n})}\\[2mm]
    & &{\color{black}+\mathcal{F}^{n}_{{i-1/2}} (\mathfrak{s}_iu_{i}^{n},\mathfrak{s}_iu_{i}^{n})}-\mathcal{F}^{n}_{{i-1/2}} (\mathfrak{s}_{i-1}u_{i-1}^{n},\mathfrak{s}_iu_{i}^{n}) \bigr],
  \end{array}
\end{equation*}
which { can be written in the following incremental form}:
 \begin{equation} \label{scheme21}
    \begin{array}{rcr}
     u_{i}^{n+1}
=
      u_{i}^{n}
      -a_{i-1/2}^n \,  \Delta_-(\mathfrak{s}_iu_i^{n})
      + b_{i+1/2}^n \,  \Delta_+(\mathfrak{s}_iu_i^{n}
      )-
      \lambda \left(
        {\color{black} \mathcal{F}^{n}_{{i+1/2}} (\mathfrak{s}_iu_{i}^{n},\mathfrak{s}_iu_{i}^{n})}
        -
          {\color{black}\mathcal{F}^{n}_{{i-1/2}} (\mathfrak{s}_iu_{i}^{n},\mathfrak{s}_iu_{i}^{n})}
      \right),
    \end{array}
  \end{equation}
  where
  \begin{align*}
   a_{i-1/2}^n
    & = 
    \lambda \,
    \frac{
       {\color{black}\mathcal{F}^{n}_{{i-1/2}} (\mathfrak{s}_iu_{i}^{n},\mathfrak{s}_iu_{i}^{n})}
      -
     \mathcal{F}^{n}_{{i-1/2}} (\mathfrak{s}_{i-1}u_{i-1}^{n},\mathfrak{s}_iu_{i}^{n})}{\Delta_-(\mathfrak{s}_iu_i^{n})} \,,\\
    b_{i+1/2}^n
    & = 
    \lambda \,
    \frac{
       {\color{black}\mathcal{F}^{n}_{{i+1/2}} (\mathfrak{s}_iu_{i}^{n},\mathfrak{s}_{i+1}u_{i+1}^{n})}
      -
      \mathcal{F}^{n}_{{i+1/2}} (\mathfrak{s}_iu_{i}^{n},\mathfrak{s}_iu_{i}^{n})}{\Delta_+(\mathfrak{s}_iu_i^{n})}\,.
  \end{align*}
Indeed,
  \begin{align*}
a^{n}_{{i-1/2}}
    & = 
    \lambda\frac{\nu(c^{n}_{{i-1/2}})}{\Delta_-(\mathfrak{s}_iu_i^{n})}
    \left(
      f(\mathfrak{s}_iu_{i}^{n})
      -\frac{
        f(\mathfrak{s}_{i-1}u_{i-1}^{n})
        +
       f(\mathfrak{s}_iu_{i}^{n})
      }{2}
    \right)
    +\frac{\Theta}{2}
    \\
     & = 
    \lambda \frac{ \nu({c_{{i-1/2}}^{n}})}{{\Delta_-{(\mathfrak{s}_{i}u_i^n)}}}
   \left(
    \frac{\Delta_-{(f(\mathfrak{s}_{i}u_i^n))}}{2}\right)+\displaystyle\frac{\Theta}{2}
    \\
    & = 
   \frac{1}{2}\nu({c_{{i-1/2}}^{n}})
   \left(
    \lambda f' (\xi_{{{i-1/2}}}^{n})\right)+\displaystyle\frac{\Theta}{2},
  \end{align*}where { the last equality follows from the mean value theorem with }$\xi_{{i-1/2}}^{n}\in I(\mathfrak{s}_{i-1}u_{i-1}^n,\mathfrak{s}_iu_i^n).$\\ 
  Hence, using the CFL condition
   {cf.}~\eqref{CFl},
  \begin{align}
  \begin{split}\label{eq:bounda}
    \mathfrak{s}_{i}a^{n}_{{i-1/2}}
    &\geq
    \norma{\mathfrak{s}}_{\L\infty(\R)}\frac{-\lambda \norma{\nu}_{\L\infty(\R)}\abs{f}_{\lip(\R)} + \Theta}{2}
    \geq
    0,
    \\
\mathfrak{s}_{i}a^{n}_{{i-1/2}}
    &\leq
\norma{\mathfrak{s}}_{\L\infty(\R)}\frac{\lambda \norma{\nu}_{\L\infty(\R)}\abs{f}_{\lip(\R)} + \Theta}{2}
    \leq
    \frac{1}{3}. 
    \end{split}
  \end{align}
 Entirely similar computations hold for the term
$b^{n}_{{i+1/2}}$ { and hence
$\mathfrak{s}_{i}a_{i+1/2}^n$ and $\mathfrak{s}_{i}b_{i+1/2}^n$ are non negative with $1-\mathfrak{s}_{i}a_{i-1/2}^n-\mathfrak{s}_{i}
   b_{i+1/2}^n \geq 0.$}
  Further, since $\mathfrak{s}_{i}u_i^n>0,$ and {$f(0)=0,$}
  \begin{align*}
    &
    \modulo{ \mathcal{F}^{n}_{{i+1/2}} (\mathfrak{s}_iu_{i}^{n},\mathfrak{s}_iu_{i}^{n})
      -
      \mathcal{F}^{n}_{{i-1/2}} (\mathfrak{s}_iu_{i}^{n},\mathfrak{s}_iu_{i}^{n})}
    \\
    & = 
f(\mathfrak{s}_iu_i^n)\left|
     \nu(c_{i+1/2}^n)
      -
      \nu(c_{i-1/2}^n)
\right| \leq 
    2\abs{f}_{\lip(\R)}\mathfrak{s}_iu_i^n\norma{\nu}_{\L\infty(\R)},
  \end{align*}    
which under the choice of $\Delta t,$  according to \eqref{CFl}, implies:
  \begin{equation}    \modulo{\mathcal{F}^{n}_{{i+1/2}} (\mathfrak{s}_iu_{i}^{n},\mathfrak{s}_iu_{i}^{n})
      -
      \mathcal{F}^{n}_{{i-1/2}} (\mathfrak{s}_iu_{i}^{n},\mathfrak{s}_iu_{i}^{n})}\leq
    \frac{1}{3\lambda}u_i^n. \label{eq:boundF}
  \end{equation}
  Using~\eqref{eq:bounda}-\eqref{eq:boundF} in~\eqref{scheme2} we
  get:
  \begin{align}
  \label{2.9}
  \begin{split}
  u_{i}^{n+1}
    & = 
    (1-\mathfrak{s}_{i}a^{n}_{{i-1/2}} -\mathfrak{s}_{i} b^{n}_{{i+1/2}}) u_{i}^{n}
    +
 a^{n}_{{i-1/2}} \mathfrak{s}_{i-1}u_{i-1}^{n}
    +
 b^{n}_{{i+1/2}} \mathfrak{s}_{i+1}u_{i+1}^{n}
    \\
    & 
    \quad -
    \lambda   \left(
      \mathcal{F}^{n}_{{i+1/2}} (\mathfrak{s}_iu_{i}^{n},\mathfrak{s}_iu_{i}^{n})
      -
      \mathcal{F}^{n}_{{i-1/2}} (\mathfrak{s}_iu_{i}^{n},\mathfrak{s}_iu_{i}^{n})
    \right) \end{split}
    \\ \nonumber
    & \geq 
    \left(\frac{2}{3}-\mathfrak{s}_{i}a^{n}_{{i-1/2}} - \mathfrak{s}_{i}b^{n}_{{i+1/2}}   \right) u_{i}^{n}
    +
    a^{n}_{{i-1/2}} \mathfrak{s}_{i-1}u_{i-1}^{n}
    +
    b^{n}_{{i+1/2}} \mathfrak{s}_{i+1}u_{i+1}^{n}
     \geq 
    0,
    \end{align}
  {which implies the desired result by employing induction.} 
\end{proof}
The following lemma is an easy consequence of positivity of ${u^{\Delta}}$.
\begin{lemma}[$\L1$ bound]
  \label{lem:L1}
  Let \eqref{CFl}
  hold. Fix an initial datum ${u_0} \in (\L1  \cap \BV)
  (\R;{[0,\infty)})$. Then, the approximate solution ${u^\Delta}$
  defined by the {marching formula}~\eqref{scheme2} satisfies, for $t \in {\R^+}$,
  \begin{displaymath}
    {\norma{ {u^{\Delta}} (t)}_{\L1(\R)}} = {\norma{ {u^{\Delta}} (0)}_{\L1(\R)}} \,.
  \end{displaymath}
\end{lemma}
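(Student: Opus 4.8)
The plan is to read off the claim from the conservation (flux-difference) form of the marching formula~\eqref{scheme2} together with the positivity already secured in \cref{lem:Pos}. Since that lemma gives $u^n_i\ge 0$ for every $i\in\Z$ and every admissible $n$, the absolute values in the $\L1$ norm may be dropped: for $t\in[t^n,t^{n+1})$ the approximation $u^\Delta$ is constant in time and
\[
\norma{u^{\Delta}(t)}_{\L1(\R)}=\D x\sum_{i\in\Z}\abs{u^n_i}=\D x\sum_{i\in\Z}u^n_i .
\]
Hence it suffices to prove the discrete mass identity $\sum_{i\in\Z}u^{n+1}_i=\sum_{i\in\Z}u^n_i$ and then proceed by induction on $n$, starting from $\norma{u^\Delta(0)}_{\L1(\R)}=\D x\sum_{i\in\Z} u^0_i=\norma{u_0}_{\L1(\R)}$.

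To obtain the mass identity I would sum \eqref{scheme2} over $i\in\Z$. The terms $u^n_i$ reproduce $\sum_i u^n_i$, while the flux increments $\mathcal{F}^n_{i+1/2}-\mathcal{F}^n_{i-1/2}$ telescope, leaving only the contributions as $i\to\pm\infty$. The one point requiring care is that these boundary contributions vanish. This follows because the incremental form used in the proof of \cref{lem:Pos} writes $u^{n+1}_i$ as a combination of $u^n_{i-1},u^n_i,u^n_{i+1}$, so summability of $(u^n_i)_{i\in\Z}$ is propagated from the initial datum $u_0\in\L1(\R)$ and $u^n_i\to 0$ as $\abs{i}\to\infty$; since $f(0)=0$ by \textbf{(H1)}, the numerical flux evaluated at vanishing arguments tends to $0$, so the telescoped tails are zero and $\sum_i u^{n+1}_i=\sum_i u^n_i$.

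Multiplying by $\D x$ and using positivity once more gives $\norma{u^\Delta(t^{n+1})}_{\L1(\R)}=\norma{u^\Delta(t^n)}_{\L1(\R)}$, and the induction closes the argument for all grid times; constancy of $u^\Delta$ on each interval $[t^n,t^{n+1})$ then yields the identity for every $t\in\R^+$. The only genuine obstacle is the rigorous justification of the vanishing boundary flux terms, i.e.\ controlling the $\ell^1$ tails of $(u^n_i)_{i\in\Z}$ uniformly in $n$; once the incremental representation from \cref{lem:Pos} is invoked this becomes routine, which is precisely why the result is an easy consequence of positivity.
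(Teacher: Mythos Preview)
Your proposal is correct and is precisely the standard computation the paper alludes to: the paper's own proof simply reads ``The proof follows by a standard computation, see for example \cite[Lem.~2.4]{ACT2015}'', and what you have written---drop absolute values by positivity, sum~\eqref{scheme2} in $i$, telescope the flux differences, and use $f(0)=0$ together with the $\ell^1$ decay of the tails to kill the boundary terms---is exactly that argument.
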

\begin{proof}
{The proof follows by a standard computation, see for example \cite[Lem.~2.4]{ACT2015}.}
\end{proof}
To establish the $\L\infty$ bounds, we need the following lemma on the convolution terms.
 \begin{lemma}
  \label{lem:AB}
The convolution terms { \eqref{eq:conv} }satisfy
  \begin{align}
\label{eq:c_i+1/2}\begin{split}
    |c^n_{{i+1/2}} - c^n_{{i-1/2}}|
    &\leq 
    \mathcal{K}_1\Delta x, \,\\
    |c^n_{{i+3/2}} -2c^n_{{i+1/2}}+ c^n_{{i-1/2}}|
    &\leq 
 \mathcal{K}_2\, \Delta x^{2},
  \end{split}
    \end{align}
    where $$\mathcal{K}_1={\norma{\overline{\nu}'}_{\L\infty(\R)}}{{\norma{\mu'}_{\L\infty(\R)}} \norma{ {u^{\Delta}}(t^n)}_{\L1(\R)}}  \text{ and }\mathcal{K}_2=2{\norma{\overline{\nu}'}_{\L\infty(\R)}}{\norma{\mu^{''}}_{\L\infty(\R)}}
  {\norma{ {u^{\Delta}}(t^n)}_{\L1(\R)}}.$$
  \end{lemma}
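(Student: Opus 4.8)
The plan is to estimate both discrete differences directly from the definition \eqref{eq:conv}, exploiting that the $\mu$-weights are sampled at the equally spaced points $x_{i+1/2-p}=(i+1/2-p)\Delta x$, so that consecutive weights differ by a shift of exactly $\Delta x$. Writing
\[
c^n_{i+1/2}-c^n_{i-1/2}=\Delta x\sum_{p\in\Z}\bigl(\mu(x_{i+1/2-p})-\mu(x_{i-1/2-p})\bigr)\overline{\nu}(u^n_{p+1/2}),
\]
each bracket is a first difference of $\mu$ between two points a distance $\Delta x$ apart; by the mean value theorem and \textbf{(H1)} it is bounded by $\norma{\mu'}_{\L\infty(\R)}\Delta x$.

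First I would control the $\overline{\nu}$-factor: since $\overline{\nu}(0)=0$ and $\overline{\nu}\in W^{2,\infty}(\R)$, and since \Cref{lem:Pos} gives $u^n_p\ge 0$ (hence $u^n_{p+1/2}\ge 0$), we have $\abs{\overline{\nu}(u^n_{p+1/2})}\le\norma{\overline{\nu}'}_{\L\infty(\R)}\,u^n_{p+1/2}$. Second I would note that, because $u^n_{p+1/2}$ is a convex combination of $u^n_p$ and $u^n_{p+1}$ with $p$-independent weights, a reindexing gives the crucial identity $\Delta x\sum_{p\in\Z}u^n_{p+1/2}=\Delta x\sum_{p\in\Z}u^n_p=\norma{u^\Delta(t^n)}_{\L1(\R)}$, which is finite by \Cref{lem:L1}. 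Combining the three estimates yields the first inequality with $\mathcal{K}_1=\norma{\overline{\nu}'}_{\L\infty(\R)}\norma{\mu'}_{\L\infty(\R)}\norma{u^\Delta(t^n)}_{\L1(\R)}$.

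For the second inequality I would proceed identically, with the first difference of $\mu$ replaced by the centered second difference $\mu(x_{i+3/2-p})-2\mu(x_{i+1/2-p})+\mu(x_{i-1/2-p})$. Writing this as the difference of two consecutive first differences and applying the mean value theorem twice (once to each first difference, once to the resulting slopes, whose arguments lie within distance $2\Delta x$ of one another) bounds it by $2\norma{\mu''}_{\L\infty(\R)}\Delta x^2$; the same $\overline{\nu}$-bound and summation identity then give the claim with $\mathcal{K}_2=2\norma{\overline{\nu}'}_{\L\infty(\R)}\norma{\mu''}_{\L\infty(\R)}\norma{u^\Delta(t^n)}_{\L1(\R)}$.

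No step here is genuinely hard; the estimates are routine once the two structural facts are in place. The only point requiring care---and the mild obstacle---is the summation identity $\Delta x\sum_p u^n_{p+1/2}=\norma{u^\Delta(t^n)}_{\L1(\R)}$: it relies both on the nonnegativity supplied by \Cref{lem:Pos} (so that passing to absolute values loses nothing) and on the convex-combination weights being independent of $p$, which is exactly what prevents a spurious factor of $2$ in $\mathcal{K}_1$. One should also confirm that the defining series for $c^n_{i\pm 1/2}$ converges absolutely---again immediate from $u^\Delta(t^n)\in\L1(\R)$ together with the boundedness of $\mu$ and $\overline{\nu}'$---so that the rearrangements above are legitimate.
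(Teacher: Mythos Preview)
Your proposal is correct and follows essentially the same route as the paper: both pull the difference inside the sum, bound $|\mu(x_{i+1/2-p})-\mu(x_{i-1/2-p})|$ by $\norma{\mu'}_{\L\infty(\R)}\Delta x$ (the paper via the integral representation, you via the mean value theorem---equivalent here), use $|\overline{\nu}(u)|\le\norma{\overline{\nu}'}_{\L\infty(\R)}|u|$, and then sum; the paper defers the second-difference estimate to a reference, while you spell it out. Your extra care about the summation identity and the $p$-independence of the convex weights is warranted---the paper is silent on this point---but the argument is otherwise the same.
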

  \begin{proof}
  {Let $i\in \Z$ and $n  \in  \{0, \ldots, N\}$}. Consider,
 \begin{align*}
    & 
   \abs{c^{n}_{{i+1/2}} - c^{n}_{{i-1/2}}}
    \\
     & \leq 
     \Delta x \sum_{l\in \interi}
    \abs{\mu(x_{{i+1/2-l}}) - \mu(x_{{i-1/2}-l})}\abs{ \,
     \overline{\nu}(u^n _{l+1/2})}\\
     & \leq 
    \Delta x \sum_{l\in \interi}\abs{
     \overline{\nu}(u^n _{l+1/2})}\int_{x_{{i-1/2}-l}}^{x_{{i+1/2-l}}}\abs{\mu'(s)}\d s\\
    & \leq \,  \Delta x{\norma{\overline{\nu}'}_{\L\infty(\R)}}{\norma{\mu'}_{\L\infty(\R)}}{\norma{ {u^{\Delta}}(t^n)}_{\L1(\R)}} 
    \end{align*}
    \begin{align*}
    & 
   \abs{c^{n}_{{i+3/2}} - 2c^{n}_{{i+1/2}}+c^{n}_{{i+1/2}}}
    \\
     & \leq 
     \Delta x \sum_{l\in \interi}
    \abs{\mu(x_{{i+1/2-l}}) - \mu(x_{{i-1/2}-l})}\abs{ \,
     \overline{\nu}(u^n _{l+1/2})}\\
     & \leq 
    \Delta x \sum_{l\in \interi}\abs{
     \overline{\nu}(u^n _{l+1/2})}\int_{x_{{i-1/2}-l}}^{x_{{i+1/2-l}}}\abs{\mu'(s)}\d s\\
    & \leq \,  \Delta x{\norma{\overline{\nu}'}_{\L\infty(\R)}}{\norma{\mu'}_{\L\infty(\R)}}{\norma{ {u^{\Delta}}(t^n)}_{\L1(\R)}}   
    \end{align*}
    The remaining inequality also follows similarly, see \cite[Proposition~2.8]{ACT2015}.
  \end{proof}
We now establish the $\L\infty$ bounds.
  \begin{lemma}[$\L\infty$ bound]
  \label{lem:Linfty}
  Let~~\eqref{CFl}
  hold. Fix an initial datum ${u_0} \in (\L1  \cap \BV)
  (\R;{[0,\infty)})$. Then, the approximate solution ${u^\Delta}$
  defined by the {marching formula}~\eqref{scheme2} satisfies for all $t \in
  {\R^+}$,
  \begin{equation}
    \label{eq:8}
    \norma{ {u^{\Delta}} (t)}_{\L\infty(\R)}
    \leq
    \mathcal{K}_4\exp(\mathcal{K}_3 \, t ) \,
    \norma{{u^{\Delta}(0)}}_{\L\infty(\R)} \, ,
  \end{equation}
  with {\begin{equation*}
 \mathcal{K}_3
    =
    \mathcal{K}_1\Delta t\abs{f}_{\lip(\R)}\norma{\mathfrak{s}}_{\L\infty(\R)}
     { \norma{\nu'}_{\L\infty(\R)}},  \,\, \mathcal{K}_4=\frac{\norma{\mathfrak{s}}_{\L\infty(\R)}}{{ \inf\limits_{x\in \R}\mathfrak{s}(x)}}. 
  \end{equation*}}
\end{lemma}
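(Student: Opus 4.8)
The plan is to prove the estimate first for the rescaled grid function $\overline{u}^n_i:=\mathfrak{s}_iu^n_i$ and then to transfer it back to $u^\Delta$, the passage back being precisely what generates the prefactor $\mathcal{K}_4=\norma{\mathfrak{s}}_{\L\infty(\R)}/\inf_{x\in\R}\mathfrak{s}(x)$. Since $\mathfrak{s}_i\in[\inf_{x\in\R}\mathfrak{s}(x),\norma{\mathfrak{s}}_{\L\infty(\R)}]$, one has $\inf_{x\in\R}\mathfrak{s}(x)\,\abs{u^n_i}\le\abs{\overline{u}^n_i}\le\norma{\mathfrak{s}}_{\L\infty(\R)}\abs{u^n_i}$, so a bound of the form $\norma{\overline{u}^\Delta(t)}_{\L\infty(\R)}\le\exp(\mathcal{K}_3t)\norma{\overline{u}^\Delta(0)}_{\L\infty(\R)}$ immediately yields the claimed inequality for $u^\Delta$.

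To obtain the bound for $\overline{u}$, I would start from the convex-combination form \eqref{2.9} of the scheme. Multiplying \eqref{2.9} by $\mathfrak{s}_i$ and recognising $\mathfrak{s}_{i\pm1}u^n_{i\pm1}=\overline{u}^n_{i\pm1}$ gives
\begin{align*}
\overline{u}^{n+1}_i&=\left(1-\mathfrak{s}_ia^n_{{i-1/2}}-\mathfrak{s}_ib^n_{{i+1/2}}\right)\overline{u}^n_i+\mathfrak{s}_ia^n_{{i-1/2}}\overline{u}^n_{i-1}+\mathfrak{s}_ib^n_{{i+1/2}}\overline{u}^n_{i+1}\\
&\quad-\lambda\mathfrak{s}_if(\overline{u}^n_i)\left(\nu(c^n_{{i+1/2}})-\nu(c^n_{{i-1/2}})\right),
\end{align*}
where the last term uses the identity $\mathcal{F}^n_{{i+1/2}}(\mathfrak{s}_iu^n_i,\mathfrak{s}_iu^n_i)=\nu(c^n_{{i+1/2}})f(\mathfrak{s}_iu^n_i)$ already exploited in \eqref{eq:boundF}. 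By the coefficient estimates \eqref{eq:bounda} established in the proof of Lemma~\ref{lem:Pos}, the three weights $1-\mathfrak{s}_ia^n_{{i-1/2}}-\mathfrak{s}_ib^n_{{i+1/2}}$, $\mathfrak{s}_ia^n_{{i-1/2}}$ and $\mathfrak{s}_ib^n_{{i+1/2}}$ are non-negative and sum to $1$, so the first three terms form a convex combination of $\overline{u}^n_{i-1},\overline{u}^n_i,\overline{u}^n_{i+1}$ and are bounded in modulus by $\norma{\overline{u}^\Delta(t^n)}_{\L\infty(\R)}$.

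It remains to control the source term. Using $\abs{f(\overline{u}^n_i)}=\abs{f(\overline{u}^n_i)-f(0)}\le\abs{f}_{\lip(\R)}\abs{\overline{u}^n_i}$, the Lipschitz bound $\abs{\nu(c^n_{{i+1/2}})-\nu(c^n_{{i-1/2}})}\le\norma{\nu'}_{\L\infty(\R)}\abs{c^n_{{i+1/2}}-c^n_{{i-1/2}}}$, the first estimate in \eqref{eq:c_i+1/2} from Lemma~\ref{lem:AB}, and the identity $\lambda\Delta x=\Delta t$, the source term is bounded by $\mathcal{K}_3\abs{\overline{u}^n_i}$. Hence $\norma{\overline{u}^\Delta(t^{n+1})}_{\L\infty(\R)}\le(1+\mathcal{K}_3)\norma{\overline{u}^\Delta(t^n)}_{\L\infty(\R)}$, and iterating this discrete Gronwall inequality together with $1+x\le e^x$ produces the exponential growth factor appearing in \eqref{eq:8}. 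Transferring back to $u^\Delta$ as in the first paragraph then gives the assertion.

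The main obstacle is structural rather than computational: because the discontinuous weight $\mathfrak{s}$ enters at the neighbouring nodes, the raw convex form \eqref{2.9} is a convex combination of $\mathfrak{s}_{i\pm1}u^n_{i\pm1}$ rather than of the $u^n_{i\pm1}$, so $u^\Delta$ itself does not obey a maximum principle and no uniform bound can be read off directly. Passing to $\overline{u}=\mathfrak{s}u$ is exactly the device that restores the convex-combination (maximum-principle) structure, at the cost of the ratio $\mathcal{K}_4$ on returning to $u$. A secondary difficulty, absent in the local constant-coefficient setting, is that the source term does not vanish: it is driven by the spatial variation of the nonlocal speed $\nu(c^n_{{i+1/2}})$, and taming it is precisely what requires the equi-Lipschitz-in-$x$ control of the discrete convolution furnished by Lemma~\ref{lem:AB}, through which the constant $\mathcal{K}_1$—and hence $\mathcal{K}_3$—enters the exponential rate.
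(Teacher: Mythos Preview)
Your proposal is correct and follows essentially the same route as the paper's own proof: multiply the incremental form \eqref{2.9} by $\mathfrak{s}_i$, exploit the non-negative coefficients \eqref{eq:bounda} to treat the first three terms as a convex combination of the $\mathfrak{s}_\bullet u^n_\bullet$, bound the residual source term via Lemma~\ref{lem:AB} and the Lipschitz continuity of $f$ and $\nu$, iterate the resulting one-step growth factor, and finally divide by $\inf_x\mathfrak{s}(x)$ to recover the bound on $u^\Delta$. Your explanation of why one must pass to $\overline{u}=\mathfrak{s}u$ (the raw scheme is not a convex combination of the $u^n_{i\pm1}$ themselves) is exactly the structural point underlying the argument.
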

\begin{proof}
{Let $i\in \Z$ and $n  \in  \{0, \ldots, N\}$. Using Lemma~\ref{lem:AB}}, we obtain:
  \begin{align}
    & 
    \mathcal{F}^{n}_{{i+1/2}} (\mathfrak{s}_iu_i^{n},\mathfrak{s}_iu_i^{n})
    -
    \mathcal{F}^{n}_{{i-1/2}} (\mathfrak{s}_iu_i^{n},\mathfrak{s}_iu_i^{n})
    \nonumber
    \\
    & = 
  f(\mathfrak{s}_iu_i^{n})\left(\nu(c_{{i+1/2}}^n)
    -
    \nu(c_{{i-1/2}}^n)\right)
    \nonumber
    \\
    & \leq 
   \abs{f}_{\lip(\R)}\norma{\mathfrak{s}}_{\L\infty(\R)}u_{i}^{n}
     { \norma{\nu'}_{\L\infty(\R)}}     \mathcal{K}_1 \Delta x .
    \label{eq:bound1}
  \end{align}
  Now, using Lemma \ref{lem:L1}, \eqref{eq:bound1} in~\eqref{2.9}, we get
  \begin{align*}
    \modulo{ \mathfrak{s}_iu_{i}^{n+1}}
    & \leq 
    (1-\mathfrak{s}_{i}a^{n}_{{i-1/2}}-\mathfrak{s}_{i}b^{n}_{{i+1/2}} ) \mathfrak{s}_iu_i^{n}
    +
    \mathfrak{s}_i a^{n}_{{i-1/2}} \mathfrak{s}_{i-1}u_{i-1}^{n}
    \\
    & 
    \quad
    +
     \mathfrak{s}_ib^{n}_{{i+1/2}} \mathfrak{s}_{i+1}u_{i+1}^{n}
    +
     \mathfrak{s}_i\abs{f}_{\lip(\R)}\norma{\mathfrak{s}}_{\L\infty(\R)}u_{i}^{n}
     { \norma{\nu'}_{\L\infty(\R)}}     \mathcal{K}_1 \Delta t
    \\
    & \leq 
    (1-\mathfrak{s}_{i}a^{n}_{{i-1/2}}-\mathfrak{s}_{i}b^{n}_{{i+1/2}}) \norma{ \mathfrak{s}{u^{\Delta}}(t^n)}_{\L\infty(\R)}
    +
     \mathfrak{s}_ia^{n}_{{i-1/2}} \norma{ \mathfrak{s}{u^{\Delta}}(t^n)}_{\L\infty(\R)}
    \\
    & 
    \quad
    +
    \mathfrak{s}_{i}b^{n}_{{i+1/2}} {\norma{\mathfrak{s}{u^{\Delta}}(t^n)}_{\L\infty(\R)}}
    + {\abs{f}_{\lip(\R)}}\norma{\mathfrak{s}}_{\L\infty(\R)}
     { \norma{\nu'}_{\L\infty(\R)}}     \mathcal{K}_1 \Delta t{\norma{\mathfrak{s}{u^{\Delta}}(t^n)}_{\L\infty(\R)}}
    \\
    & \leq 
    {\norma{\mathfrak{s}{u^{\Delta}}(t^n)}_{\L\infty(\R)}}
    \left(
      1
      +
      \mathcal{K}_1\Delta t\abs{f}_{\lip(\R)}\norma{\mathfrak{s}}_{\L\infty(\R)}
    { \norma{\nu'}_{\L\infty(\R)}} 
    \right)\,.
  \end{align*}
{Consequently, with $\mathcal{K}_3=\mathcal{K}_1\abs{f}_{\lip(\R)}\norma{\mathfrak{s}}_{\L\infty(\R)}
    { \norma{\nu'}_{\L\infty(\R)}}$, we have}
  \begin{align*}
    \norma{\mathfrak{s}{u^{\Delta}}(t^{n+1})}_{\L\infty(\R)}
   &\leq
    \left(
      1
      +
    \mathcal{K}_3 \Delta t
    \right) {\norma{\mathfrak{s}{u^{\Delta}}(t^n)}_{\L\infty(\R)}} \,
\\&\le \exp(\mathcal{K}_3\Delta t){\norma{\mathfrak{s}{u^{\Delta}}(t^n)}_{\L\infty(\R)}}\\
&\le \exp(\mathcal{K}_3(n+1)\Delta t){\norma{\mathfrak{s}}_{\L\infty(\R)}}\norma{{u^{\Delta}(0)}}_{\L\infty(\R)},
 \end{align*}
{\color{black} so that for $0 \leq n \leq N,$} 
  \begin{displaymath}  \norma{{u^{\Delta}}(t^n)}_{\L\infty(\R)}
    \leq
    \frac{\norma{\mathfrak{s}}_{\L\infty(\R)}}{{\inf\limits_{x\in \R}\mathfrak{s}(x)}}{\norma{{u^{\Delta}(0)}}_{\L\infty(\R)}}
    \exp(\mathcal{K}_3t^n)
  \end{displaymath}
  completing the proof.
  \end{proof}

  {
  
      In general, the maximum principle does not hold for nonlocal conservation laws with discontinuous flux, i.e., $\norma{u(t,)}_{L^{\infty}(\R)} \nleq \norma{u_0}_{L^{\infty}(\R)}.$ However, if the flux function in addition satisfies $f(0)=f(1)=0$ and $ \mathfrak{s} \geq 1,$ then the entropy solutions indeed exhibit the `invariant region' principle. More precisely we have the following lemma.
  \begin{lemma}[Invariant region principle]\label{lem:invariant}
      In addition to the hypothesis of the Lemma~\ref{lem:Linfty}, assume that the flux function satisfies $f(0)=f(1)=0$ and $\mathfrak{s}\geq 1,$ then $0\leq u_0 \leq 1$ implies $0 \leq u(t,\cdot) \leq 1,$ for $t\geq 0.$
  \end{lemma}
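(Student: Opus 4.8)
The plan is to prove the two‑sided bound at the level of the finite‑volume approximation $u^{\Delta}$ and then transfer it to the entropy solution: once the convergence of $u^{\Delta}$ to the unique entropy solution $u$ is established in this section, any pointwise bound that is stable under the passage $\Delta x,\Delta t\downarrow 0$ is inherited by $u$. The lower bound $u\ge 0$ is already supplied by \cref{lem:Pos}, so the entire task reduces to the upper bound. For that I would not track $u$ directly but the \emph{effective density} $\overline u^{\,n}_i:=\mathfrak{s}_i u^n_i$, since the marching formula \eqref{scheme2} linearises cleanly in this variable and since $\mathfrak{s}\ge 1$ makes the bound $\overline u\le 1$ \emph{stronger} than $u\le 1$ (indeed $u=\overline u/\mathfrak{s}\le\overline u$).

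Multiplying the incremental form \eqref{scheme21} by $\mathfrak{s}_i$ and using the identity $\mathcal F^n_{i+1/2}(\mathfrak s_i u^n_i,\mathfrak s_i u^n_i)-\mathcal F^n_{i-1/2}(\mathfrak s_i u^n_i,\mathfrak s_i u^n_i)=f(\overline u^{\,n}_i)\bigl(\nu(c^n_{i+1/2})-\nu(c^n_{i-1/2})\bigr)$, I obtain
\begin{align*}
\overline u^{\,n+1}_i &= (1-\mathfrak s_i a^n_{i-1/2}-\mathfrak s_i b^n_{i+1/2})\,\overline u^{\,n}_i+\mathfrak s_i a^n_{i-1/2}\,\overline u^{\,n}_{i-1}+\mathfrak s_i b^n_{i+1/2}\,\overline u^{\,n}_{i+1}\\
&\quad-\lambda\,\mathfrak s_i\,f(\overline u^{\,n}_i)\bigl(\nu(c^n_{i+1/2})-\nu(c^n_{i-1/2})\bigr).
\end{align*}
By the CFL estimates \eqref{eq:bounda} the first three coefficients are nonnegative and sum to $1$, so they form a genuine convex combination of $\overline u^{\,n}_{i-1},\overline u^{\,n}_i,\overline u^{\,n}_{i+1}$. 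The decisive structural fact, which is exactly where $f(0)=f(1)=0$ enters, is that the two constant states $\overline u\equiv 0$ and $\overline u\equiv 1$ (equivalently the discrete stationary profiles $u_j\equiv 0$ and $u_j=1/\mathfrak s_j$) are \emph{exact} fixed points of the scheme: the convex combination returns the common value and the residual term vanishes because $f(\overline u^{\,n}_i)\in\{f(0),f(1)\}=\{0\}$. Crucially, this vanishing does not see the nonlocal weights $\nu(c^n_{i\pm1/2})$ at all, since those multiply the factor $f$ which is already zero.

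With this in hand I would run a discrete comparison argument by induction on $n$, taking as hypothesis $0\le\mathfrak s_j u^n_j\le 1$ for all $j$. Using the monotonicity of $H$ in its last three arguments (\cref{rem:mono}), I sandwich the update for $u^{n+1}_i$ between the scheme evaluated at the all‑zero state and at the profile $1/\mathfrak s$, both of which are reproduced by the previous paragraph; this yields $0\le u^{n+1}_i\le 1/\mathfrak s_i\le 1$ and, multiplying by $\mathfrak s_i$, keeps $\overline u$ in $[0,1]$ so the induction closes. Dividing by $\mathfrak s_i\ge 1$ gives the claimed bound $0\le u^\Delta\le 1$, which survives the limit. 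The main obstacle is precisely the nonlocal coupling: when one compares $u^{\Delta}$ against a barrier, the convolution coefficients $c^n_{i\pm1/2}$ attached to the two states differ, so the comparison map is \emph{not} a single monotone operator and the textbook invariant‑region proof does not apply verbatim. What rescues the argument is the observation above, that at the barrier states the numerical flux is the product of the vanishing factor $f(\overline u)\in\{f(0),f(1)\}$ with the weight, so the mismatch in $\nu(c)$ is annihilated; carefully verifying this — and checking the analogous statement for the Godunov flux, where $f(0)=f(1)=0$ forces $F_{God}(0,0)=F_{God}(1,1)=0$ — is the only delicate point.
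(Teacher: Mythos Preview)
Your proposal is correct and follows essentially the same route as the paper: you track $\overline u_i^{\,n}=\mathfrak s_i u_i^n$, observe that the barrier state $\overline u\equiv 1$ is a fixed point of the marching formula for \emph{any} choice of the nonlocal weights because $f(1)=0$ (the paper writes this as $\tilde H(a,b,1,1,1)=1$ for all $a,b$), and then use the frozen--coefficient monotonicity of \cref{rem:mono} together with induction to conclude $\mathfrak s_i u_i^{n+1}\le 1$, hence $u_i^{n+1}\le 1/\mathfrak s_i\le 1$. Your discussion of the ``main obstacle'' is the right diagnosis and the right cure; the paper simply phrases it more compactly by noting that the fixed--point identity holds uniformly in the first two arguments, so the comparison never needs two different sets of convolution coefficients.
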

  \begin{proof}
  In view of  Lemma \ref{lem:Pos}, enough to show that $u(t,\dott) \leq 1$ for all $t\geq 0.$
  From the marching formula \eqref{scheme2} we have
  \begin{align}.
\begin{split}
     \mathfrak{s}_iu^{n+1}_i&=
   \mathfrak{s}_iu^{n}_i- \lambda \mathfrak{s}_i\bigl(
    \mathcal{F} (\nu(c^n_{{i+1/2}}),\mathfrak{s}_{i}u_i^{n},\mathfrak{s}_{i+1}u_{i+1}^{n})
    - 
    \mathcal{F} (\nu(c^n_{{i-1/2}}),\mathfrak{s}_{i-1}u_{i-1}^{n},\mathfrak{s}_{i}u_{i}^{n})
     \bigr)\\
&:=
     \tilde{H}(\nu(c^{n}_{{i-1/2}}),\nu(c^{n}_{{i+1/2}}),\mathfrak{s}_{i-1}u_{i-1}^{n},\mathfrak{s}_{i}u_i^{n},\mathfrak{s}_{i+1}u_{i+1}^{n}).
\end{split}
  \end{align} 
Note that  $\tilde{H}(a,b,1,1,1)=1,$ for all $a,b \in \R.$  
Furthermore, since the marching operator $H$ is monotone (see \eqref{scheme2} and Remark~\ref{rem:mono}), the operator $\tilde{H}$ is increasing in its last three arguments. Consequently, by inductive argument we have:
\begin{align*}
\mathfrak{s}_iu_i^{n+1}&:= \tilde{H}(\nu(c^{n}_{{i-1/2}}),\nu(c^{n}_{{i+1/2}}),\mathfrak{s}_{i-1}u_{i-1}^{n},\mathfrak{s}_{i}u_i^{n},\mathfrak{s}_{i+1}u_{i+1}^{n}) \leq \tilde{H}(\nu(c^{n}_{{i-1/2}}),\nu(c^{n}_{{i+1/2}}),1,1,1)=1,
\end{align*}
which implies the lemma.
  \end{proof}
  }
    {Now, we establish uniform $\BV$ bounds on the solutions $u^{\Delta},$ which in general, do not hold for local conservation laws with discontinuous flux (see \cite{AGDV2011, GHO2015,GTV2020} and references therein for details). The special form of the flux i.e., $F(x,u)=f(\mathfrak{s}(x)u)$ enables us to obtain such BV bounds  even though the spatial discontinuities can be infinitely many with possible accumulation points.}    
\begin{lemma}\label{lem:BV}($\BV$ estimate):
Let ~\eqref{CFl} hold. Fix an initial datum ${u_0} \in (\L1  \cap \BV)
  (\R;{[0,\infty)})$. Then, the approximate solution
  ${u^\Delta}$ defined by the {marching formula}~\eqref{scheme2} satisfies for all
  $n$, for all $t \in \left(t^n, t^{n+1}\right),$ 
    \begin{align}    
    \begin{split}
    \label{BV:u}
    \sum_{i\in\mathbb{Z}}
        \modulo{\Delta_+{u_i^{n}}} 
          &\leq \frac{1}{\inf\limits_{x\in \R}\mathfrak{s}(x)}\bigg(
     \exp(\mathcal{K}_5t)\sum_{i\in\Z}\abs{\Delta_+{(\mathfrak{s}_{i}u_{i}^{0})}}  + \frac{\exp(\mathcal{K}_5t)-1}{\mathcal{K}_5}\mathcal{K}_6
      \\& \qquad \qquad \qquad + \mathcal{K}_4\exp(\mathcal{K}_3 \, t ) \abs{\mathfrak{s}}_{\BV(\R)} \norma{u^{\Delta}(0)}_{L^{\infty}(\R)}\bigg),
     \end{split}
    \end{align}
    where \begin{align*}
\mathcal{K}_5&=\mathcal{K}_1{\norma{\mathfrak{s}}}_{\L\infty({\mathbb{R}})} \abs{f}_{\lip(\R)}  { \norma{\nu'}_{\L\infty(\R)}},\\
\mathcal{K}_6&=\mathcal{K}_1{\abs{\mathfrak{s}}_{\BV(\R)}}
\norma{\mathfrak{s}}_{\L\infty({\mathbb{R}})}\abs{f}_{\lip(\R)}  { \norma{\nu'}_{\L\infty(\R)}}\norma{u^{n}}_{\L\infty}+\mathcal{K}_2{\norma{\mathfrak{s}}}_{\L\infty({\mathbb{R}})}\abs{f}_{\lip(\R)}{\norma{\nu'}_{\L\infty(\R)}} \norma{\mathfrak{s}u^{n}}_{\L1}\\
&\quad{+2\mathcal{K}^2_1\norma{\mathfrak{s}}_{\L\infty(\R)}\abs{f}_{\lip(\R)}\abs{\nu'}_{\lipR}\norma{\mathfrak{s}u^{n}}_{\L1}}.
      \end{align*}
    \end{lemma}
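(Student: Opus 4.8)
The plan is to prove the estimate not for $u^\Delta$ directly but for the rescaled quantity $\overline{u}_i^n:=\mathfrak{s}_iu_i^n$, the discrete analogue of $\overline{u}=\mathfrak{s}u$ from \S\ref{unique}, and only at the very end to transfer the bound back to $u$ using $\inf_{x}\mathfrak{s}(x)>0$. The reason is structural: because the local flux is $f(\mathfrak{s}(x)u)$, it depends on $x$ only through the product $\overline{u}$, so it is the total variation of $\overline{u}^\Delta$ that is transported by the scheme, whereas that of $u^\Delta$ need not be. Concretely, I would multiply the incremental form \eqref{scheme21} by $\mathfrak{s}_i$ and use $\mathcal{F}^{n}_{i\pm1/2}(\overline{u}_i^n,\overline{u}_i^n)=\nu(c^n_{i\pm1/2})f(\overline{u}_i^n)$ to obtain
\begin{align*}
\overline{u}_i^{n+1}&=\bigl(1-\mathfrak{s}_ia^n_{i-1/2}-\mathfrak{s}_ib^n_{i+1/2}\bigr)\overline{u}_i^n\\
&\quad+\mathfrak{s}_ia^n_{i-1/2}\,\overline{u}_{i-1}^n+\mathfrak{s}_ib^n_{i+1/2}\,\overline{u}_{i+1}^n-G_i^n,
\end{align*}
where $G_i^n:=\lambda\mathfrak{s}_if(\overline{u}_i^n)\bigl(\nu(c^n_{i+1/2})-\nu(c^n_{i-1/2})\bigr)$ isolates the variation of the nonlocal weight. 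By \eqref{eq:bounda} and the CFL condition \eqref{CFl}, the coefficients $\mathfrak{s}_ia^n_{i-1/2}$ and $\mathfrak{s}_ib^n_{i+1/2}$ are non-negative with sum at most $2/3$, so apart from $G_i^n$ this is a genuine monotone convex update for $\overline{u}$.

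Next I would take the forward difference and, using $\Delta_-\overline{u}_{i+1}^n=\Delta_+\overline{u}_i^n$, regroup it as
\begin{align*}
\Delta_+\overline{u}_i^{n+1}&=\mathfrak{s}_ia^n_{i-1/2}\,\Delta_+\overline{u}_{i-1}^n+\bigl(1-\mathfrak{s}_ib^n_{i+1/2}-\mathfrak{s}_{i+1}a^n_{i+1/2}\bigr)\Delta_+\overline{u}_i^n\\
&\quad+\mathfrak{s}_{i+1}b^n_{i+3/2}\,\Delta_+\overline{u}_{i+1}^n-\Delta_+G_i^n.
\end{align*}
The crucial point of this arrangement is that each of the three coefficients of the $\Delta_+\overline{u}^n$ terms is non-negative (each of $\mathfrak{s}_ib^n_{i+1/2}$ and $\mathfrak{s}_{i+1}a^n_{i+1/2}$ is at most $1/3$), and that after passing to absolute values and summing over $i\in\Z$, a single relabelling of the three resulting sums makes the coefficient of every $\abs{\Delta_+\overline{u}_j^n}$ collapse to exactly $1$. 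Thus the transport/diffusion part is \emph{exactly} total-variation non-increasing, and the entire growth of the total variation is confined to the source term:
\[
\sum_{i\in\Z}\abs{\Delta_+\overline{u}_i^{n+1}}\le\sum_{i\in\Z}\abs{\Delta_+\overline{u}_i^n}+\sum_{i\in\Z}\abs{\Delta_+G_i^n}.
\]

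The heart of the proof, and the step I expect to be the main obstacle, is controlling $\sum_i\abs{\Delta_+G_i^n}$. Writing $\Delta_+G_i^n=\lambda\bigl(X_{i+1}\Delta_+Y_i+(\Delta_+X_i)Y_i\bigr)$ with $X_i=\mathfrak{s}_if(\overline{u}_i^n)$ and $Y_i=\nu(c^n_{i+1/2})-\nu(c^n_{i-1/2})$, I would split into three contributions. The piece of $(\Delta_+X_i)Y_i$ carrying $\Delta_+f(\overline{u}_i^n)$ reproduces, after summation, the factor $\mathcal{K}_5\Delta t\sum_i\abs{\Delta_+\overline{u}_i^n}$; the piece carrying $\Delta_+\mathfrak{s}_i$ produces the $\abs{\mathfrak{s}}_{\BV(\R)}$ contribution to $\mathcal{K}_6$ via the $\L\infty$ bound of Lemma~\ref{lem:Linfty}. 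The delicate piece is $X_{i+1}\Delta_+Y_i$, in which $\Delta_+Y_i$ is a \emph{second} difference of $\nu(c^n)$: decomposing $\Delta_+Y_i=\nu'(\zeta_1)\bigl(c^n_{i+3/2}-2c^n_{i+1/2}+c^n_{i-1/2}\bigr)+\bigl(\nu'(\zeta_1)-\nu'(\zeta_2)\bigr)\bigl(c^n_{i+1/2}-c^n_{i-1/2}\bigr)$, the first summand is $O(\Delta x^2)$ by the second estimate of Lemma~\ref{lem:AB} (giving the $\mathcal{K}_2$ term), while the second is also $O(\Delta x^2)$ precisely because $\nu'\in\lipR$ and the first differences of $c^n$ are $O(\Delta x)$ by the first estimate of Lemma~\ref{lem:AB} (giving the quadratic $\mathcal{K}_1^2$ term). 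One then absorbs the surplus $\Delta x$ against $\lambda=\Delta t/\Delta x$ and against the conserved discrete $\L1$ mass (Lemma~\ref{lem:L1}), arriving at $\sum_i\abs{\Delta_+G_i^n}\le\mathcal{K}_5\Delta t\sum_i\abs{\Delta_+\overline{u}_i^n}+\mathcal{K}_6\Delta t$, with $\mathcal{K}_6$ uniformly finite by Lemmas~\ref{lem:L1} and~\ref{lem:Linfty}.

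Finally I would close in two steps. Combining the last two displays gives the recursion $\sum_i\abs{\Delta_+\overline{u}_i^{n+1}}\le(1+\mathcal{K}_5\Delta t)\sum_i\abs{\Delta_+\overline{u}_i^n}+\mathcal{K}_6\Delta t$, and discrete Gronwall (using $(1+\mathcal{K}_5\Delta t)^n\le\exp(\mathcal{K}_5t^n)$) yields $\sum_i\abs{\Delta_+\overline{u}_i^n}\le\exp(\mathcal{K}_5t^n)\sum_i\abs{\Delta_+(\mathfrak{s}_iu_i^0)}+\tfrac{\exp(\mathcal{K}_5t^n)-1}{\mathcal{K}_5}\mathcal{K}_6$, i.e.\ the first two terms inside \eqref{BV:u}. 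To return to $u$ I would use the identity
\[
\Delta_+u_i^n=\frac{1}{\mathfrak{s}_{i+1}}\Delta_+\overline{u}_i^n-\frac{u_i^n}{\mathfrak{s}_{i+1}}\Delta_+\mathfrak{s}_i,
\]
so that $\sum_i\abs{\Delta_+u_i^n}\le\frac{1}{\inf_x\mathfrak{s}(x)}\bigl(\sum_i\abs{\Delta_+\overline{u}_i^n}+\norma{u^\Delta(t^n)}_{\L\infty(\R)}\abs{\mathfrak{s}}_{\BV(\R)}\bigr)$; inserting the Gronwall bound and the $\L\infty$ estimate $\norma{u^\Delta(t^n)}_{\L\infty(\R)}\le\mathcal{K}_4\exp(\mathcal{K}_3t)\norma{u^\Delta(0)}_{\L\infty(\R)}$ of Lemma~\ref{lem:Linfty} produces exactly \eqref{BV:u} at $t=t^n$, and hence for every $t\in(t^n,t^{n+1})$ since the right-hand side is nondecreasing in $t$. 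The genuinely non-routine ingredients are the exact cancellation in the transport coefficients and the $O(\Delta x^2)$ control of the second difference of the convolution; together they are what keep the roughness of $\mathfrak{s}$ from destroying the uniform $\BV$ bound.
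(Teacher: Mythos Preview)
Your proposal is correct and follows essentially the same approach as the paper's proof: work with $\overline{u}_i^n=\mathfrak{s}_iu_i^n$, split $\Delta_+\overline{u}_i^{n+1}$ into a transport part with non-negative coefficients summing to $1$ (the paper's $A_i$) and a nonlocal source part (the paper's $B_i+C_i$, your $-\Delta_+G_i^n$), estimate the latter via Lemma~\ref{lem:AB} and the product-rule decomposition you describe, apply discrete Gronwall, and finally pass back to $u$ using $\inf_x\mathfrak{s}(x)>0$ together with the $\L\infty$ bound. Your presentation is slightly more conceptual in emphasising the exact cancellation of the transport coefficients, but the steps and estimates coincide with the paper's.
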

    \begin{proof}
    {Let $i\in \Z$ and $n  \in  \{0, \ldots, N\}$.} Using { the incremental form} \eqref{scheme21}, we have, 
   \begin{align*}
     \mathfrak{s}_iu_{i}^{n+1}&= 
      \mathfrak{s}_iu_i^{n}
      -
       \mathfrak{s}_ia_{i-1/2}^n \,  \Delta_-(\mathfrak{s}_iu_i^{n})
      +
      \mathfrak{s}_i b_{i+1/2}^n \,  \Delta_+(\mathfrak{s}_iu_i^{n})
\\&\quad \quad \quad-
      \lambda  \mathfrak{s}_i\left(
        {\color{black}\mathcal{F}^{n}_{{i+1/2}} (\mathfrak{s}_iu_{i}^{n},\mathfrak{s}_iu_{i}^{n})}
        -
          {\color{black}\mathcal{F}^{n}_{{i-1/2}} (\mathfrak{s}_iu_{i}^{n},\mathfrak{s}_iu_{i}^{n})}
      \right) \,
  \\
 \mathfrak{s}_{i+1}u^{n+1}_{i+1}&= 
     \mathfrak{s}_{i+1}u^{n+1}_{i+1}
      -\mathfrak{s}_{i+1}
      a_{i+1/2}^n \, \Delta_+(\mathfrak{s}_iu_i^{n})
      +  \mathfrak{s}_{i+1}
      b_{i+3/2}^n \, \Delta_+ {(\mathfrak{s}_{i+1}u_{i+1}^{n})}
   \\ &\quad \quad \quad -
      \lambda   \mathfrak{s}_{i+1}\left(
        {\color{black}\mathcal{F}^{n}_{{i+3/2}} (\mathfrak{s}_{i+1}u_{i+1}^{n},\mathfrak{s}_{i+1}u_{i+1}^{n})}
        -
          {\color{black}\mathcal{F}^{n}_{{i+1/2}}(\mathfrak{s}_{i+1}u_{i+1}^{n},\mathfrak{s}_{i+1}u_{i+1}^{n})}
      \right) \,.
     \end{align*}
Subtracting the above equations we get
   \begin{align}
     &\Delta_+(\mathfrak{s}_iu_i^{n+1})\nonumber
     \\\nonumber
     & = \Delta_+(\mathfrak{s}_iu_i^{n})  -\mathfrak{s}_{i+1}
      a_{i+1/2}^n \, \Delta_+(\mathfrak{s}_iu_i^{n})+
        \mathfrak{s}_ia_{i-1/2}^n \, \Delta_-(\mathfrak{s}_iu_i^{n})
      +  \mathfrak{s}_{i+1}
      b_{i+3/2}^n \, \Delta_+ {(\mathfrak{s}_{i+1}u_{i+1}^{n})}\\
      &\nonumber
      \quad- \mathfrak{s}_ib_{i+1/2}^n \, \Delta_+(\mathfrak{s}_iu_i^{n})
        -\mathfrak{s}_{i+1}\lambda \left(
        \mathcal{F}^{n}_{{i+3/2}} (\mathfrak{s}_{i+1}u_{i+1}^{n},\mathfrak{s}_{i+1}u_{i+1}^{n})
        -
          {\color{black}\mathcal{F}^{n}_{{i+1/2}}(\mathfrak{s}_{i+1}u_{i+1}^{n},\mathfrak{s}_{i+1}u_{i+1}^{n})}
      \right) \\&\nonumber \quad +\lambda  \mathfrak{s}_i \left(
        {\color{black}\mathcal{F}^{n}_{{i+1/2}} (\mathfrak{s}_iu_{i}^{n},\mathfrak{s}_iu_{i}^{n})}
        -
          {\color{black}\mathcal{F}^{n}_{{i-1/2}} (\mathfrak{s}_iu_{i}^{n},\mathfrak{s}_iu_{i}^{n})}
      \right) \\ \label{ABC}&:=A_i+B_i+C_i,\end{align}
      where
      \begin{align}
      A_i&=\Delta_+(\mathfrak{s}_iu_i^{n}) \left(1-\mathfrak{s}_{i+1}
      a_{i+1/2}^n-\mathfrak{s}_{i}
      b_{i+1/2}^n\right)+\mathfrak{s}_{i}
      a_{i-1/2}^n\Delta_-(\mathfrak{s}_iu_i^{n}) +\mathfrak{s}_{i+1}
      b_{i+3/2}^n\Delta_+(\mathfrak{s}_{i+1}u_{i+1}^{n})\label{1} \\
       \label{2}B_i&=-\mathfrak{s}_{i+1}\lambda \left(
        {\color{black}\mathcal{F}^{n}_{{i+3/2}} (\mathfrak{s}_{i+1}u_{i+1}^{n},\mathfrak{s}_{i+1}u_{i+1}^{n})}
        -
          {\color{black}\mathcal{F}^{n}_{{i+1/2}}(\mathfrak{s}_{i+1}u_{i+1}^{n},\mathfrak{s}_{i+1}u_{i+1}^{n})}
      \right) \\\label{3}C_i&=+\lambda  \mathfrak{s}_i \left(
        {\color{black}\mathcal{F}^{n}_{{i+1/2}} (\mathfrak{s}_iu_{i}^{n},\mathfrak{s}_iu_{i}^{n})}
        -
          {\color{black}\mathcal{F}^{n}_{{i-1/2}} (\mathfrak{s}_iu_{i}^{n},\mathfrak{s}_iu_{i}^{n})}
      \right).
  \end{align}
   Using $0\le   \mathfrak{s}_{i}a^{n}_{{i-1/2}},\, \mathfrak{s}_{i}b^{n}_{{i+1/2}}\le 1/3$ {(cf.~\eqref{eq:bounda})}, we have 
    \begin{equation}\label{A}
        \sum_{i\in\Z}\modulo{A_i}\le\sum_{i\in\Z}\modulo{\Delta_+\mathfrak{s}_iu_i^{n}} .
    \end{equation}
   {Further,}
    \begin{align}\nonumber
   &\frac{B_i+C_i}{\lambda}
 \\&=-
      \mathfrak{s}_{i+1} \left(
        {\color{black}\mathcal{F}^{n}_{{i+3/2}} (\mathfrak{s}_{i+1}u^{n}_{i+1},\mathfrak{s}_{i+1}u^{n}_{i+1})}
        -
          {\color{black}\mathcal{F}^{n}_{{i+1/2}} (\mathfrak{s}_{i+1}u^{n}_{i+1},\mathfrak{s}_{i+1}u^{n}_{i+1})}
      \right) \nonumber\\
      &\quad+\mathfrak{s}_{i}\left(
        {\color{black}\mathcal{F}^{n}_{{i+1/2}} (\mathfrak{s}_{i}u^{n}_{i},\mathfrak{s}_{i}u^{n}_{i})}
        -
          {\color{black}\mathcal{F}^{n}_{{i-1/2}} (\mathfrak{s}_{i}u^{n}_{i},\mathfrak{s}_{i}u^{n}_{i})}
      \right)\nonumber\\
   &=   -   \mathfrak{s}_{i+1}f(\mathfrak{s}_{i+1}u^{n}_{i+1})\left(
 \nu(c^{n}_{{i+3/2}})-
         \nu(c^{n}_{{i+1/2}})
      \right) +
  \mathfrak{s}_{i}f(\mathfrak{s}_{i}u^{n}_{i} )\left(
 \nu(c^{n}_{{i+1/2}})-
         \nu(c^{n}_{{i-1/2}})
      \right)\nonumber\\
      &=  - \mathfrak{s}_{i+1}f(\mathfrak{s}_{i+1}u^{n}_{i+1})\left(
 \nu(c^{n}_{{i+3/2}})-
         \nu(c^{n}_{{i+1/2}})
      \right) +  \mathfrak{s}_{i}f(\mathfrak{s}_{i}u^{n}_{i})\left(
 \nu(c^{n}_{{i+3/2}})-
         \nu(c^{n}_{{i+1/2}})
      \right)\nonumber\\&\quad-   \mathfrak{s}_{i}f(\mathfrak{s}_{i}u^{n}_{i})\left(
 \nu(c^{n}_{{i+3/2}})-
         \nu(c^{n}_{{i+1/2}})
      \right)+
\mathfrak{s}_{i}f(\mathfrak{s}_{i}u^{n}_{i})\left(
 \nu(c^{n}_{{i+1/2}})-
         \nu(c^{n}_{{i-1/2}})
      \right)\nonumber\\
       &=  \nonumber  
      - (\mathfrak{s}_{i+1}f(\mathfrak{s}_{i+1}u^{n}_{i+1})- \mathfrak{s}_{i}f(\mathfrak{s}_{i}u^{n}_{i}))\left(
 \nu(c^{n}_{{i+3/2}})-
         \nu(c^{n}_{{i+1/2}})
      \right)\\
      \nonumber&\quad-  \mathfrak{s}_{i}f(\mathfrak{s}_{i}u^{n}_{i})\left(
 \nu(c^{n}_{{i+3/2}})-2
         \nu(c^{n}_{{i+1/2}})+
         \nu(c^{n}_{{i-1/2}})
      \right).
    \end{align}
Using { $f(0)=0,$} we have
    \begin{align*}
   |f(\mathfrak{s}_{i}u^{n}_{i})|
    & \leq  \abs{f}_{\lip(\R)}\norma{\mathfrak{s}}_{\L\infty(\R)}|u_i^n|,\\
\modulo{\mathfrak{s}_{i+1}f(u^{n}_{i+1}\mathfrak{s}_{i+1})-\mathfrak{s}_{i}f(\mathfrak{s}_{i}u^{n}_{i})}
&=\modulo{\mathfrak{s}_{i+1}f(u^{n}_{i+1}\mathfrak{s}_{i+1})-\mathfrak{s}_{i+1}f(u^{n}_{i}\mathfrak{s}_{i})+\mathfrak{s}_{i+1}f(u^{n}_{i}\mathfrak{s}_{i})-\mathfrak{s}_{i}f(\mathfrak{s}_{i}u^{n}_{i})}\\
&\le\modulo{f(u^{n}_{i+1}\mathfrak{s}_{i+1})-f(u^{n}_{i}\mathfrak{s}_{i})}\mathfrak{s}_{i+1}+f(u^{n}_{i}\mathfrak{s}_{i})\modulo{\mathfrak{s}_{i+1}-\mathfrak{s}_{i}}\\   &\le \abs{f}_{\lip(\R)}\modulo{\Delta_+(\mathfrak{s}_{i}u_{i}^{n})} \mathfrak{s}_{i+1}+\abs{f}_{\lip(\R)}\mathfrak{s}_{i}u_{i}^{n}\modulo{\mathfrak{s}_{i+1}-\mathfrak{s}_{i}}.
 \end{align*}Finally, using the above estimate in addition to Lemma~\ref{lem:AB} and { mean value theorem on $\nu$ and $\nu'$}, we have
\begin{align*}
  &|{B_i+C_i}|\\
  &\le\lambda \modulo{\mathfrak{s}_{i+1}f(\mathfrak{s}_{i+1}u^{n}_{i+1})-\mathfrak{s}_{i}f(\mathfrak{s}_{i}u^{n}_{i})}{\norma{\nu'}_{\L\infty(\R)}}\modulo{c^{n}_{{i+3/2}}-
         c^{n}_{{i+1/2}}}\\
         &\quad +
      \lambda \mathfrak{s}_{i}\abs{f}_{\lip(\R)}\norma{\mathfrak{s}}_{\L\infty(\R)}|u_i^n|{\norma{\nu'}_{\L\infty(\R)}}\modulo{c^{n}_{{i+3/2}}-
         2c^{n}_{{i+1/2}}+c^{n}_{{i-1/2}}}\\
         &{\quad+  \lambda \mathfrak{s}_{i}\abs{f}_{\lip(\R)}\norma{\mathfrak{s}}_{\L\infty(\R)}|u_i^n|\abs{\nu'}_{\lipR}{\abs{c^{n}_{i+1/2}-c^{n}_{i-1/2}}}\left({\abs{c^{n}_{i+1/2}-c^{n}_{i-1/2}}}+{\abs{c^{n}_{i+3/2}-c^{n}_{i+1/2}}}\right)}
        \\
         &\le  \lambda \norma{\nu'}_{\L\infty(\R)}(\abs{f}_{\lip(\R)}\modulo{\Delta_+(\mathfrak{s}_{i}u_{i}^{n})} \mathfrak{s}_{i+1}) \mathcal{K}_1\Delta x    \\&\quad +\lambda {\norma{\nu'}_{\L\infty(\R)}}(\abs{f}_{\lip(\R)}\mathfrak{s}_{i}u_{i}^{n}\modulo{\mathfrak{s}_{i+1}-\mathfrak{s}_{i}} ) \mathcal{K}_1\Delta x\\&\quad +
       \lambda \mathfrak{s}_{i}\abs{f}_{\lip(\R)}\norma{\mathfrak{s}}_{\L\infty(\R)}|u_i^n|{\norma{\nu'}_{\L\infty(\R)}} \mathcal{K}_2\Delta x^2\\
         &{\quad+  2\lambda \mathfrak{s}_{i}\abs{f}_{\lip(\R)}\norma{\mathfrak{s}}_{\L\infty(\R)}|u_i^n|\abs{\nu'}_{\lipR}\mathcal{K}^2_1\Delta x^2.}   \end{align*}
Now, summing up  $|{B_i+C_i}|$ over all $i$,
        \begin{align}
   \nn \sum_{i\in\Z} |{B_i+C_i}| &\le \mathcal{K}_1\Delta t {\norma{\mathfrak{s}}}_{\L\infty({\mathbb{R}})} \abs{f}_{\lip(\R)}  { \norma{\nu'}_{\L\infty(\R)}}\sum_{i\in\Z}\abs{\Delta_+(\mathfrak{s_i}u_i^{n})}\\ \nn 
   &\quad+\mathcal{K}_1\Delta t     {\abs{\mathfrak{s}}_{\BV(\R)}}\abs{f}_{\lip(\R)}  { \norma{\nu'}_{\L\infty(\R)}}\norma{\mathfrak{s}}_{\L\infty({\mathbb{R}})}\norma{u^{n}}_{\L\infty}\\ \nn 
      &\quad+ \mathcal{K}_2\Delta t {\norma{\mathfrak{s}}}_{\L\infty({\mathbb{R}})}\abs{f}_{\lip(\R)}{\norma{\nu'}_{\L\infty(\R)}} \norma{\mathfrak{s}u^{n}}_{\L1}\\ \nn
      &{\quad+ 2\mathcal{K}^2_1\Delta t\norma{\mathfrak{s}}_{\L\infty(\R)}\abs{f}_{\lip(\R)}\abs{\nu'}_{\lipR}\norma{\mathfrak{s}u^{n}}_{\L1} }\\  \label{BC}  
       &=\mathcal{K}_5\Delta t     \sum_{i\in\Z}|\Delta_+(\mathfrak{s_i}u_i^{n})| 
       + \mathcal{K}_6\Delta t,
       \end{align}
      where  \begin{align*}
\mathcal{K}_5&=\mathcal{K}_1{\norma{\mathfrak{s}}}_{\L\infty({\mathbb{R}})} \abs{f}_{\lip(\R)}  { \norma{\nu'}_{\L\infty(\R)}},\\
\mathcal{K}_6&=\mathcal{K}_1{\abs{\mathfrak{s}}_{\BV(\R)}}\abs{f}_{\lip(\R)}  { \norma{\nu'}_{\L\infty(\R)}}\norma{\mathfrak{s}}_{\L\infty({\mathbb{R}})}\norma{u^{n}}_{\L\infty}+\mathcal{K}_2{\norma{\mathfrak{s}}}_{\L\infty({\mathbb{R}})}\abs{f}_{\lip(\R)}{\norma{\nu'}_{\L\infty(\R)}} \norma{\mathfrak{s}u^{n}}_{\L1}\\
&\quad{+2\mathcal{K}^2_1\norma{\mathfrak{s}}_{\L\infty(\R)}\abs{f}_{\lip(\R)}\abs{\nu'}_{\lipR}\mathcal{K}^2_1\norma{\mathfrak{s}u^{n}}_{\L1}}.
      \end{align*}
        { Combining the estimates \eqref{ABC}, \eqref{A} and \eqref{BC}, we get}
    {    \begin{align*}
 \sum_{i\in\Z}\abs{\Delta_+(\mathfrak{s}_{i}u_{i}^{n+1})}&\le  \sum_{i\in\Z}\abs{\Delta_+(\mathfrak{s_i}u_i^{n})}   (1+\mathcal{K}_5\Delta t  )  
     + \mathcal{K}_6\Delta t\\
&\leq\sum_{i\in\Z}\abs{\Delta_+(\mathfrak{s_i}u_i^{n-1})}   (1+\mathcal{K}_5\Delta t  )^2  
     + \mathcal{K}_6\Delta t\left(1+(1+\mathcal{K}_5\Delta t  ) \right) 
     \\&\le (\sum_{i\in\Z}\abs{\Delta_+(\mathfrak{s_i}u_i^{n-2})}   (1+\mathcal{K}_5\Delta t  )  
     + \mathcal{K}_6\Delta t)  (1+\mathcal{K}_5\Delta t  )^2  
     + \mathcal{K}_6\Delta t\left(1+(1+\mathcal{K}_5\Delta t  ) \right) 
\\&=\sum_{i\in\Z}\abs{\Delta_+(\mathfrak{s_i}u_i^{n-2})}   (1+\mathcal{K}_5\Delta t  )^3   
     + \mathcal{K}_6\Delta t\sum_{k=0}^{2}\left(1+\mathcal{K}_5\Delta t\right)^k.
        \end{align*}}
           {Proceeding like this, we have,
        \begin{align}\nn
    \sum_{i\in\Z}\abs{\Delta_+(\mathfrak{s}_{i}u_{i}^{n})}&\le  \sum_{i\in\Z}\abs{\Delta_+(\mathfrak{s_i}u_i^{0})}   (1+\mathcal{K}_5\Delta t  )^{n} 
     + \mathcal{K}_6\Delta t\sum_{k=0}^{n}\left(1+\mathcal{K}_5\Delta t\right)^k\\ \nn 
     &\le \sum_{i\in\Z}\abs{\Delta_+(\mathfrak{s_i}u_i^{0})}(1+\mathcal{K}_5\Delta t)^{n}
     + \mathcal{K}_6\Delta t\frac{1-(1+\mathcal{K}_5\Delta t)^{n}}{1-(1+\mathcal{K}_5\Delta t)}\\ \nn 
     &= \sum_{i\in\Z}\abs{\Delta_+(\mathfrak{s_i}u_i^{0})}(1+\mathcal{K}_5\Delta t  )^{n}
     + \frac{\mathcal{K}_6}{\mathcal{K}_5}((1+\mathcal{K}_5\Delta t)^{n}-1)\\ \label{bv:su}
     &\le \sum_{i\in\Z}\abs{\Delta_+(\mathfrak{s_i}u_i^{0})}\exp(\mathcal{K}_5 t^n)
     + \frac{\mathcal{K}_6}{\mathcal{K}_5}(\exp(\mathcal{K}_5 t^n)-1).
        \end{align}}
{Finally, note that \begin{align*}
        \abs{\Delta_+u_i^n} \leq \frac{1}{\inf\limits_{x\in \R}s(x)} \left( \abs{\Delta_+(\mathfrak{s}_{i}u_i^{n})}+
 \abs{u_i^n}\abs{\Delta_+\mathfrak{s}_{i}}\right),
 \end{align*}
 consequently, we have
 \begin{align}\label{bv:u1}
     \sum\limits_{i\in \Z }\abs{\Delta_+u_i^n}\leq \frac{1}{\inf\limits_{x\in \R}s(x)} \left( \sum\limits_{i\in \Z } \abs{\Delta_+(\mathfrak{s}_{i}u_{i}^{n})}+ \mathcal{K}_4\exp(\mathcal{K}_3 \, t ) \,
    \norma{{u^{\Delta}(0)}}_{\L\infty(\R)}
\abs{\mathfrak{s}}_{\BV(\R)}\right).
 \end{align} Now, the lemma follows by substituting the BV estimate \eqref{bv:su} in \eqref{bv:u1}.}
      \end{proof}
\begin{lemma}[Time estimate]\label{lem:L1t}
For $m>n {\in \{0, \ldots, N\}},$ we have the following time estimate:
\begin{eqnarray*}
\Delta x\sum\limits_{i\in \mathbb{Z}} \abs{u_i^{m}-u_i^{n}} \leq \mathcal{K}_7 \Delta t (m-n),
\end{eqnarray*}
where $\mathcal{K}_7$ depends on 
  $\sum_{i\in\mathbb{Z}}
        \modulo{\Delta_+\mathfrak{s}_{i}u_i^n} ,{\norma{\nu'}_{\L\infty(\R)}},\norma{\mathfrak{s}u^n}_{\L\infty}, \abs{\mathfrak{s}}_{\BV(\R)},  \norma{u_0}_{\L1(\R)}$ and $\mu$.
\end{lemma}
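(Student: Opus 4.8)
The plan is to reduce the many-step difference to a single marching step and then to bound the spatial total variation of the numerical flux uniformly in the mesh. First I would telescope by the triangle inequality,
\[
\Delta x\sum_{i\in\Z}\abs{u_i^{m}-u_i^{n}}\le \sum_{k=n}^{m-1}\Delta x\sum_{i\in\Z}\abs{u_i^{k+1}-u_i^{k}},
\]
so that it suffices to bound each single-step term by $\mathcal{K}_7\Delta t$. Using the marching formula~\eqref{scheme2} together with $\lambda=\Delta t/\Delta x$, one step is exactly a discrete flux difference,
\[
\Delta x\sum_{i\in\Z}\abs{u_i^{k+1}-u_i^{k}}=\Delta t\sum_{i\in\Z}\abs{\mathcal{F}^{k}_{i+1/2}-\mathcal{F}^{k}_{i-1/2}},
\]
and the whole problem reduces to showing that the spatial variation $\sum_{i\in\Z}\abs{\mathcal{F}^{k}_{i+1/2}-\mathcal{F}^{k}_{i-1/2}}$ of the numerical flux is bounded by a constant independent of $\Delta x,\Delta t$.

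To estimate this variation I would split the Lax--Friedrichs flux into its convective part $\nu(c^k_{i+1/2})\tfrac12\bigl(f(\mathfrak{s}_iu_i^k)+f(\mathfrak{s}_{i+1}u_{i+1}^k)\bigr)$ and its numerical-viscosity part $-\tfrac{\Theta}{2\lambda}\Delta_+(\mathfrak{s}_iu_i^k)$. For the convective part I add and subtract $\nu(c^k_{i-1/2})$ times the averaged flux so as to separate the variation of the nonlocal weight from that of $f(\mathfrak{s}u^k)$. The flux-variation sum is then controlled by $\abs{f}_{\lip(\R)}\sum_{i\in\Z}\abs{\Delta_+(\mathfrak{s}_iu_i^k)}$, i.e.\ by the flux total variation furnished by Lemma~\ref{lem:BV}; the weight-variation sum is controlled by $\abs{f}_{\lip(\R)}\norma{\mathfrak{s}u^k}_{\L\infty(\R)}\norma{\nu'}_{\L\infty(\R)}\sum_{i\in\Z}\abs{c^k_{i+1/2}-c^k_{i-1/2}}$. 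The numerical-viscosity difference is a second difference of $\mathfrak{s}u^k$, whose $\ell^1$ sum is at most $2\sum_{i\in\Z}\abs{\Delta_+(\mathfrak{s}_iu_i^k)}$, again bounded by Lemma~\ref{lem:BV}; its prefactor $\Theta/\lambda$ is a fixed scheme parameter and is simply absorbed into $\mathcal{K}_7$.

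The hard part is the nonlocal weight term, namely proving that $\sum_{i\in\Z}\abs{c^k_{i+1/2}-c^k_{i-1/2}}$ stays bounded independently of $\Delta x$. The pointwise bound of Lemma~\ref{lem:AB} is \emph{not} summable over $i\in\Z$ if used termwise, so the right move is to interchange the order of summation in the definition~\eqref{eq:conv} of $c^k$: after the substitution $j=i-p$ the inner sum becomes the discrete total variation of $\mu$, bounded by $\abs{\mu}_{\BV(\R)}$, while the remaining factor collapses to the discrete mass, giving $\sum_{i\in\Z}\abs{c^k_{i+1/2}-c^k_{i-1/2}}\le \abs{\mu}_{\BV(\R)}\norma{\overline{\nu}'}_{\L\infty(\R)}\norma{u^{\Delta}(t^k)}_{\L1(\R)}$. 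Collecting the three contributions, and replacing $\norma{u^{\Delta}(t^k)}_{\L1(\R)}$ by $\norma{u_0}_{\L1(\R)}$ through mass conservation (Lemma~\ref{lem:L1}) and $\norma{\mathfrak{s}u^k}_{\L\infty(\R)}$ through Lemma~\ref{lem:Linfty}, yields $\sum_{i\in\Z}\abs{\mathcal{F}^{k}_{i+1/2}-\mathcal{F}^{k}_{i-1/2}}\le\mathcal{K}_7$ with $\mathcal{K}_7$ depending on $\sum_{i\in\Z}\abs{\Delta_+(\mathfrak{s}_iu_i^n)}$, $\norma{\nu'}_{\L\infty(\R)}$, $\norma{\mathfrak{s}u^n}_{\L\infty}$, $\abs{\mathfrak{s}}_{\BV(\R)}$, $\norma{u_0}_{\L1(\R)}$ and $\mu$. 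Since Lemmas~\ref{lem:Linfty} and~\ref{lem:BV} render these quantities uniform in the time level up to $T$, the constant is uniform in $k$ and summing the $m-n$ single-step bounds gives the claim; the Godunov flux is handled identically (and more simply, the viscosity term being absent).
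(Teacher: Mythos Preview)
Your proposal is correct and is precisely the argument the paper has in mind: the authors' proof consists of the single sentence ``Owing to the total variation bound of $\mathfrak{s}u^n$, and following~\cite[Lemma~2.7]{ACT2015}, the proof follows,'' and your telescoping plus single-step flux-variation bound (with the interchange of summation to exploit $\mu\in\BV(\R)$ for the nonlocal weight) is exactly the content of that cited lemma adapted to the present discontinuous-$\mathfrak{s}$ setting. The only point to be explicit about is that the factor $\Theta/\lambda$ coming from the numerical-viscosity term is treated as a fixed scheme parameter (as you note), so $\mathcal{K}_7$ implicitly depends on the mesh ratio; this is standard and consistent with the paper.
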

\begin{proof}
  Owing to the total variation bound of $\mathfrak{s}u^n$, and following~\cite[Lemma~2.7]{ACT2015}, the proof follows.\end{proof}
{To show that the limit of the numerical approximations {$u^{\D}$} are indeed the entropy solutions,} we prove that the approximate solutions {$u^{\D}$} satisfy a discrete
entropy condition. For this, we introduce the
following numerical entropy flux,
\begin{align*}
{G^{n}_{{i+1/2}}}{(p,q,\alpha)}&=\mathcal{F}^{n}_{{i+1/2}}\left(\mathfrak{s}_i{\max(p, \mathfrak{s}_{\alpha}(x_i))},\,\mathfrak{s}_{i+1}{\max(q,\mathfrak{s}_{\alpha}(x_{i+1}))}\right)\\&\quad
  -
  \mathcal{F}^{n}_{{i+1/2}}\left(\mathfrak{s}_i{\min(p, \mathfrak{s}_{\alpha}(x_i))},\, \mathfrak{s}_{i+1}{\min(q, \mathfrak{s}_{\alpha}(x_{i+1}))}\right),
\end{align*}  
 where {$s_{\alpha}$ is as in def.~\ref{def:entropy}}.
 {
 \begin{remark}\normalfont
It is straightforward to check that,
 \begin{align*}
{G^{n}_{{i+1/2}}} (p,p,\alpha) & = \nu(c^n_{{i+1/2}}) \sgn(p-\mathfrak{s}_{\alpha}(x_i)) \left(f(\mathfrak{s}_{i}p)-f(\alpha) \right)\\ 
  & = \nu(c^n_{{i+1/2}}) \sgn(\mathfrak{s}_{i}p-\alpha) \left(f(\mathfrak{s}_{i}p)-f(\alpha) \right)\\ &\approx \nu(\mu*\overline{\nu}(u))(t^n,x_{i+1/2}) \sgn(\mathfrak{s}_{i}p-\alpha) \left(f(\mathfrak{s}_{i}p)-f(\alpha) \right),
 \end{align*} which shows that indeed the numerical entropy flux is consistent with the adapted entropy flux (see def.~\ref{def:entropy}).
 \end{remark}}
\begin{lemma}[Discrete entropy condition]
  \label{lem:entropy}
  Let assumptions~\textbf{(H1)}, \textbf{(H2)} and conditions~\eqref{CFl} hold. Fix an initial datum
  $u_0 \in (\L1  \cap \BV) (\R;{[0,\infty)})$. Then,
  the approximate solution ${u^{\Delta}}$ defined by the {marching formula}~\eqref{scheme2}
  satisfies for all $i \in \Z, {0\leq n \leq N}$ and
  for all $\alpha\in \R,$ the discrete entropy inequality
 \begin{align} 
 \begin{split}
\label{eq:discrete_entropy}
 &\modulo{u_i^{n+1}-\mathfrak{s}_{\alpha}(x_i)}-\modulo{u_i^{n}- \mathfrak{s}_{\alpha}(x_i)}+\lambda\left({{G^{n}_{{i+1/2}}}(u_i^{n} ,u_{i+1}^{n},\alpha)}-G^{n}_{{i-1/2}}(u_{i-1}^{n} ,u_i^{n},\alpha)\right)\\
 &\quad \quad +\lambda\sgn(u_i^{n+1}-\mathfrak{s}_{\alpha}(x_i))f(\alpha)(\nu(c_{i+1/2}^{n})-\nu(c_{i-1/2}^{n}))\le 0.
 \end{split}
 \end{align}
 \end{lemma}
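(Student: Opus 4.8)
The plan is to adapt the classical Crandall--Majda argument for monotone schemes to the present nonlocal, discontinuous-flux setting. Throughout I freeze the coefficients $\nu(c^n_{i\mp1/2})$ and write $H(w_{i-1},w_i,w_{i+1})$ for the marching operator~\eqref{scheme2} acting on its three state slots; by Remark~\ref{rem:mono}, under the CFL condition~\eqref{CFl} this map is nondecreasing in each of $w_{i-1},w_i,w_{i+1}$. The second ingredient is a purely algebraic observation that makes the \emph{adapted} Kruzkov level visible to the scheme: since $\mathfrak{s}>0$ and $\mathfrak{s}_i\,\mathfrak{s}_\alpha(x_i)=\alpha$, one has $\mathfrak{s}_i\max(w,\mathfrak{s}_\alpha(x_i))=\max(\mathfrak{s}_iw,\alpha)$ and $\mathfrak{s}_i\min(w,\mathfrak{s}_\alpha(x_i))=\min(\mathfrak{s}_iw,\alpha)$, so that $G^n_{i+1/2}(p,q,\alpha)$ is exactly the increment of $\mathcal{F}^n_{i+1/2}$ between the states capped above and below at the adapted level.

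First I would compute the image of the adapted constant state $\{\mathfrak{s}_\alpha(x_j)\}_{j}$. Using the consistency identity $\mathcal{F}^n_{i+1/2}(\alpha,\alpha)=\nu(c^n_{i+1/2})f(\alpha)$, which holds for both fluxes since $\mathcal{F}_{LF}(a,\alpha,\alpha)=af(\alpha)$ and $\mathcal{F}_{God}(a,\alpha,\alpha)=aF_{God}(\alpha,\alpha)=af(\alpha)$, substituting $w_j=\mathfrak{s}_\alpha(x_j)$ into~\eqref{scheme2} gives
\begin{align*}
H(\mathfrak{s}_\alpha(x_{i-1}),\mathfrak{s}_\alpha(x_i),\mathfrak{s}_\alpha(x_{i+1}))=\mathfrak{s}_\alpha(x_i)-\lambda f(\alpha)\bigl(\nu(c^n_{i+1/2})-\nu(c^n_{i-1/2})\bigr)=:\kappa_i.
\end{align*}
The crucial feature, in contrast to the local/constant-flux case, is that the spatial variation of the nonlocal coefficient prevents the adapted constant from being a fixed point of the scheme; the deviation $\kappa_i-\mathfrak{s}_\alpha(x_i)=-\lambda f(\alpha)(\nu(c^n_{i+1/2})-\nu(c^n_{i-1/2}))$ is precisely what will generate the last term in~\eqref{eq:discrete_entropy}.

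Next I would run the monotonicity comparison. Writing $p\vee q=\max(p,q)$ and $p\wedge q=\min(p,q)$, monotonicity gives $H(u^n_{i-1}\vee\mathfrak{s}_\alpha(x_{i-1}),u^n_i\vee\mathfrak{s}_\alpha(x_i),u^n_{i+1}\vee\mathfrak{s}_\alpha(x_{i+1}))\ge H(u^n_{i-1},u^n_i,u^n_{i+1})=u^{n+1}_i$ and $\ge H(\mathfrak{s}_\alpha(x_{i-1}),\mathfrak{s}_\alpha(x_i),\mathfrak{s}_\alpha(x_{i+1}))=\kappa_i$, hence the capped-above image dominates $u^{n+1}_i\vee\kappa_i$; the symmetric computation with $\wedge$ shows the capped-below image is dominated by $u^{n+1}_i\wedge\kappa_i$. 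Subtracting the two marching identities and invoking the algebraic observation to recognise the numerical entropy flux in the flux increments yields
\begin{align*}
\abs{u^{n+1}_i-\kappa_i}\le\abs{u^n_i-\mathfrak{s}_\alpha(x_i)}-\lambda\bigl(G^n_{i+1/2}(u^n_i,u^n_{i+1},\alpha)-G^n_{i-1/2}(u^n_{i-1},u^n_i,\alpha)\bigr).
\end{align*}

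Finally I would return from $\kappa_i$ to the level $\mathfrak{s}_\alpha(x_i)$ via the elementary one-sided inequality $\abs{a-\delta}\le\abs{a}-\sgn(a-\delta)\,\delta$, applied with $a=u^{n+1}_i-\kappa_i$ and $\delta=\lambda f(\alpha)(\nu(c^n_{i+1/2})-\nu(c^n_{i-1/2}))$, so that $a-\delta=u^{n+1}_i-\mathfrak{s}_\alpha(x_i)$. This replaces $\abs{u^{n+1}_i-\kappa_i}$ by $\abs{u^{n+1}_i-\mathfrak{s}_\alpha(x_i)}$ at the exact cost of the term $-\lambda\sgn(u^{n+1}_i-\mathfrak{s}_\alpha(x_i))f(\alpha)(\nu(c^n_{i+1/2})-\nu(c^n_{i-1/2}))$, and rearranging produces~\eqref{eq:discrete_entropy}. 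I expect this last correction to be the main obstacle: because the adapted steady state is not preserved, the clean Crandall--Majda identity must be repaired while keeping track of $\sgn(u^{n+1}_i-\mathfrak{s}_\alpha(x_i))$, and it is essential to use the one-sided inequality above rather than the triangle inequality, since the latter would only deliver an absolute value $\abs{\nu(c^n_{i+1/2})-\nu(c^n_{i-1/2})}$ instead of the signed nonlocal term required.
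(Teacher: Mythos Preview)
Your argument is correct and follows essentially the same route as the paper. Both proofs compute $H(\mathfrak{s}_\alpha(x_{i-1}),\mathfrak{s}_\alpha(x_i),\mathfrak{s}_\alpha(x_{i+1}))=\mathfrak{s}_\alpha(x_i)-\lambda f(\alpha)(\nu(c^n_{i+1/2})-\nu(c^n_{i-1/2}))$, use monotonicity of $H$ on the capped states to obtain $|u^{n+1}_i-\kappa_i|\le |u^n_i-\mathfrak{s}_\alpha(x_i)|-\lambda(G^n_{i+1/2}-G^n_{i-1/2})$, and then pass from $|u^{n+1}_i-\kappa_i|$ to $|u^{n+1}_i-\mathfrak{s}_\alpha(x_i)|$ via the same elementary inequality---your $|a-\delta|\le|a|-\sgn(a-\delta)\,\delta$ is just a relabeling of the paper's $\sgn(a+b)(a+b)\ge\sgn(a)(a+b)$.
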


\begin{proof}
 {Let $i\in \Z,$ $n  \in  \{0, \ldots, N\}$}
and consider, 
\begin{align*}
&\abs{u_i^{n}-\mathfrak{s}_{\alpha}(x_i)}-\lambda\left({G^{n}_{{i+1/2}}}{(u_i^n,u_{i+1}^n,\alpha)}-G^{n}_{{i-1/2}}{(u_{i-1}^n,u_{i}^n,\alpha)}\right)\\
&=H(\nu(c^{n}_{{i-1/2}}),\nu(c^{n}_{{i+1/2}}),\max(u_{i-1}^{n},s_{\alpha}(x_{i-1})),\max(u_i^{n},s_{\alpha}(x_{i})),\max(u_{i+1}^{n},s_{\alpha}(x_{i+1})))\\&\quad-H(\nu(c^{n}_{{i-1/2}}),\nu(c^{n}_{{i+1/2}}),\min(u_{i-1}^{n},\mathfrak{s}_{\alpha}(x_{i-1})),\min(u_{i}^{n},\mathfrak{s}_{\alpha}(x_{i})),\min(u_{i+1}^{n},\mathfrak{s}_{\alpha}(x_{i+1})))\\
&\geq \max(u_i^{n+1},H(\nu(c^{n}_{{i-1/2}}),\nu(c^{n}_{{i+1/2}}),\mathfrak{s}_{\alpha}(x_{i-1}),\mathfrak{s}_{\alpha}(x_{i}),\mathfrak{s}_{\alpha}(x_{i+1})))\\&\quad-\min(u_i^{n+1},H(\nu(c^{n}_{{i-1/2}}),\nu(c^{n}_{{i+1/2}}),\mathfrak{s}_{\alpha}(x_{i-1}),\mathfrak{s}_{\alpha}(x_{i}),\mathfrak{s}_{\alpha}(x_{i+1}))
\\
&=\abs{u_i^{n+1}-H(\nu(c^{n}_{{i-1/2}}),\nu(c^{n}_{{i+1/2}}),\mathfrak{s}_{\alpha}(x_{i-1}),\mathfrak{s}_{\alpha}(x_{i}),\mathfrak{s}_{\alpha}(x_{i+1}))}\\
&=\sgn(u_i^{n+1}-H(\nu(c^{n}_{{i-1/2}}),\nu(c^{n}_{{i+1/2}}),\mathfrak{s}_{\alpha}(x_{i-1}),\mathfrak{s}_{\alpha}(x_{i}),\mathfrak{s}_{\alpha}(x_{i+1})))\\
&\quad \times(u_i^{n+1}-H(\nu(c^{n}_{{i-1/2}}),\nu(c^{n}_{{i+1/2}}),\mathfrak{s}_{\alpha}(x_{i-1}),\mathfrak{s}_{\alpha}(x_{i}),\mathfrak{s}_{\alpha}(x_{i+1})))\\&
= \sgn\left(u_i^{n+1}-[\mathfrak{s}_{\alpha}(x_i)-\lambda f(\alpha) (\nu(c^{n}_{{i+1/2}})-\nu(c^{n}_{{i-1/2}}))] \right)  \\& \quad \times \left(u_i^{n+1}-[\mathfrak{s}_{\alpha}(x_i)-\lambda f(\alpha) (\nu(c^{n}_{{i+1/2}})-\nu(c^{n}_{{i-1/2}}))] \right)
\\
&\geq \sgn \left(u_i^{n+1}-\mathfrak{s}_{\alpha}(x_i) \right) \times \left(u_i^{n+1}-[\mathfrak{s}_{\alpha}(x_i)-\lambda f(\alpha) (\nu(c^{n}_{{i+1/2}})-\nu(c^{n}_{{i-1/2}}))] \right) \\&= \abs{u_i^{n+1}-\mathfrak{s}_{\alpha}(x_i)} + \lambda f(\alpha) \sgn (u_i^{n+1}-\mathfrak{s}_{\alpha}(x_i))[\nu(c^{n}_{{i+1/2}})-\nu(c^{n}_{{i-1/2}})].
\end{align*} 
Here, the first inequality follows from the monotonicity of the scheme (cf. Remark \ref{rem:mono}) and the last inequality follows from the inequality
$
\sgn(a+b)(a+b) \geq \sgn(a)(a+b).
$ This completes the proof.
\end{proof}
{      
\begin{theorem}[Convergence]\label{convergence}
As $\Delta x \rightarrow 0$, the approximations ${u^{\Delta}}$ 
generated by the  marching formula \eqref{scheme2} converge in $L^1_{\loc}(\overline{Q}_T)$ and pointwise a.e.~in $\overline{Q}_T$ to the entropy solution 
	$u$ of the Cauchy problem \eqref{eq:u}-\eqref{eq:id}
  with initial data $u_0 \in (L^1 \cap \BV)(\R)$.
\end{theorem}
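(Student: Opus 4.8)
The plan is to combine the a priori estimates established above to extract a convergent subsequence, to identify its limit as an entropy solution in the sense of \Cref{def:entropy}, and then to invoke the uniqueness result of \Cref{uniqueness} to promote subsequential convergence to convergence of the entire family. \textbf{Compactness.} I would first collect the uniform bounds: \Cref{lem:Pos} and \Cref{lem:Linfty} give that $\{u^{\Delta}\}$ is bounded in $\L\infty(\overline{Q}_T)$, \Cref{lem:L1} gives the uniform $\L1$ bound, \Cref{lem:BV} gives a $\Delta x$-independent bound on the spatial total variation $\sum_i\abs{\Delta_+ u_i^n}$ on $[0,T]$, and \Cref{lem:L1t} provides the $\L1$-Lipschitz-in-time estimate $\Delta x\sum_i\abs{u_i^m-u_i^n}\le\mK_7\Delta t\,(m-n)$. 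Uniform spatial $\BV$ together with an $\L1$ modulus of continuity in time is precisely the hypothesis of a Kolmogorov--Riesz compactness argument (equivalently Helly plus an Arzel\`a--Ascoli argument for $\L1$-valued maps); hence a subsequence, still denoted $u^{\Delta}$, converges to a limit $u\in C([0,T];\L1(\R))\cap\L\infty([0,T];\BV(\R))$ in $\L1_{\loc}(\overline{Q}_T)$ and pointwise a.e.\ on $\overline{Q}_T$. The regularity class is inherited from the uniform $\BV$ and time-continuity bounds, so $u$ is an admissible candidate for \Cref{def:entropy}.

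\textbf{Identification of the limit.} Before passing to the limit in the scheme I would establish convergence of the discrete nonlocal data. Since $\mu\in W^{2,\infty}\cap\BV$ and $\overline{\nu}$ is Lipschitz, the convolution is a smoothing operation, so the $\L1_{\loc}$ and a.e.\ convergence of $u^{\Delta}$, together with the uniform $\L1$ bound, yields that the reconstruction of $c^n_{i+1/2}$ converges to $(\mu*\overline{\nu}(u))(t,x)$ and, crucially, that the discrete gradient $\big(\nu(c^n_{i+1/2})-\nu(c^n_{i-1/2})\big)/\Delta x$ converges to $\mathcal{U}_x=\partial_x\nu(\mu*\overline{\nu}(u))$ locally uniformly; these are exactly the Lipschitz estimates on $\abs{\mathcal{V}-\mathcal{U}}$ and $\abs{\mathcal{V}_x-\mathcal{U}_x}$ already computed in the proof of \Cref{uniqueness}, read off with $v=u^{\Delta}$. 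I would then multiply the discrete entropy inequality \eqref{eq:discrete_entropy} by $\Delta x\,\phi(t^n,x_i)$ for a nonnegative $\phi\in C_c^{\infty}([0,T)\times\R)$ and sum over $i\in\Z$, $0\le n\le N$. Abel summation transfers the forward differences of $\modulo{u_i^n-\mathfrak{s}_{\alpha}(x_i)}$ and of the numerical entropy flux $G^n_{i+1/2}$ onto discrete derivatives of $\phi$, which converge to $\phi_t$ and $\phi_x$. Using consistency of $G^n_{i+1/2}$ with the adapted entropy flux (the Remark following its definition) and the above convergences, each summed term converges to its counterpart in \eqref{kruz}: the first to $\int\abs{u-\mathfrak{s}_{\alpha}}\phi_t$, the flux term to $\int\sgn(\overline{u}-\alpha)(f(\overline{u})-f(\alpha))\mathcal{U}\,\phi_x$, the term $\lambda\,\sgn(u_i^{n+1}-\mathfrak{s}_{\alpha}(x_i))f(\alpha)(\nu(c^n_{i+1/2})-\nu(c^n_{i-1/2}))$ to $-\int f(\alpha)\sgn(\overline{u}-\alpha)\mathcal{U}_x\,\phi$, and the $n=0$ contribution to the initial-data integral $\int_{\R}\abs{u_0-\mathfrak{s}_{\alpha}}\phi(0,x)\,dx$. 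Thus $u$ satisfies \eqref{kruz} for every $\alpha\in\R$, i.e.\ $u$ is an entropy solution of \eqref{eq:u}-\eqref{eq:id}.

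\textbf{Full convergence.} Finally, since the limit $u$ is an entropy solution with initial datum $u_0$ and \Cref{uniqueness} guarantees that such a solution is unique, every subsequential limit coincides with this same $u$. A standard subsequence argument then upgrades the convergence to the whole family $u^{\Delta}$, which yields the claimed $\L1_{\loc}(\overline{Q}_T)$ and pointwise a.e.\ convergence.

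I expect the main obstacle to lie in the identification step, specifically in reconstructing the genuinely nonlocal term $f(\alpha)\,\mathcal{U}_x\,\phi$: one must show that the discrete gradient structure of $\nu(c^n_{\cdot})$ passes to $\mathcal{U}_x$ without loss, pair the only-a.e.\ convergent sign factors $\sgn(u_i^{n+1}-\mathfrak{s}_{\alpha}(x_i))$ against the smooth factor $\mathcal{U}_x$, and rule out any spurious concentration of mass along the possibly dense discontinuity set of $\mathfrak{s}$. The uniform spatial $\BV$ bound of \Cref{lem:BV} is precisely what prevents such concentration and legitimizes the limit passage despite the roughness of $\mathfrak{s}$.
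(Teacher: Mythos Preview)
Your proposal is correct and follows essentially the same route as the paper: compactness via the uniform $\BV$ and $\L1$-in-time estimates (the paper invokes Helly's theorem), a Lax--Wendroff-type passage to the limit in the discrete entropy inequality \eqref{eq:discrete_entropy} after multiplying by $\phi_i^n\Delta x$ and summing by parts, and then uniqueness (\Cref{uniqueness}) to upgrade to full convergence. The paper makes explicit the one technical device you allude to in your final paragraph, namely replacing $G^{n}_{i+1/2}(u_i^n,u_{i+1}^n,\alpha)$ by the consistent diagonal value $G^{n}_{i+1/2}(u_i^n,u_i^n,\alpha)$ and replacing $\sgn(u_i^{n+1}-\mathfrak{s}_{\alpha}(x_i))$ by $\sgn(u_i^{n}-\mathfrak{s}_{\alpha}(x_i))$, showing in each case that the discrepancy is $O(\Delta x)$ thanks to the $\BV$ bound and \eqref{eq:c_i+1/2}.
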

\begin{proof}
Lemma \ref{lem:BV} implies that the sequence of functions $u^{\D}(t,\cdot)$ is uniformly total variation bounded. Owing to the time estimate (cf.~Lemmma \ref{lem:L1t}) and Helly's theorem (see \cite[Cor.~A.10]{HR2015}) there exists $u\in \L\infty([0,T]; \BV(\R)) \cap C([0,T];L^1(\R))$ such that up to a subsequence ${u^{\Delta}} \rightarrow u$ in $L^1_{loc}(Q_T).$ 
An amalgamation of the Lax-Wendroff type argument presented in  \cite[Thm.~5.1]{BBKT2011} and \cite[Lem.~4.4]{GJT2020} implies that the limit $u$ indeed satisfies the entropy condition def.~\ref{def:entropy}. We briefly sketch the proof for the sake of completeness.

For any non negative test function 
 $\phi \in C_c^{\infty}(\R),$ define a grid function as $\phi_i^n:=\phi(t^n,x_i).$ Multiplying the discrete entropy inequality \eqref{eq:discrete_entropy} by $\phi_i^n\D x,$ summing over $i\in \Z, 0\leq n \leq N$ and finally applying summation by parts, one gets:

\begin{align*}
   0&\leq  \D t \sum\limits_{n=0}^N \sum\limits_{i\in \Z} \abs{u_i^n-\mathfrak{s}_{\alpha}(x_i)} 
  \frac{\phi_i^{n+1}-\phi_i^n}{\D t }\\
   &\quad + \D x \D t \sum\limits_{n=0}^N \sum\limits_{i\in \Z} {{G^{n}_{{i+1/2}}}(u_i^{n} ,u_{i+1}^{n},\alpha)} \frac{\phi_{i+1}^{n}-\phi_i^{n}}{\D x }\\
   &\quad - \D x \D t \sum\limits_{n=0}^N \sum\limits_{i\in \Z} \lambda\sgn(u_i^{n+1}-\mathfrak{s}_{\alpha}(x_i))f(\alpha)\frac{(\nu(c_{i+1/2}^{n})-\nu(c_{i-1/2}^{n}))}{\D x}\phi_i^n\\
   &\quad + \D x \sum\limits_{i\in \Z} \abs{u_i^0-\mathfrak{s}_{\alpha}(x_i)} \phi_i^0
   \\
   &:=E^{\D}_1+E^{\D}_2+E^{\D}_3+E^{\D}_4.
   \end{align*}
   Define 
\begin{align*}
\tilde{E}^{\D}_2&:= \D x \D t \sum\limits_{n=0}^N \sum\limits_{i\in \Z} G^{n}_{i+1/2}(u_i^{n} ,u_{i}^{n} ) \frac{\phi_{i+1}^{n}-\phi_i^{n}}{\D x },\\ 
\tilde{E}^{\D}_3&:= \D x \D t \sum\limits_{n=0}^N \sum\limits_{i\in \Z} \lambda\sgn(u_i^{n}-\mathfrak{s}_{\alpha}(x_i))f(\alpha)\frac{(\nu(c_{i+1/2}^{n})-\nu(c_{i-1/2}^{n}))}{\D x}\phi_i^n.
\end{align*}
Using the estimates \eqref{eq:c_i+1/2}, \begin{align*}
    \abs{G^n_{i+1/2}(u_i^n,u_{i+1}^n,\alpha)-G^n_{i+1/2}(u_i^n,u_i^n,\alpha)} \leq \abs{v(c^n_{i+1/2})} \abs{f}_{\lip(\R)}\abs{u_{i+1}^n-u_{i}^n},\end{align*} and following the arguments in \cite[Thm.~5.1]{BGKT2008},
one obtains\begin{align*}
\abs{E_2^{\D}-\tilde{E}_2^{\D}} \leq C \D x, \text{ and }
\abs{E_3^{\D}-\tilde{E}_3^{\D}} \leq C \D x.
\end{align*}
   Now, invoking Lebesgue dominated convergence theorem, as $\D x \rightarrow 0$, we have:
  \begin{align*}
       E^{\D}_1 &\rightarrow \int_{\Pi_T} \abs{u(t,x)-\mathfrak{s}_{\alpha}(x)}\phi_t(t,x) \d t \d x, \\
\tilde{E}^{\D}_2 &\rightarrow \int_{\Pi_T} \sgn(u(t,x)-k)(f(s(x)u(t,x))-\mathfrak{s}_{\alpha}(x)) (\mu*\overline{\nu}(u))(t,x)\phi_x(t,x) \d t \d x,\\
\tilde{E}^{\D}_3 &\rightarrow \int_{\Pi_T} \sgn(u(t,x)-k)f(\alpha) \partial_x(\mu*\overline{\nu}(u))(t,x)\phi(t,x) \d t \d x,\\
       E^{\D}_4 &\rightarrow \int_{\R} \abs{u_0(x)-\mathfrak{s}_{\alpha}(x)} \phi(0,x) \d x.
    \end{align*}
Finally, invoking the uniqueness of the entropy solution (see Thm.~ \ref{uniqueness}), in fact, the entire sequence $u^{\Delta}$ converges to the entropy solution $u$.
This completes the proof.
\end{proof}

}
{\section{Numerical Experiments}\label{NR}
\begin{figure}[h!]
 \centering
 \begin{subfigure}{.45\textwidth}
\includegraphics[width=\textwidth,keepaspectratio]{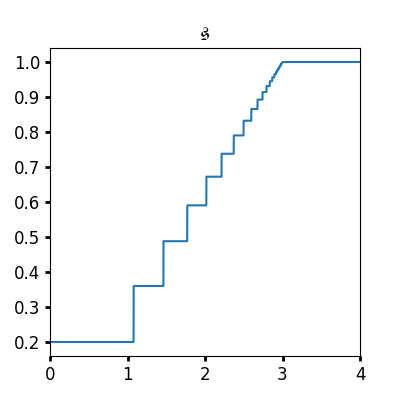}
\end{subfigure}
\hfill
\begin{subfigure}{.45\textwidth}
\includegraphics[width=\textwidth,keepaspectratio]{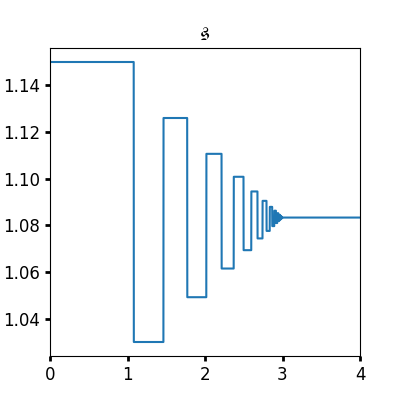}
\end{subfigure}\hspace{1cm}
\begin{subfigure}{.45\textwidth}
\includegraphics[width=\textwidth,keepaspectratio]{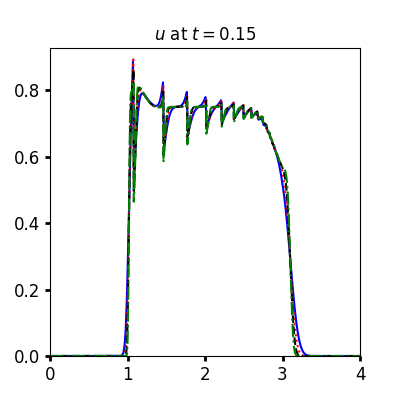}
\end{subfigure}
\hfill
\begin{subfigure}{.45\textwidth}
\includegraphics[width=\textwidth,keepaspectratio]{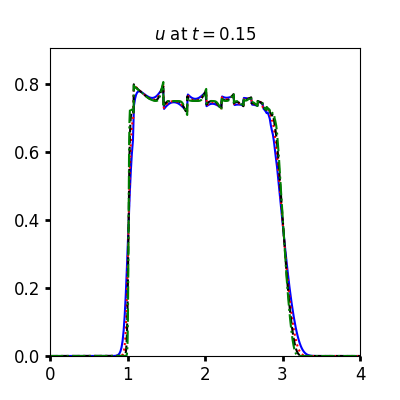}
\end{subfigure}\hspace{1cm}
\begin{subfigure}{.45\textwidth}
\includegraphics[width=\textwidth,keepaspectratio]{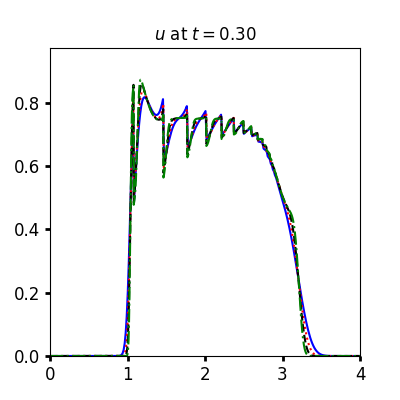}
 \caption{$f(u)=u$, monotone $\mathfrak{s}$}
 \label{linear}
\end{subfigure}
\hfill
\begin{subfigure}{.45\textwidth}
\includegraphics[width=\textwidth,keepaspectratio]{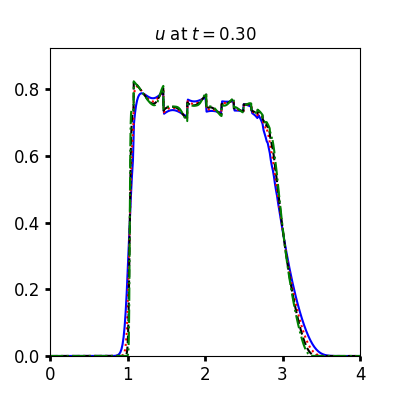}
 \caption{$f(u)=u(1-u)$, non-monotone $\mathfrak{s}$}
 \label{nonlinear}
\end{subfigure}

\caption{Solution to \eqref{eq:u}-\eqref{eq:id} on the domain $[0, \, 4]$ at times $t =
    0.00,\; 0.15,\;0.30$ with decreasing mesh size  $\Delta x=1/75 $({\color{blue}\full}), $\Delta x=1/150 $({\color{magenta}\dotted}), $\Delta x=1/300 $({\color{black}\dashed}) and $\Delta x=1/600 $({\color{darkgreen}\chainn})}
  \label{fig:ex1}
\end{figure} 
We present some numerical experiments to illustrate the theory presented in the previous sections. We show the results for \eqref{scheme2} with Lax--Friedrichs type flux. The results obtained by \eqref{scheme2} with Godunov type flux are similar, and are not shown here. Throughout the section, $\Theta$ is chosen to be $1/3$, and $\lambda$ is chosen so as to satisfy the CFL condition \eqref{CFl}. We consider the \eqref{eq:u}-\eqref{eq:id} with the kernel
  \begin{align*}
\mu(x)&=L(\epsilon^2-x^2)^{3}\mathbbm{1}_{(-\epsilon,\epsilon)}(x), \,x\in \R,
\end{align*} $L$ being such that $\int_{\R}\mu(x)\d{x}=1,$ the velocity 
 $\nu(a)=1-a,$ and with 
  \begin{align*}\label{s}
\quad \mathfrak{s}(x)= 
\begin{cases}
\frac{1}{3}a_1, \quad & x \le a_1,\\
\frac{1}{3}\sum\limits_{n=2}^\infty a_n \mathbbm{1}_{[a_n,a_{n+1}]}(x), \quad & x \in (a_1,3],\\
1, \quad & x> 3,\\
\end{cases} ,
\end{align*}
admitting infinitely many 
\begin{figure}[h!]
\noindent\centering\begin{minipage}{0.9\textwidth}
\includegraphics{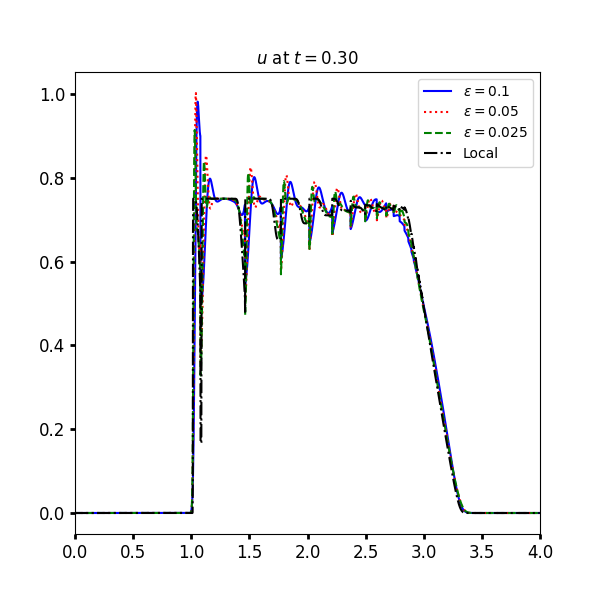}
  \end{minipage} \noindent\centering\begin{minipage}{0.9\textwidth}
    \centering
\begin{tabular}{|c|c|c|c|c|c|c|c|c|c|}\hline
\textbf{$\epsilon$} & $0.1$ & $0.05$ & $0.0.025$ &$0.01$ \\ 
     \hline
    \textbf{$e_{\epsilon}$} & $0.1031$ & $0.0685$ & $0.0390$&$0.0257$ \\
     \hline
\end{tabular}
  \end{minipage} \caption{$\L1$ difference between Nonlocal solution of \eqref{eq:u}-\eqref{eq:id}  with decreasing kernel support $\epsilon$ and corresponding local solution, $\Delta x=1/600,T=0.3$: linear $f$, and increasing $\mathfrak{s}$}\label{fig:my_label21}
\end{figure}
\begin{figure}[h!]\centering
\includegraphics[width=.49\textwidth,keepaspectratio]{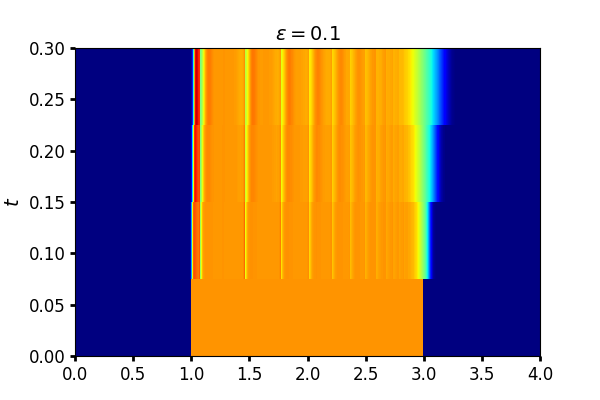}
\includegraphics[width=.49\textwidth,keepaspectratio]{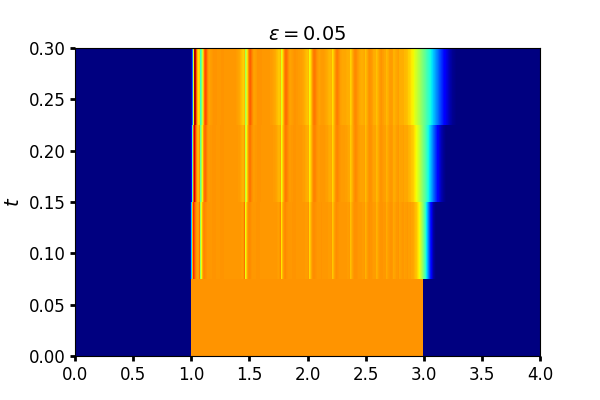}\\
\includegraphics[width=.49\textwidth,keepaspectratio]{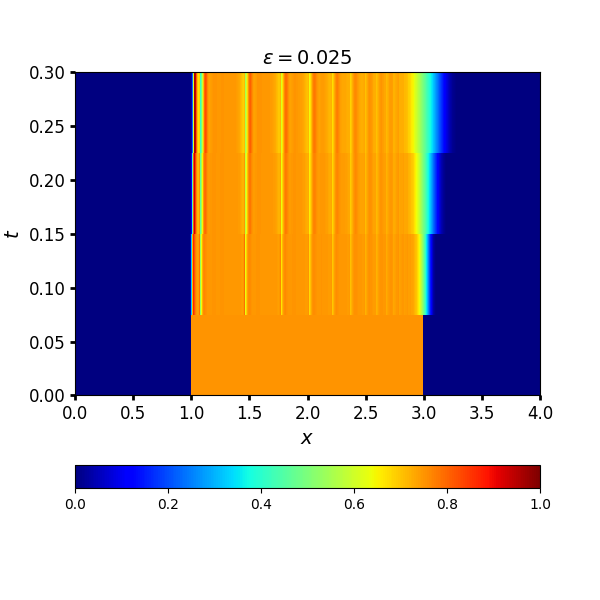}
\includegraphics[width=.49\textwidth,keepaspectratio]{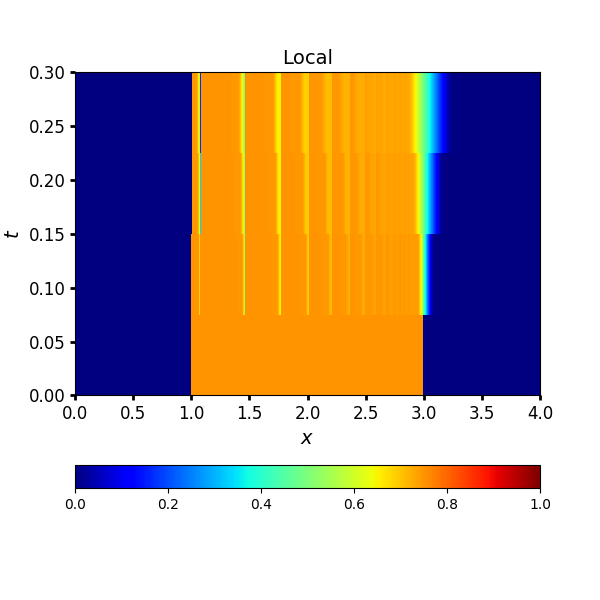}
\caption{Nonlocal solution of \eqref{eq:u}-\eqref{eq:id} with decreasing kernel support $\epsilon$ and corresponding local solution, $\Delta x=1/600, t\in[0,0.3]$: linear $f$, and increasing $\mathfrak{s}$}
  \label{fig:ex2}
\end{figure}discontinuities at the points $\{a_n\}_{n\in \N},$ with 
\begin{enumerate}
    \item  $a_n=3(1-0.8^n)$ making $\mathfrak{s}$ non--decreasing
    \item  $a_n=(1-(-0.8)^n)/12+1$ making $\mathfrak{s}$ non--monotonic.
\end{enumerate} 
We also present numerical integration with two choices of local part of the fluxes namely, $f(u)=u$ and $f(u)=u(1-u)$. All the parameters described above fit in the hypothesis of the paper. We choose the domain $[0, \, 4]$ and the time
 interval $[0, \, 0.3]$ with initial data
\begin{equation*}
    \label{eq:ex1} u_0(x)=0.75\mathbbm{1}_{(1,3)}(x).\end{equation*}

Figure \ref{fig:ex1} displays the numerical approximations generated by the numerical  
scheme \eqref{scheme2}, 
with decreasing grid size $\Delta x$, starting with $\Delta x =1/75$ for various combinations of the above parameters at times $t=0.15$ and $t=0.30$. 

Figure \ref{linear} represents the behavior of the solutions with monotonic $\mathfrak{s}$ and with linear $f(u)=u.$ It can be seen that the density $u$ goes beyond $0.75$ {violating the  invariant region principle}, a phenomenon also observed in non-local conservation laws with smooth local part, see \cite{ACT2015}. 

Figure \ref{nonlinear} represents the behavior of the solutions with non-monotonic $\mathfrak{s}$ and with non-linear $f(u)=u(1-u),$ with zeros at $u=0$ and $u=1.$ It can be seen that the density $u$ { always lies within the interval $[0,1]$ demonstrating the invariant region principle (see Lemma \ref{lem:invariant})}. 

It can be seen that the numerical scheme is able to capture the features of the solutions well. {The roughness of the coefficient $\mathfrak{s}$ leads to the abruptness in the entropy solution which can be seen along the discontinuities of $\mathfrak{s}$}.
 The results of other combinations of parameters(non-monotone $\mathfrak{s}$ with linear $f$, and non-linear $f$ with monotone $\mathfrak{s}$), are similar and not shown here.
  
Figures \ref{fig:my_label21}-\ref{fig:ex2} investigate 
the behavior of the approximate solutions as the kernel support $\epsilon\rightarrow0.$ For this experiment, we choose monotonic $\mathfrak{s}$ with $f(u)=u.$ With $\Delta x=1/600,$ we compare the nonlocal solution  ${u^{\Delta}_{\epsilon}}$ obtained by \eqref{scheme2} with the Lax-Friedrichs type flux with ${u^{\Delta}_{loc}}$, the solution for the corresponding local conservation law {\eqref{eq:u}-\eqref{eq:id}} obtained using the local Godunov scheme of \cite{GTV2022} at time $T=0.3$, as $\epsilon$ takes values $0.1, 0.05$, $0.025$ and $0.01$. 

Let $e_{\epsilon}:={\norma{u^{\Delta}_{loc}(0.3,\cdot)-u^{\Delta}_{\epsilon}(0.3,\cdot)}_{L^1(\R)}}.$ It can be seen in Figure \ref{fig:my_label21} that $e_{\epsilon}$ decreases as $\epsilon\rightarrow0^+$ and that the non-local approximations seem to converge to local solution with decreasing $\epsilon.$ The findings of the experiment suggest that one can possibly obtain the solutions of the local conservation laws, by decreasing the kernel support, and is a question of independent interest.}
\section*{Acknowledgement}
Parts of this work were carried out during GV's tenure of the ERCIM ‘Alain Bensoussan’ Fellowship Programme, and were supported in part by the project \textit{IMod --- Partial differential equations, statistics and data:
An interdisciplinary approach to data-based modelling}, project number 325114, from the Research Council of Norway. Parts of this work were also carried out during GV's tenure at IIM Indore and AA's research visit at NTNU, and were partially supported by AA's Seed Money Grant 
SM/11/2021/22 and by AA's Faculty Development Allowance, from IIM Indore.
\bibliographystyle{abbrv}
\bibliography{AHV_system_AHV}

\def\cprime{$'$} \def\cprime{$'$}
\begin{thebibliography}{10}

\bibitem{AGDV2011}
Adimurthi, S.~S. Ghoshal, R.~Dutta, and G.~Veerappa~Gowda.
\newblock Existence and nonexistence of {TV} bounds for scalar conservation
  laws with discontinuous flux.
\newblock {\em Comm. Pure Appl. Math.}, 64(1):84--115, 2011.

\bibitem{AJV2004}
Adimurthi, J.~Jaffr{\'e}, and G.~D. Veerappa~Gowda.
\newblock {G}odunov-type methods for conservation laws with a flux function
  discontinuous in space.
\newblock {\em SIAM J. Numer. Anal.}, 42(1):179--208, 2004.

\bibitem{AMV2005}
Adimurthi, S.~Mishra, and G.~D. Veerappa~Gowda.
\newblock Optimal entropy solutions for conservation laws with discontinuous
  flux-functions.
\newblock {\em J. Hyperbolic Differ. Equ.}, 2(04):783--837, 2005.

\bibitem{ACG2015}
A.~Aggarwal, R.~M. Colombo, and P.~Goatin.
\newblock Nonlocal systems of conservation laws in several space dimensions.
\newblock {\em SIAM J. Numer. Anal.}, 53(2):963--983, 2015.

\bibitem{AG2016}
A.~Aggarwal and P.~Goatin.
\newblock Crowd dynamics through non-local conservation laws.
\newblock {\em Bull. Braz. Math. Soc. (N.S.)}, 47(1):37--50, 2016.

\bibitem{AHV2023}
A.~Aggarwal, H.~Holden, and G.~Vaidya.
\newblock On the accuracy of the finite volume approximations to nonlocal
  conservation laws.
\newblock {\em arXiv:2306.00142}, 2023.

\bibitem{ASSV2020}
A.~Aggarwal, M.~R. Sahoo, A.~Sen, and G.~Vaidya.
\newblock Solutions with concentration for conservation laws with discontinuous
  flux and its applications to numerical schemes for hyperbolic systems.
\newblock {\em Stud. Appl. Math.}, 145(2):247--290, 2020.

\bibitem{AS2012}
D.~Amadori and W.~Shen.
\newblock An integro-differential conservation law arising in a model of
  granular flow.
\newblock {\em J. Hyperbolic Differ. Equ.}, 9(1):105--131, 2012.

\bibitem{ACT2015}
P.~Amorim, R.~M. Colombo, and A.~Teixeira.
\newblock On the numerical integration of scalar nonlocal conservation laws.
\newblock {\em ESAIM Math. Model. Numer. Anal.}, 49(1):19--37, 2015.

\bibitem{AKR2010}
B.~Andreianov, K.~H. Karlsen, and N.~H. Risebro.
\newblock {$L^1$} stability for entropy solutions of nonlinear degenerate
  parabolic convection-diffusion equations with discontinuous coefficients.
\newblock {\em Netw. Heterog. Media}, 5(3):617--633, 2010.

\bibitem{AKR2011}
B.~Andreianov, K.~H. Karlsen, and N.~H. Risebro.
\newblock A theory of {$L^1$-}dissipative solvers for scalar conservation laws
  with discontinuous flux.
\newblock {\em Arch. Ration. Mech. Anal.}, 201(1):27--86, 2011.

\bibitem{AP2005}
E.~Audusse and B.~Perthame.
\newblock Uniqueness for scalar conservation laws with discontinuous flux via
  adapted entropies.
\newblock {\em Proc. Roy. Soc. Edinburgh A}, 135(2):253--266, 2005.

\bibitem{BBKT2011}
F.~Betancourt, R.~B{\"u}rger, K.~H. Karlsen, and E.~M. Tory.
\newblock On nonlocal conservation laws modelling sedimentation.
\newblock {\em Nonlinearity}, 24(3):855, 2011.

\bibitem{BG2016}
S.~Blandin and P.~Goatin.
\newblock Well-posedness of a conservation law with non-local flux arising in
  traffic flow modeling.
\newblock {\em Numer. Math.}, 132(2):217--241, 2016.

\bibitem{BS2020}
A.~Bressan and W.~Shen.
\newblock On traffic flow with nonlocal flux: a relaxation representation.
\newblock {\em Arch. Ration. Mech. Anal.}, 237(3):1213--1236, 2020.

\bibitem{BS2021}
A.~Bressan and W.~Shen.
\newblock Entropy admissibility of the limit solution for a nonlocal model of
  traffic flow.
\newblock {\em Commun. Math. Sci.}, 19(5):1447--1450, 2021.

\bibitem{BCV+2023}
R.~B{\"u}rger, H.~D. Contreras, L.~M. Villada, R.~B{\"u}rger, H.~D. Contreras,
  and L.~M. Villada.
\newblock A {{Hilliges-Weidlich-type}} scheme for a one-dimensional scalar
  conservation law with nonlocal flux.
\newblock {\em Netw. Heterog. Media}, 18(2):664--693, 2023.

\bibitem{BGKT2008}
R.~B{\"u}rger, A.~Garc{\'\i}a, K.~H. Karlsen, and J.~D. Towers.
\newblock A family of numerical schemes for kinematic flows with discontinuous
  flux.
\newblock {\em J. Engrg. Math.}, 60(3):387--425, 2008.

\bibitem{BKT2009}
R.~B{\"u}rger, K.~H. Karlsen, and J.~D. Towers.
\newblock An {E}ngquist-{O}sher-type scheme for {{Conservation Laws}} with
  {{Discontinuous Flux Adapted}} to {{Flux Connections}}.
\newblock {\em SIAM J. Numer. Anal.}, 47(3):1684--1712, 2009.

\bibitem{CC2022}
F.~A. Chiarello and G.~M. Coclite.
\newblock Non-local scalar conservation laws with discontinuous flux.
\newblock {\em Netw. Heterog. Media}, 18(1):380--398, 2022.

\bibitem{CV2021}
F.~A. Chiarello and L.~M. Villada.
\newblock On existence of entropy solutions for 1d nonlocal conservation laws
  with space discontinuous flux.
\newblock {\em arXiv:2103.13362}, 2021.

\bibitem{CCDKP2022}
G.~M. Coclite, J.-M. Coron, N.~De~Nitti, A.~Keimer, and L.~Pflug.
\newblock A general result on the approximation of local conservation laws by
  nonlocal conservation laws: The singular limit problem for exponential
  kernels.
\newblock {\em Ann. Inst. H. Poincar{\'e} C Anal. Non Lin{\'e}aire}, 2022.

\bibitem{GNAL2021}
G.~M. Coclite, N.~{De Nitti}, A.~Keimer, and L.~Pflug.
\newblock Singular limits with vanishing viscosity for nonlocal conservation
  laws.
\newblock {\em Nonlinear Anal.}, 211:112370, 2021.

\bibitem{CKR2023}
G.~M. Coclite, K.~H. Karlsen, and N.~H. Risebro.
\newblock A nonlocal lagrangian traffic flow model and the zero-filter limit.
\newblock {\em arXiv:2302.03889}, 2023.

\bibitem{CGES2021}
M.~Colombo, G.~Crippa, E.~Marconi, and L.~V. Spinolo.
\newblock Local limit of nonlocal traffic models: convergence results and total
  variation blow-up.
\newblock {\em Ann. Inst. H. Poincar{\'e} C Anal. Non Lin{\'e}aire},
  38(5):1653--1666, 2021.

\bibitem{CCMS2023}
M.~Colombo, G.~Crippa, E.~Marconi, and L.~V. Spinolo.
\newblock Nonlocal {{Traffic Models}} with {{General Kernels}}: {{Singular
  Limit}}, {{Entropy Admissibility}}, and {{Convergence Rate}}.
\newblock {\em Arch. Ration. Mech. Anal.}, 247(2):18, 2023.

\bibitem{CCS2019}
M.~Colombo, G.~Crippa, and L.~V. Spinolo.
\newblock On the singular local limit for conservation laws with nonlocal
  fluxes.
\newblock {\em Arch. Ration. Mech. Anal.}, 233(3):1131--1167, 2019.

\bibitem{CGL2012}
R.~M. Colombo, M.~Garavello, and M.~L{\'e}cureux-Mercier.
\newblock A class of nonlocal models for pedestrian traffic.
\newblock {\em Math. Mod. Met. Appl. Sci.}, 22(4):1150023--34, 2012.

\bibitem{CHL2011}
R.~M. Colombo, M.~Herty, and M.~L{\'e}cureux-Mercier.
\newblock Control of the continuity equation with a non local flow.
\newblock {\em ESAIM Control Optim. Calc. Var.}, 17(2):353--379, 2011.

\bibitem{CHM2011}
R.~M. Colombo, M.~Herty, and M.~Mercier.
\newblock Control of the continuity equation with a non local flow.
\newblock {\em ESAIM Control Optim. Calc. Var.}, 17(2):353--379, 2011.

\bibitem{CL2011}
R.~M. Colombo and M.~L{\'e}cureux-Mercier.
\newblock Nonlocal crowd dynamics models for several populations.
\newblock {\em Acta Math. Sci. Ser. B}, 32(1):177--196, 2011.

\bibitem{CM2015}
R.~M. Colombo and F.~Marcellini.
\newblock Nonlocal systems of balance laws in several space dimensions with
  applications to laser technology.
\newblock {\em J. Differential Equations}, 259(11):6749--6773, 2015.

\bibitem{CAL2020}
J.-M. Coron, A.~Keimer, and L.~Pflug.
\newblock Nonlocal transport equations-existence and uniqueness of solutions
  and relation to the corresponding conservation laws.
\newblock {\em SIAM J. Math. Anal.}, 52(6):5500--5532, 2020.

\bibitem{FGKP2022}
J.~Friedrich, S.~G{\"o}ttlich, A.~Keimer, and L.~Pflug.
\newblock Conservation laws with nonlocal velocity--the singular limit problem.
\newblock {\em arXiv:2210.12141}, 2022.

\bibitem{FGKP2023}
J.~Friedrich, S.~G{\"o}ttlich, A.~Keimer, and L.~Pflug.
\newblock Conservation laws with nonlocality in density and velocity and their
  applicability in traffic flow modelling.
\newblock {\em arXiv:2302.12797}, 2023.

\bibitem{FKG2018}
J.~Friedrich, O.~Kolb, and S.~G{\"o}ttlich.
\newblock A {G}odunov type scheme for a class of {LWR} traffic flow models with
  non-local flux.
\newblock {\em Netw. Heterog. Media}, 13(4):531--547, 2018.

\bibitem{GNPT2007}
M.~Garavello, R.~Natalini, B.~Piccoli, and A.~Terracina.
\newblock Conservation laws with discontinuous flux.
\newblock {\em Netw. Heterog. Media}, 2(1):159, 2007.

\bibitem{GHO2015}
S.~S. Ghoshal.
\newblock Optimal results on {TV} bounds for scalar conservation laws with
  discontinuous flux.
\newblock {\em J. Differential Equations}, 258(3):980--1014, 2015.

\bibitem{GJT2020}
S.~S. Ghoshal, A.~Jana, and J.~D. Towers.
\newblock Convergence of a {G}odunov scheme to an {A}udusse--{P}erthame adapted
  entropy solution for conservation laws with {BV} spatial flux.
\newblock {\em Numer. Math.}, 146(3):629--659, 2020.

\bibitem{GTV2020}
S.~S. Ghoshal, J.~D. Towers, and G.~Vaidya.
\newblock Well-posedness for conservation laws with spatial heterogeneities and
  a study of {BV} regularity.
\newblock {\em arXiv:2010.13695}, 2020.

\bibitem{GTV2022a}
S.~S. Ghoshal, J.~D. Towers, and G.~Vaidya.
\newblock Convergence of a {G}odunov scheme for degenerate conservation laws
  with {BV} spatial flux and a study of {P}anov-type fluxes.
\newblock {\em J. Hyperbolic Differ. Equ.}, 19(02):365--390, 2022.

\bibitem{GTV2022}
S.~S. Ghoshal, J.~D. Towers, and G.~Vaidya.
\newblock A {G}odunov type scheme and error estimates for scalar conservation
  laws with {P}anov-type discontinuous flux.
\newblock {\em Numer. Math.}, 151:601--625, 2022.

\bibitem{GR2019}
P.~Goatin and E.~Rossi.
\newblock Well-posedness of {IBVP} for {1D} scalar non-local conservation laws.
\newblock {\em Z. Angew. Math. Mech.}, 99(11):e201800318, 2019.

\bibitem{GHS+2014}
S.~G{\"o}ttlich, S.~Hoher, P.~Schindler, V.~Schleper, and A.~Verl.
\newblock Modeling, simulation and validation of material flow on conveyor
  belts.
\newblock {\em Appl. Math. Model.}, 38(13):3295--3313, 2014.

\bibitem{HR2015}
H.~Holden and N.~H. Risebro.
\newblock {\em Front {T}racking for {H}yperbolic {C}onservation {L}aws}.
\newblock Springer, second edition, 2015.

\bibitem{KR2003}
K.~H. Karlsen and N.~H. Risebro.
\newblock On the uniqueness and stability of entropy solutions of nonlinear
  degenerate parabolic equations with rough coefficients.
\newblock {\em Discrete Contin. Dyn. Syst.}, 9(5):1081, 2003.

\bibitem{KP2017}
A.~Keimer and L.~Pflug.
\newblock Existence, uniqueness and regularity results on nonlocal balance
  laws.
\newblock {\em J. Differential Equations}, 263(7):4023--4069, 2017.

\bibitem{KP2019}
A.~Keimer and L.~Pflug.
\newblock On approximation of local conservation laws by nonlocal conservation
  laws.
\newblock {\em J. Math. Anal. Appl.}, 475(2):1927--1955, 2019.

\bibitem{KP2021}
A.~Keimer and L.~Pflug.
\newblock Discontinuous nonlocal conservation laws and related discontinuous
  {ODE}s---existence, uniqueness, stability and regularity.
\newblock {\em 2110.10503}, 2021.

\bibitem{KL2022}
A.~Keimer and L.~Pflug.
\newblock On the singular limit problem for a discontinuous nonlocal
  conservation law.
\newblock {\em arXiv:2212.12598}, 2022.

\bibitem{KP2022}
A.~Keimer and L.~Pflug.
\newblock On the singular limit problem for a discontinuous nonlocal
  conservation law.
\newblock {\em arXiv:2212.12598}, 2022.

\bibitem{KPS2018}
A.~Keimer, L.~Pflug, and M.~Spinola.
\newblock Nonlocal scalar conservation laws on bounded domains and applications
  in traffic flow.
\newblock {\em SIAM J. Math. Anal.}, 50(6):6271--6306, 2018.

\bibitem{LEE2019}
Y.~Lee.
\newblock Thresholds for shock formation in traffic flow models with
  nonlocal-concave-convex flux.
\newblock {\em J. Differ. Equations}, 266(1):580--599, 2019.

\bibitem{PAN2009}
E.~Y. Panov.
\newblock On existence and uniqueness of entropy solutions to the {C}auchy
  problem for a conservation law with discontinuous flux.
\newblock {\em J. Hyperbolic Differ. Equ.}, 6(03):525--548, 2009.

\bibitem{PER2007}
B.~Perthame.
\newblock {\em Transport {E}quations in {B}iology}.
\newblock Frontiers in Mathematics. Birkh\"auser, 2007.

\bibitem{TOW2020}
J.~D. Towers.
\newblock An existence result for conservation laws having {BV} spatial flux
  heterogeneities --- without concavity.
\newblock {\em J. Differential Equations}, 269(7):5754--5764, 2020.

\end{thebibliography}
\end{document}